\documentclass[11pt]{amsart}
\usepackage[margin=1in]{geometry}
\usepackage[multiple]{footmisc}
\usepackage[
colorlinks=true,
linkcolor=blue,
citecolor=blue,
urlcolor=blue]{hyperref}
\usepackage{enumitem}
\usepackage{cleveref}

\DeclareFontShape{OMX}{cmex}{m}{n}{<->cmex10}{}

\DeclareFontFamily{OMX}{MnSymbolE}{}

\DeclareFontShape{OMX}{MnSymbolE}{m}{n}{
    <-6>    MnSymbolE5
    <6-7>   MnSymbolE6
    <7-8>   MnSymbolE7
    <8-9>   MnSymbolE8
    <9-10>  MnSymbolE9
    <10-12> MnSymbolE10
    <12->   MnSymbolE12
}{}

\DeclareSymbolFont{MnLargeSymbols}
                  {OMX}{MnSymbolE}{m}{n}

\DeclareMathDelimiter{\llangle}{\mathopen}
    {MnLargeSymbols}{'164}
    {MnLargeSymbols}{'164}

\DeclareMathDelimiter{\rrangle}{\mathclose}
    {MnLargeSymbols}{'171}
    {MnLargeSymbols}{'171}

\newcommand{\R}{\mathbf R}
\newcommand{\sphere}{\mathbf{S}^{n-1}}
\newcommand{\E}{\mathbf E}
\newcommand{\Prob}{\mathbf P}

\newcommand{\rad}{\operatorname{rad}_2}
\newcommand{\DualDvoretzky}{k_\ast}
\newcommand{\Cov}{\operatorname{Cov}}
\newcommand{\Var}{\operatorname{Var}}
\newcommand{\Lip}{\operatorname{Lip}}
\newcommand{\tr}{\operatorname{tr}}
\newcommand{\rank}{\operatorname{rank}}
\let\trace\tr
\newcommand{\SL}{\mathrm{SL}}
\newcommand{\cS}{\mathcal{S}}
\newcommand{\cG}{\mathcal{G}}
\newcommand{\GL}{\mathrm{GL}}
\newcommand{\Gr}{\mathrm{Gr}}

\let\vr\vrad
\newcommand{\1}{\mathbf 1}
\newcommand{\vertiii}[1]{{\left\vert\kern-0.25ex\left\vert\kern-0.25ex\left\vert #1 
\right\vert\kern-0.25ex\right\vert\kern-0.25ex\right\vert}}
\newcommand{\gauge}[1]{\vertiii{#1}}
\newcommand{\opnorm}[1]{\vertiii{#1}_{\rm op}}
\newcommand{\fronorm}[1]{\vertiii{#1}_{\rm F}}
\newcommand{\e}{\mathrm{e}}
\newcommand{\ud}{\mathrm{d}}
\newcommand{\PoinConst}{C_{\rm P}}
\newcommand{\ie}{\emph{i.e.,}}
\newcommand{\eg}{\emph{e.g.,}}
\newcommand{\Bodies}{\mathcal{K}_0^n}
\newcommand{\Bobkov}{\mathrm{B}}
\newcommand{\cL}{\mathcal{L}}
\let\vol\Vol
\newcommand{\floor}[1]{\left\lfloor #1 \right\rfloor}
\newcommand{\ceil}[1]{\left\lceil #1 \right\rceil}

\theoremstyle{plain}
\newtheorem{theorem}{Theorem}[section]
\newtheorem{lemma}[theorem]{Lemma}
\newtheorem{proposition}[theorem]{Proposition}
\newtheorem{corollary}[theorem]{Corollary}

\title[Mean width and metric entropy of convex bodies]
{Optimal mean width and metric entropy \\ estimates for convex bodies}

\author{Grigoris Paouris}
\thanks{GP gratefully acknowledges support from the NSF under grant DMS-2405441 and from AFOSR under grant FA9550-25-1-0284.}
\address{Department of Mathematics, 
Texas A\&M University, \and\newline\hspace*{\parindent}%
Department of Mathematics, Princeton University}
\email{grigoris@tamu.edu}

\author{Reese Pathak}
\thanks{RP gratefully acknowledges support from the NSF under grant DMS-2503579.}
\address{Department of Statistics, UC Berkeley,\and\newline\hspace*{\parindent}%
School of Oper. Res. \& Inf. Engineering (ORIE), Cornell University}
\email{pathakr@berkeley.edu}

\begin{document}

\begin{abstract}
We show that there exists a constant $C > 0$ such that for any $n \geq 1$ and any convex body $K \subset \R^n$,
\[
1 \leq \inf_{T \in \GL(n)} \, \frac{M^\ast(TK)}{\vr(TK)} 
 \leq C\sqrt{\log(\e n)},
\]
where $M^\ast$ denotes the spherical mean width and $\vr(\cdot)$ denotes the volume radius. The righthand side is attained, up to universal constants, by the crosspolytope and the regular $n$-simplex. Analogously, we show that, up to universal constants, the logarithm of the Euclidean covering number is maximized over convex bodies $K \subset \R^n$ by the simplex and crosspolytope. 
Our proof makes use of Eldan's stochastic localization.
\end{abstract}

\maketitle

\section{Introduction}

For compact sets $K,L\subset \R^{n}$, with $L$ having
nonempty interior, the covering number $N(K,L)$ is the smallest integer \(m\) for which there exist
\(x_{1},\ldots,x_{m}\in K\) such that
$
K\subseteq \bigcup_{i=1}^{m}(x_{i}+L).
$
Covering numbers quantify the compactness and optimal discretizations of a set; see \S\ref{sec:covering-numbers}.

Our first result shows that for a convex body $K \subset \R^n$, there always exists an ellipsoid that can be used to cover $K$ with no more copies of the ellipsoid than are required to optimally cover the crosspolytope $B^n_1$ of the same volume.
\begin{theorem}
\label{thm:covering-numbers}
There exists $c_{\ref{thm:covering-numbers}}, C_{\ref{thm:covering-numbers}} \in (0, \infty)$ such that for any $n \geq 1$ and any convex body $K\subset\R^{n}$, there exists $T \in \GL(n)$ with $\vol_n(TK) = \vol_n(B^n_1)$ such that \begin{equation}
\label{THM1-1}
\log N\left(TK,rB_{2}^{n}\right)
\leq
  C_{\ref{thm:covering-numbers}}\log N\left( B_{1}^{n},c_{\ref{thm:covering-numbers}}rB_{2}^{n}\right), \quad \mbox{for all}~r > 0.
\end{equation}
Above, $B_{2}^{n}$ denotes the Euclidean unit ball.
\end{theorem}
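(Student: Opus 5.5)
Here is the plan. First choose a good position for $K$, then bound its covering numbers at each scale $r$ by the matching scale for $B_1^n$.

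\textbf{Normalization.} Write $\vr(K)=(\vol_n(K)/\vol_n(B_2^n))^{1/n}$. Recall $\vr(B_1^n)\asymp n^{-1/2}$ and $M^\ast(B_1^n)\asymp \sqrt{\log n/n}$. The known behaviour of $N(B_1^n,rB_2^n)$, which follows from Maurey's lemma together with volumetric and Sudakov-type bounds, is as follows:
\begin{itemize}
\item it is about $n\log(\e/(r\sqrt n))$ for $r\lesssim n^{-1/2}$;
\item it is about $r^{-2}\log(\e n r^2)$ for $n^{-1/2}\lesssim r\le 1$;
\item it equals $1$ for $r>1$.
\end{itemize}
Pick $T$ attaining the abstract's mean width estimate, $M^\ast(TK)\le C\sqrt{\log(\e n)}\,\vr(TK)$, and rescale so that $\vol_n(TK)=\vol_n(B_1^n)$. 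Then $M^\ast(TK)\lesssim \sqrt{\log n/n}\asymp M^\ast(B_1^n)$. I expect this mean width estimate to be proved with stochastic localization, for instance by putting $TK$ in a position where the uniform measure, or a suitable tilt of it, has controlled covariance so that its Gaussian width is small. Establishing it is the main obstacle. Here I take it as the driving input, but note below that more than mean width is needed.

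\textbf{Large scales} ($r\ge c n^{-1/2}$). Sudakov's inequality gives $\log N(TK,rB_2^n)\lesssim n\,M^\ast(TK)^2/r^2\lesssim \log n/r^2$. This matches $r^{-2}\log(\e nr^2)$ only when $r\le n^{-1/2+\epsilon}$. For larger $r$ we lose a $\log n$ factor, because Sudakov is not sharp for $B_1^n$.

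To recover the sharp bound I will use a dual Sudakov / Maurey-type argument adapted to the position. Suppose $TK$ sits in a position where it is contained in the convex hull of polynomially many points of Euclidean norm $\lesssim 1$, or more generally where its support function has subgaussian tails with parameter $n^{-1/2}$ in most directions. Then Maurey's empirical method gives $\log N\lesssim r^{-2}\log(\e nr^2)$ exactly as for $B_1^n$. So I would strengthen the position: the localization should give a $T$ with $\|h_{TK}(\theta)\|_{\psi_2(\sigma)}\lesssim \sqrt{\log n/n}$, and with the entropy of $TK$ controlled by a Dudley chaining in which only a $\log(\e nr^2)$ factor appears. I expect this refinement to be the hardest step. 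A fallback is to adjust constants using $c_{\ref{thm:covering-numbers}}$: absorb the logarithmic loss by comparing to $B_1^n$ at scale $cr$ with $c$ small, which works when $r$ is above $n^{-1/2+\epsilon}$ but not uniformly. Hence the subgaussian-position approach is the real route.

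\textbf{Small scales} ($r\le cn^{-1/2}$). Use the standard volumetric bound $N(TK,rB_2^n)\le \vol_n(TK+\frac r2B_2^n)/\vol_n(\frac r2 B_2^n)$. Bound the numerator by $\vol_n(TK+\frac r2 B_2^n)\le N(TK,\rho B_2^n)\,\vol_n((\rho+r/2)B_2^n)$ at $\rho\asymp n^{-1/2}$, where the previous step gives $\log N(TK,\rho B_2^n)\lesssim n$. Alternatively, use the mixed-volume/Steiner expansion together with Urysohn, $V_j(TK)\le (M^\ast)^j\vol(B_2^n)$ in normalized form. Either way the result is $\log N(TK,rB_2^n)\lesssim n\log(\e/(r\sqrt n))$, which matches $B_1^n$.

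For $r$ above the circumradius scale, $B_1^n$ has covering number $1$, so I need $TK\subset C'B_2^n$ in the chosen position. If that fails, I can intersect with a ball: truncation changes neither the volume nor the mean width much, and this step also comes out of the localization argument. Combining all scales gives \eqref{THM1-1}.
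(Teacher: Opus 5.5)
There is a genuine gap. The only property of the position that you actually use is $M^\ast(TK)\le C\sqrt{\log(\e n)}\,\mathrm{vr}(TK)$. Even granting \Cref{thm:reverse-urysohn}, this cannot give \eqref{THM1-1}, because some bodies already in such a position violate \eqref{THM1-1}.

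\emph{Scales $r\simeq n^{-1/2}$.} Take $K=aB_2^{n/2}\times bB_2^{n/2}$ with $a=\sqrt{\log n/n}$ and $b=(n\log n)^{-1/2}$. Then $\mathrm{vr}(K)\simeq n^{-1/2}$ and $h_K\le a+b\le 2\sqrt{\log n/n}$ on the whole sphere. So both the mean width bound and your proposed $\psi_2$ condition hold. (The $\psi_2$ condition is in fact automatic for symmetric bodies, by L\'evy's inequality and $\rad(K)\le C\sqrt n\,M^\ast(K)$.) Yet projecting onto the first factor and comparing volumes gives $\log N(K,n^{-1/2}B_2^n)\ge\frac n4\log\log n$, whereas $\log N(B_1^n,c\,n^{-1/2}B_2^n)\le C_c\,n$ by \eqref{Carsten}. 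The convex-hull route does not rescue this range either: Maurey's lemma with $N=n^{C}$ points gives only $r^{-2}\log(\e Nr^2)\simeq n\log n$ at $r=n^{-1/2}$.

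This is also where Sudakov actually loses. The ratio of $\log n/r^2$ to $r^{-2}\log(\e nr^2)$ is at most $1/(2\epsilon)$ once $r\ge n^{-1/2+\epsilon}$, and is of order $\log n$ at $r\simeq n^{-1/2}$. That is the opposite of what you wrote. Consequently your small-scale step has no input: it needs $\log N(TK,\rho B_2^n)\le Cn$ at $\rho\simeq n^{-1/2}$, i.e.\ that $TK$ is already in M-position. Your Steiner/Urysohn variant uses only $W_{[j]}(TK)\le M^\ast(TK)$, so it gives $n\log(1+C\sqrt{\log n}/(r\sqrt n))$ and loses a $\log\log n$ factor there.

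\emph{Scales $r\ge 1/c$.} Here the right side of \eqref{THM1-1} vanishes, so you need $\mathrm{diam}(TK)\le C$. The mean width bound only gives $\rad(TK)\le C\sqrt{\log n}$. The body $\mathrm{conv}(B_1^n\cup\{\pm\sqrt{\log n}\,e_1\})$ has $\mathrm{vr}\simeq n^{-1/2}$ and $M^\ast\le C\sqrt{\log n/n}$, yet needs at least two $r$-balls for every $r<\sqrt{\log n}$. Intersecting with a ball is not available, since the statement concerns $TK$ itself.

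What is missing is a position giving scale-dependent control of localized widths, $w(TK\cap rB_2^n)\le C(\sqrt{\log(\e n/m)}+r\sqrt m)$ for every $m\le n$, together with $\rad\le C$. The paper gets both from one choice. With $K'=\frac12(K-K)$, take $T$ so that $(TK')^\circ$ has unit volume and lies in Bobkov's maximal Gaussian measure position.
\begin{itemize}
\item The conditioned Gaussian then has scalar covariance $\alpha^2 I_n$ with $\alpha^2\ge c$, which gives $\rad(TK')\le C$ (\Cref{lem:bounded-Lipschitz-constant}).
\item Stochastic localization started from this measure keeps every Poincar\'e constant bounded. This controls the third-order tensors and shows that $\lambda_{n-m+1}(A_t)$ stays above $1/2$ up to time of order $1/\log(\e n/m)$, except with probability $m/n$. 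That yields \Cref{thm:comparison-theorem}, hence \Cref{lem:interpolation-body-bounds}.
\item These widths go into the local Sudakov bound $H_K(r/2)-H_K(r)\le C\,w(K\cap rB_2^n)^2/r^2$ (\Cref{lem:one-step-bound}). Telescoping with $m_j=\ceil{p/4^j}$ gives $H_K(\sqrt{\log(\e n/p)/p})\le Cp$ for all $p\le n$. This is Sch\"utt's profile at every scale; scales below $n^{-1/2}$ follow from the case $p=n$ plus a volumetric step.
\item Rogers--Shephard and the (reverse) Santal\'o inequalities transfer the bound from $TK'$ to $TK$.
\end{itemize}
Your outline defers this entire mechanism to the ``refinement'' you flag as hardest. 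The paper also notes that even the minimal mean width position can fail to be an M-position. As it stands, the proposal does not prove the theorem.
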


A classical result of Sch\"{u}tt \cite{Sch84}
establishes the Euclidean metric entropy of the 
crosspolytope%
\footnote{We refer to the logarithm of the covering number $\log N(K, rB^n_2)$ as the (Euclidean) \emph{metric entropy} of $K$.}%
\textsuperscript{,}%
\footnote{The notation $a \simeq b$ means that there exist universal constants
$c,C>0$ such that
$
c\,a\leq b\leq C\,a.
$
We reserve $c,C,C_{1}, c', C' >0$ for universal constants.}
\begin{equation}
\label{Carsten}
\log N\bigl(B_{1}^{n},rB_{2}^{n}\bigr)
\simeq 
\begin{cases}
n\log\left(1+\frac{1}{r\sqrt{n}}\right),
& 0<r\leq n^{-1/2}, \\
\frac{1}{r^{2}}\log\left(1+nr^{2}\right),
& n^{-1/2}<r\leq c
\end{cases}.
\end{equation}
The same estimates hold, with adjusted universal constants, when the crosspolytope $B_{1}^{n}$ is replaced by the regular $n$-simplex $S_{n}$.

A fundamental parameter associated with a convex body
is its spherical mean width, 
\[
M^\ast(K) = \E_{\theta \sim \sigma_{n-1}} \Big[ \sup_{x \in K} \, \langle x, \theta \rangle \Big]
= \E_{\sigma_{n-1}} \, h_K(\theta).
\]
Above, $\sigma_{n-1}$ denotes the uniform measure on the sphere $\sphere$. The spherical mean width (or its Gaussian counterpart, the Gaussian width) appears naturally in analysis, probability, and geometry; see~\S\ref{sec:mean-width} for additional remarks. A fundamental question is the maximal order of $M^\ast(K)$ over convex bodies $K\subset \R^n$ with fixed volume.

By Urysohn's inequality (see \cite {Sch14}), it is known that
$M^\ast(K) \geq \vr(K)$, with equality for $K = B^n_2$, where $\vr(K) = (\tfrac{\vol_n(K)}{\vol_n(B^n_2)})^{1/n}$ denotes the volume radius of a convex body. 
To obtain upper bounds, one must first take a suitable linear image $TK$ (also referred to as a \emph{position} of $K$), where $T \in \GL(n)$. The question is then to determine how large the \emph{minimal} mean width,
\[
\inf_{T\in \GL(n)} \frac{ M^\ast(TK) }{ \vr(TK)},
\]
can be over all convex bodies $K \subset \R^n$.
Despite considerable interest, the order of this minimal mean width remained unknown.
Our second result resolves this question: the maximum, up to universal constants, is attained by the $n$-simplex and the crosspolytope.

\begin{theorem}
\label{thm:reverse-urysohn}
There exists $C_{\ref{thm:reverse-urysohn}} \in (0, \infty)$ such that for any $n \geq 1$ and any convex body $K\subset\R^{n}$,
\begin{equation}
\label{THM2-1}
1\leq \inf_{T \in \GL(n)} \, 
\frac{M^\ast(TK)}{{\rm vr}(TK)}
\leq C_{\ref{thm:reverse-urysohn}} \, \frac{M^\ast (B_1^n)}{{\rm vr} (B_1^n)}  \simeq
\sqrt{\log(\e n)}.
\end{equation}
\end{theorem}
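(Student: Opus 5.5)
The lower bound in \eqref{THM2-1} is Urysohn's inequality, since $M^\ast(TK)\geq \vr(TK)$ for every $T$, and the equivalence $M^\ast(B^n_1)/\vr(B^n_1)\simeq\sqrt{\log(\e n)}$ is a direct computation. Indeed, $h_{B_1^n}(\theta)=\|\theta\|_\infty$ gives $M^\ast(B^n_1)\simeq\sqrt{\log(\e n)/n}$, while $\vr(B^n_1)\simeq n^{-1/2}$. The content is therefore the upper bound: I must produce, for each $K$, a position $TK$ with $\vol_n(TK)=\vol_n(B^n_1)$ and $M^\ast(TK)\leq C\sqrt{\log(\e n)/n}$.

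\textbf{Step 1 (reduction to entropy).} I take the position $T$ given by Theorem~\ref{thm:covering-numbers}. It controls $\log N(TK,rB^n_2)$ at every scale by the crosspolytope profile \eqref{Carsten}. I then pass to the Gaussian width $w(TK)=\E\sup_{x\in TK}\langle G,x\rangle\simeq\sqrt n\,M^\ast(TK)$ and try to bound it by a chaining argument. Plain Dudley, $w(TK)\lesssim\int_0^\infty\sqrt{\log N(TK,rB^n_2)}\,\ud r$, is too lossy. Inserting \eqref{Carsten}, the range $n^{-1/2}<r<c$ contributes $\int r^{-1}\sqrt{\log(nr^2)}\,\ud r\simeq(\log n)^{3/2}$, whereas the target for $w$ is $\sqrt{\log n}$. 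Even a dyadic-scale Sudakov-type reversal, giving $w\lesssim\log n\cdot\sup_r r\sqrt{\log N}$, loses a factor $\sqrt{\log n}$. So entropy alone is not enough, and I would not follow this route.

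\textbf{Step 2 (direct argument via stochastic localization).} I would run the argument at the level of the width itself, using the structure that presumably underlies Theorem~\ref{thm:covering-numbers}. Put $K$ in a position where the uniform measure $\mu$ on $K$ is isotropic, up to the normalization of volume. Then run Eldan's stochastic localization up to a time $t\simeq 1$. This writes $\mu$ as a mixture $\mu=\E\,\mu_t$, where each $\mu_t$ has density proportional to $\1_K(x)\e^{\langle c_t,x\rangle-t|x|^2/2}$. Each such measure is $t$-uniformly log-concave, hence satisfies Gaussian concentration with constant $t^{-1/2}$. Meanwhile, the covariance $A_t$ of $\mu_t$ stays bounded in operator norm with good probability, by the now-available bounds on the covariance process.

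\textbf{Step 3 (width of the body).} To bound $h_{TK}$ averaged over $\theta$, I would compare $\sup_{x\in K}\langle x,\theta\rangle$ with a high quantile of $\langle X,\theta\rangle$ under a suitably chosen localized piece. For a log-concave measure on $K$ with $\psi_2$-type tails in every direction, the support function is controlled by about $\sqrt n$ times the $\psi_2$ norm in the worst case. To gain, I would choose $T$ so that the barycenters $c_t$ of the pieces spread the body, making $TK$ comparable to the convex hull of about $e^{n}$ points. Each of these points has Euclidean norm of order one and is sub-Gaussian in direction $\theta$. The maximum of $\langle G,\cdot\rangle$ over such a family should then be of order $\sqrt{\log(\text{number of effective directions})}\lesssim\sqrt{\log n}$, after normalization by the volume radius.

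The main obstacle is Step 3. There I must show that the supremum over $K$ is governed by only about $\operatorname{poly}(n)$ effective extreme directions, as for $B^n_1$, rather than about $e^n$ of them; the latter would only recover the trivial bound. My first attempt would be to pick $T$ that minimizes $\E|X|^2$ under a constraint derived from the localization, so that the variational first-order condition forces $K$ to be close, in the sense of support functions, to the convex hull of its "heavy" directions.
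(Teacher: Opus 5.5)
\textbf{There is a genuine gap: the upper bound is not proved.}

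Your lower bound (Urysohn) is correct. So is the computation $M^\ast(B^n_1)/{\rm vr}(B^n_1)\simeq\sqrt{\log(\e n)}$, and so is your observation that \Cref{thm:covering-numbers} combined with Dudley's integral loses a power of $\log n$. Step~3 is, as you say yourself, the whole difficulty. What you write there restates the target (a convex hull of $n^{C}$ points of norm $O(1)$ has Gaussian width $O(\sqrt{\log n})$) and adds an unspecified variational choice of $T$. Nothing in Steps~2--3 produces such a hull.

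Step~2 also points in the wrong direction, for two reasons.
\begin{enumerate}
\item A measure supported on $K$ directly yields only \emph{lower} bounds on $h_K$, since $\langle X,\theta\rangle\leq h_K(\theta)$ for every $X\in K$. Turning its tails into an upper bound on $h_K$ requires reverse inequalities, and that is where your factor $\sqrt n$ comes from.
\item The covariance control you invoke, namely $\opnorm{A_t}=O(1)$ up to time of order one for the localization of the isotropic uniform measure on $K$, is not an available estimate. It would already give a bounded Poincar\'e constant for that measure, i.e.\ the KLS conjecture for $K$. This is the same obstruction that produces the factor $\psi_n$ in~\eqref{ineq:comparison-via-KLS}.
\end{enumerate}

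\textbf{The missing idea is to dualize and to change the measure.} Since $h_K\leq 1$ on $K^\circ$, it suffices to prove a comparison $\E h_K(G)\leq C\big(\sqrt{\log(\e n)}\,\E h_K(X)+\Lip(h_K)\big)$ for some random vector $X$ supported in $K^\circ$. The paper's argument for symmetric $K$ runs as follows.
\begin{enumerate}
\item Put $K^\circ$ in Bobkov's maximal Gaussian measure position with unit volume and take $X\sim\gamma_{K^\circ}$. By \Cref{prop:bobkov}, $\Cov(X)=\alpha^2 I_n$ with $\alpha^2\geq c$. This also gives $\Lip(h_K)=\rad(K)\leq C$ (\Cref{lem:bounded-Lipschitz-constant}).
\item After rescaling $X$ to be isotropic, every localized measure $\mu_t$ is again a conditioned Gaussian. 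Hence $\PoinConst(\mu_t)\leq 1/(\alpha^2+t)$ with no KLS input.
\item Through \Cref{lem:mean-zero-log-ccv}, this bounds the drift of a soft-min potential of $A_t$. It keeps $\lambda_n(A_t)\geq 1/2$ up to time $T\simeq 1/\log(\e n)$ with probability at least $1-1/n$.
\item The truncated martingale $v_t=\int_0^t\Sigma_s\,\ud B_s$ dominates $\tfrac12 B_T$ in convex order (\Cref{lem:comparison-result}). Because the bad event is so rare, $v_T$ stays within $O(\sqrt T)$ of $a_T=\int_0^T A_s\,\ud B_s$ in $L^2$. Jensen then gives $\E h_K(a_T)\leq\E h_K(X)\leq 1$.
\item The outcome is $\sqrt n\,M^\ast(K)\leq C\big(\sqrt{\log(\e n)}+\rad(K)\big)$, which is \Cref{thm:main}.
\end{enumerate}
A general $K$ is handled through $K'=\tfrac12(K-K)$. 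One has $M^\ast(TK)=M^\ast(TK')$, and Rogers--Shephard together with reverse Santal\'o give ${\rm vr}(TK)\geq c\,n^{-1/2}$.
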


The ``Urysohn ratio'' appearing in~\Cref{thm:reverse-urysohn} is the first---and, by the Alexandrov
inequalities, the largest---member of a family of fundamental
geometric parameters associated with a convex body \(K\), namely its
normalized intrinsic volumes. Following the normalization arising from Kubota's
formula~\cite{Sch14}, we set, for an integer $k \in \{1, 2, \dots, n-1\}$,
\begin{equation}
\label{def-W-k}
W_{[k]}(K)
:=
\left(
\int_{\Gr_{n,k}}
\frac{\vol_k(P_F K)}{\vol_k(B_2^k)}
d\nu_{n,k}(F)
\right)^{\frac{1}{k}}.
\end{equation}
We also set $W_{[n]}(K) = \vr(K)$. 
Above, $\Gr_{n,k}$ denotes the Grassmann manifold of $k$-dimensional
subspaces of $\R^n$, and $\nu_{n,k}$ its Haar probability
measure. We denote by $P_F$ the orthogonal projection onto a subspace $F \subset \R^n$. 
Note that by the Alexandrov inequalities, for any convex body $K \subset \R^n$,
\begin{equation}
W_{[1]}(K)\geq W_{[2]}(K)\geq\cdots
\geq W_{[n]}(K).
\end{equation}

Our third result states that there exists a linear transformation such that \emph{all} these quantities are \emph{simultaneously} controlled by
the corresponding ratios for the crosspolytope (or simplex).

\begin{theorem}
\label{thm:quermassintegrals}
There exists $C_{\ref{thm:quermassintegrals}} \in (0, \infty)$ such that for any $n \geq 1$ and every convex body $K \subset \R^n$, there exists $T \in \GL(n)$ such that for $1 \leq k \leq n$,
\begin{equation}
\label{THM3-1}
1 \leq \frac{W_{[k]}(TK)}{{\rm vr}(TK)} \leq C_{\ref{thm:quermassintegrals}} \,
\frac{W_{[k]}(B_1^n)}{ {\rm vr} (B_1^n)} \simeq  \sqrt{\log\left(\frac{\e n}{k}\right)}.
\end{equation}
\end{theorem}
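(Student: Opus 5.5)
We deduce \Cref{thm:quermassintegrals} from \Cref{thm:covering-numbers}, applied with the position $T$ supplied there. This choice of $T$ does not depend on $k$, which is what gives the simultaneous control over all $1\le k\le n$. Normalize so that $\vol_n(TK)=\vol_n(B_1^n)$, and hence $\vr(TK)=\vr(B_1^n)\simeq n^{-1/2}$. The lower bound $W_{[k]}(TK)\ge \vr(TK)$ is just the Alexandrov chain, so only the upper bound needs work.

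The link between $W_{[k]}$ and covering numbers goes through projections. For $F\in\Gr_{n,k}$ and any $r>0$,
\[
\vol_k(P_F TK)\le N(P_FTK, rB_F)\, r^k\vol_k(B_2^k)\le N(TK,rB_2^n)\, r^k \vol_k(B_2^k),
\]
because a covering of $TK$ by Euclidean balls projects to a covering of $P_FTK$ by balls of the same radius. Averaging over $\Gr_{n,k}$ and taking $k$-th roots gives
\[
W_{[k]}(TK)\le r\,N(TK,rB_2^n)^{1/k}\le r\exp\Bigl(\tfrac{C}{k}\log N(B_1^n,crB_2^n)\Bigr),
\]
using \eqref{THM1-1}. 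We then optimize over $r$ with Sch\"utt's estimate \eqref{Carsten}. Take $r=A\sqrt{\log(\e n/k)}/\sqrt{k}$, with $A$ a large universal constant. Then $c r\ge n^{-1/2}$, so we are in the second regime, and
\[
\log N(B_1^n,crB_2^n)\lesssim \frac{k}{A^2\log(\e n/k)}\log\Bigl(1+\frac{A^2 n\log(\e n/k)}{k}\Bigr)\lesssim \frac{k}{A^2}\cdot O(1),
\]
since $\log(1+A^2 x\log(\e x))\lesssim_A \log(\e x)$ for $x=n/k\ge1$. Hence the exponential factor is $O(1)$, and $W_{[k]}(TK)\lesssim \sqrt{\log(\e n/k)}/\sqrt{k}$.

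This bound is off by a factor $\sqrt{n/k}$ from the target $\vr\cdot\sqrt{\log(\e n/k)}\simeq \sqrt{\log(\e n/k)/n}$. The loss has two sources: projections only count $k$ dimensions of the covering, and a ball-covering bound sees the wrong scale. This mismatch is the main obstacle. To fix it I would not use $N(TK,rB_2^n)$ directly. Instead I would bound the covering of the projections $N(P_FTK, rB_F)$, and do so \emph{on average} over $F$, using a Sudakov/dual-Sudakov type argument. A cleaner route is to use the stronger mean-width information behind \Cref{thm:reverse-urysohn}, applied inside the proof of \Cref{thm:covering-numbers}. That argument presumably produces a position in which $TK$ has an $\ell$-norm/mean-width profile comparable to that of $B_1^n$, for instance through controlled Gaussian widths of the sections $TK\cap tB_2^n$ obtained via stochastic localization.

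With that in hand, I would use Kubota's formula and the bound
\[
W_{[k]}(L)\lesssim \sup_{t}\, \min\Bigl(t,\ \sqrt{n/k}\, M^\ast(L\cap tB_2^n)\Bigr),
\]
which follows from Urysohn's inequality on random $k$-dimensional projections combined with a truncation at radius $t$. The part of $P_FL$ outside $tP_FB_2^n$ is controlled by covering $L$ with $N(L,tB_2^n)$ translates of the truncated pieces. For $B_1^n$-like profiles, where $M^\ast(L\cap tB_2^n)\approx \sqrt{\log(\e n t^2)/n}$, choosing $t\simeq \sqrt{\log(\e n/k)/n}\cdot\sqrt{n/k}$ and balancing against the entropy term $\tfrac1k\log N(L,tB_2^n)\lesssim 1$ should give $W_{[k]}\lesssim \sqrt{\log(\e n/k)/n}$, matching the computation for $B_1^n$. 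The final equivalence $W_{[k]}(B_1^n)/\vr(B_1^n)\simeq\sqrt{\log(\e n/k)}$ is a direct computation: the volumes of projections of $B_1^n$ equal the volumes of random $k$-dimensional polytopes with $2n$ vertices.
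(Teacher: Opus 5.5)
Your diagnosis of the first attempt is right: projecting a ball covering of $TK$ loses a factor $\sqrt{n/k}$. But the repair does not remove this loss, so the argument has a genuine gap.

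The truncation bound $W_{[k]}(L)\leq C\sup_{t}\min\{t,\sqrt{n/k}\,M^\ast(L\cap tB_2^n)\}$ is unusable as written. With a supremum, taking $t$ large gives $\min\{\cdot\}=\sqrt{n/k}\,M^\ast(L)\geq M^\ast(L)$, so the bound is weaker than the trivial Alexandrov bound $W_{[k]}(L)\leq W_{[1]}(L)=M^\ast(L)$.

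Even read charitably, as an infimum over $t$ with the entropy factor $N(L,tB_2^n)^{1/k}$ in front, the factor $\sqrt{n/k}$ reinstates the loss you were trying to eliminate. It comes from the deterministic comparison $w(P_FA)\leq w(A)$, whereas on average $\E_F\, w(P_FA)\simeq\sqrt{k/n}\,w(A)$. Concretely, take your choice $t=\sqrt{\log(\e n/k)/k}$ and your profile $M^\ast(L\cap tB_2^n)\approx\sqrt{\log(\e nt^2)/n}$. Then $\sqrt{n/k}\,M^\ast(L\cap tB_2^n)\approx\sqrt{\log(\e n/k)/k}\approx t$. This is the same bound as your first attempt, not $\sqrt{\log(\e n/k)/n}$. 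Decreasing $t$ does not help: at scales where $\min\{\cdot\}$ reaches the target, $\frac1k\log N(L,tB_2^n)$ grows like $n/k$ up to logarithms. So the closing claim that the balancing ``should give'' the target is an arithmetic error.

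The missing idea is to bound the truncated piece by Alexandrov's inequality, not by Urysohn on projections.
\begin{itemize}
\item Let $x_1,\dots,x_N\in L$, with $N=N(L,tB_2^n)$, cover $L$. Then $L\subset\bigcup_i\bigl(x_i+(L-L)\cap tB_2^n\bigr)$.
\item Hence $\mathrm{vol}_k(P_FL)\leq N\,\mathrm{vol}_k\bigl(P_F((L-L)\cap tB_2^n)\bigr)$ for every $F\in\Gr_{n,k}$.
\item Averaging over $F$ and using $W_{[k]}\leq W_{[1]}$ gives $W_{[k]}(L)\leq N^{1/k}M^\ast\bigl((L-L)\cap tB_2^n\bigr)$, with no $\sqrt{n/k}$.
\end{itemize}
For $L=TK$ in the paper's position, $L-L=2TK'$ with $(TK')^\circ$ in $\Bobkov$-position. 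Take $t=4\sqrt{\log(\e n/k)/k}$. The entropy factor is $O(1)$ by \eqref{ineq:desired-inequality-on-integers} with $p=k$. \Cref{lem:interpolation-body-bounds} with $m=k$ gives $M^\ast\bigl((L-L)\cap tB_2^n\bigr)\leq C\sqrt{\log(\e n/k)/n}$.

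This section-width estimate is a genuinely separate input. It cannot be extracted from \Cref{thm:covering-numbers} via Dudley's integral: on the $B_1^n$ entropy profile that gives only $w(L\cap tB_2^n)\leq C\log(\e nt^2)^{3/2}$, a loss of a factor $\log(\e n/k)$. Your ``presumably'' step is exactly where the stochastic-localization comparison (\Cref{thm:comparison-theorem}) must enter.

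With these two inputs, the corrected truncation argument is a slightly more direct route than the paper's. The paper feeds the same two estimates into Mourtada's Wills-functional bound (\Cref{lem:bound-on-wills}) and then converts $V_p(K)^{1/p}$ into $W_{[p]}(K)$. Both routes finish with the Rogers--Shephard and reverse Santal\'o bound $\mathrm{vr}(TK)\geq c/\sqrt{n}$.
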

We may also replace the crosspolytope by the $n$-simplex $S_n$ in~\Cref{thm:reverse-urysohn,thm:quermassintegrals}, as 
\[
\frac{W_{[k]}(S_n)}{ {\rm vr}(S_n)}
\simeq \sqrt{\log \Big(\frac{\e n}{k}\Big)}, \quad \mbox{for}~1 \leq k \leq n.
\] 
\subsection{History and additional remarks} 

\subsubsection{Covering numbers}
\label{sec:covering-numbers}
For symmetric convex bodies, covering numbers are precisely the geometric counterpart of entropy numbers of operators between Banach spaces, which are fundamental quantities in operator and approximation theory; see \cite{Pin85, CarSte90} and the references therein. For instance, Sch\"utt's result~\eqref{Carsten} is a direct translation from the original formulation for entropy numbers. 

A breakthrough by V.\ Milman \cite{Mil86} was the realization that by fixing the Euclidean structure in dimension $n$, the entropy numbers for all convex bodies are the same. Milman's M-ellipsoid theorem asserts that for every symmetric convex body $K\subset \R^n$ there exists an ellipsoid $E$ of equal volume, with $\log{N(K, E)} \leq c n$~\cite{Mil86}. This also extends to the non-symmetric case; see \cite{MilPaj00}. \Cref{thm:covering-numbers}, with $r= \vr(B_{1}^{n})$, corresponds to V.\ Milman's relation: \ie~the position used in our result is an M-position (in the sense of V.\ Milman).

Milman's position is a central tool in the local theory of Banach spaces and in asymptotic convex geometry. Among its principal consequences are Milman's reverse Brunn--Minkowski inequality and the Bourgain--Milman ``reverse Santal\'o'' inequality \cite{BouMil87}. Pisier considerably strengthened Milman's theorem by proving the existence of regular ellipsoids. More precisely, for every $\alpha \in (0, 2)$ and every symmetric convex body $K$, there is an ellipsoid $E_\alpha$ such that
\begin{equation}
\label{Pisier}
 \max\left\{ \log{N (K, t E_{\alpha}) },\log{ N( K^{\circ}, t E_{\alpha}^\circ )}\right\} \leq C_{\alpha} \frac{n}{ t^{\alpha}} , \ t >0.
 \end{equation}
See \cite[Chapter 7, Theorem 7.13]{Pis89}. This is known as Pisier's $\alpha$-regular M-position. In the above setting, as $\alpha$ tends to $2$, $C_{\alpha}$ tends to infinity. Thus, \Cref{thm:covering-numbers} introduces the necessary logarithmic correction in $t$ when only one of the two covering numbers is considered. The non-symmetric analogue of \eqref{Pisier} is still an open problem~\cite{Vri24,BizKla26}.

 Artstein, Milman, and Szarek settled the Hilbertian case of Pietsch's long-standing duality of entropy conjecture \cite[Theorem 1]{ArtMilSza04}. Their result implies that when $K$ is symmetric and $E$ is an ellipsoid, the covering numbers satisfy $\log{N(K,E)} \simeq \log{N(E^{\circ},cK^{\circ})}$. Hence,~\Cref{thm:covering-numbers} has an immediate dual counterpart.
 
 Finally, we mention K.\ Ball's observation~\cite{PHDBall} that, assuming the hyperplane conjecture, the isotropic position of a convex body is an M-position. The recent breakthrough paper of Bizeul--Klartag~\cite{BizKla26} confirms that isotropic position is, up to polylogarithmic factors, a $2$-regular Pisier M-position.
 
 \subsubsection{Mean width of convex bodies}
 \label{sec:mean-width}

 The mean width of a convex body is a fundamental geometric parameter with applications in asymptotic convex geometry, the local theory of Banach spaces, the geometry of numbers, high-dimensional probability, random matrix theory, empirical-process theory, compressed sensing, high-dimensional statistics, and convex inverse problems; see, for example, \cite{AubSza17, Ver18, Reg22}.
 
 In probability theory, it frequently appears in its Gaussian normalization: the Gaussian mean width of a bounded set $K$ is $w(K) =\E\sup_{x\in K}\langle G,x\rangle
= \E h_K(G)$. Note that $\sqrt{n} \, M^\ast(K) \simeq w(K)$.
Hence, these parameters are equivalent to the supremum of a canonical Gaussian process. Beginning with the work of Dudley, Sudakov, and Fernique, a close connection between expected suprema of processes and the metric properties (in particular, the metric entropy) of the index set was established. This culminated in Talagrand's majorizing measures theorem \cite{Tal87,Tal05}: the Gaussian width is entirely determined, up to universal constants, by the metric structure of the process.

A parallel development took place in the local theory of Banach spaces, initiated primarily by Maurey and Pisier. For a Banach space $X$, its $K$-convexity constant is the norm $K(X)=\|{\rm {Rad}}_{X}\|$ of the Rademacher projection (see the survey \cite{Mau03}). Pisier's fundamental theorem (see \cite{Pis82}) states that bounded $K$-convexity is equivalent to the absence of uniformly embedded copies of $\ell _1^m$. Figiel and Tomczak--Jaegermann introduced the $\ell$-norm and established its fundamental connection with mean width and $K$-convexity \cite{FigTomJae79}. Their result implies that, for every symmetric convex body $K\subset\R^n$, there exists $T\in \GL(n)$ such that
\begin{equation}
\label{FT}
M(TK) \, M^\ast(TK)\leq CK(X_K),
\end{equation}
where $M(K)=M^\ast(K^{\circ})$ and $X_{K}$ is the Banach space whose unit ball is $K$. Pisier subsequently proved the quantitative estimate
\[
K(X)\leq C\bigl(1+\log d(X,\ell_2^n)\bigr)\leq C\, \log(\e n)
\]
for every $n$-dimensional normed space $X$ \cite{Pis81}; see also \cite[Chapter~2]{Pis89}. Combining these results gives the classical ``$MM^\ast$''-estimate
\[
\inf_{T\in \GL(n)}\, M(TK)\, M^\ast(TK)\leq C\log(\e n).
\]
This yields a (suboptimal) estimate of order $\log n$ for the reversal of Urysohn's inequality.
Note that it extends easily to the non-symmetric case via the Rogers--Shephard inequality.

For Banach lattices, Pisier obtained the sharper estimate
$K(X)\leq C\sqrt{\log(\e n)}$, which leads to optimal estimates in that case. Other notable cases include zonoids and the polars of unit balls of subspaces of $L_{p}$, for which sharp results were previously known \cite{GiaMilRud00,PaoVal18}.

It has been conjectured that every symmetric convex body satisfies
$$
\inf_{T\in \GL(n)} \, M(TK)M^\ast(TK)\leq C\sqrt{\log(\e n)};$$
see the discussion in \cite[p.~207]{AubSza17}. The authors also conjectured a $\sqrt{\log (\e n)}$ upper bound for the normalized minimal mean width, which we establish in~\Cref{thm:reverse-urysohn}.

Prior to our work, the paper by Bizeul--Klartag~\cite{BizKla26} reduced the minimal mean width conjecture to the KLS conjecture. Their work and ours depend on Eldan's stochastic localization~\cite{Eld12, Eld13} in the form introduced by Lee and Vempala~\cite{LeeVem24}. Moreover, these works make use of the idea initiated by Eldan--Lehec~\cite{EldLeh14} to obtain comparison theorems for log-concave isotropic measures via stochastic localization.

\subsubsection{Quermassintegrals and intrinsic volumes of convex bodies}
\label{sec:quermassintegrals}

Let $K$ be a convex body in $\R^{n}$. The Steiner formula states that
\[
\vol_n(K+tB_2^n)
=\sum_{k=0}^{n} \vol_k(B_{2}^{k}) \, V_{n-k}(K) \, t^{k},
\]
where $V_j(K)$ denotes the $j$th intrinsic volume of $K$. In our notation,
\[
V_{k}(K) = \binom{n}{k} \frac{\vol_n(B_{2}^{n})}{ \vol_{n-k}(B_{2}^{n-k})} W_{[k]}^{k}(K). 
\]
The Alexandrov inequalities imply that among convex bodies of fixed volume, every $V_k$ (or $W_{[k]}$) is minimized by Euclidean balls. The case $k=n-1$ corresponds to the classical isoperimetric inequality for surface area, while $k=1$ corresponds to Urysohn's inequality~\cite[Theorem~7.3.1]{Sch14}.

The reverse problem is to minimize $W_{[k]}(TK)$ over volume-preserving linear (or affine) transformations $T$ and determine the bodies for which this affine minimum is largest. A complete sharp answer is presently known only for $k=n-1$. Ball's reverse isoperimetric theorem shows that the extremal body is the cube in the centrally symmetric class and the simplex in the general class \cite{Ball91}. Ball's argument is based on the geometric Brascamp--Lieb inequality; see also Barthe~\cite{Bar98}, who developed the reverse Brascamp--Lieb inequality and treated its equality cases. For $1\leq k\leq n-2$, no comparable sharp identification of the extremal body is known in full generality. \Cref{thm:quermassintegrals} shows that, up to universal constants, the simplex is a maximizer.

Quermassintegrals are also closely connected with metric entropy. Our normalization is intended to make this connection transparent. In particular, it is not difficult to see that $K$ is in Milman's position if and only if
$
W_{[ n/2]}(K)\simeq {\rm vr}(K)
$; see \cite[Chapter 8]{ArtAviGiaMil15}.
  
\subsubsection{Positions of convex bodies}
\label{sec:positions}
For the reverse isoperimetric inequalities mentioned above, one wants to find the linear transformation that minimizes the particular quantity associated with the convex body of interest. This leads to the notion of ``positions'' of convex bodies. As explained in \cite{GiaMil00}, building on results dating back to F.\ John in 1948, these positions typically lead to ``isotropic'' conditions. In \cite{GiaMil00}, the authors provide conditions for minimizing the $k$th quermassintegral. Notably, we \emph{are not} working with these positions, and in fact, they can differ. The position we use was introduced by Bobkov \cite{Bob11} and was shown to be a Milman position. It is known that~\Cref{thm:covering-numbers,thm:quermassintegrals} do \emph{not} hold in the minimal mean width position. Markessinis--Saroglou--Paouris \cite{MarPaoSar12} provide an example of a convex body that is in the minimal mean width position but is far from Milman's position.

\subsection{Ideas in the proof}

We make use of an idea originating with Eldan--Lehec~\cite{EldLeh14} and later presented in an optimized form by Bizeul--Klartag~\cite{BizKla26}. The idea is to use the stochastic localization process to obtain comparison theorems for isotropic log-concave random vectors.

Let $\psi_n$ denote the KLS constant in dimension $n$. The main result in~\cite{BizKla26} shows that, by working with the uniform measure on the convex body $K^\circ$, one obtains
\begin{equation}
\label{ineq:comparison-via-KLS}
\sqrt{n} \, M^\ast(K) \leq C \, \psi_n \, \sqrt{\log (\e n)}.
\end{equation}
Thus, if the KLS conjecture were resolved (\ie~if $\psi_n = O(1)$), then it would imply~\Cref{thm:reverse-urysohn}.

Thus, we need to avoid the reduction to the KLS conjecture. Our key idea in this direction is to replace the uniform measure on $K^\circ$ with the Gaussian measure conditioned on $K^\circ$. Assuming that this measure has scalar covariance is equivalent to assuming that $K^\circ$ lies in Bobkov's maximal Gaussian measure position~\cite{Bob11}. The measures arising along the stochastic localization process also immediately enjoy bounded Poincar\'{e} constants. By controlling the third-order tensor arising in the analysis of the stochastic localization process in terms of the Poincar\'e constant (\eg~\Cref{lem:mean-zero-log-ccv}), we obtain the sharp reversal of Urysohn's inequality, \Cref{thm:reverse-urysohn}.

To obtain the bounds on the quermassintegrals (\Cref{thm:quermassintegrals}), the covering numbers (\Cref{thm:comparison-theorem}) and a sharpened version of~\Cref{thm:reverse-urysohn} involving the Dvoretzky number (see~\Cref{thm:main}), we require another idea. Observe that the factor $\sqrt{\log n}$ which appears in inequality~\eqref{ineq:comparison-via-KLS} is due to controlling the time at which the minimal eigenvalue $\lambda_n(A_t)$, with $A_t = \Cov(\mu_t)$ denoting the covariance associated with the stochastic localization process $\mu_t$, drops below a universal constant. We prove a more general comparison inequality (see~\Cref{thm:comparison-theorem} for a rigorous statement),
\begin{equation}
    \label{ineq:comparison-ineq-intro}
\E \gauge{G} \leq C \,\min_{1 \leq m \leq n } \, \Big(\sqrt{\log(\e n/m)} \, \E \gauge{X} + \Lip(\gauge{\cdot})\sqrt{m} \Big),
\end{equation}
when $X$ is distributed according to a conditioned Gaussian measure in Bobkov's position and $\gauge{\cdot}$ is a gauge on $\R^n$. This inequality is established by controlling the stopping time for the $m$th smallest eigenvalue $\lambda_{n-m+1}(A_t)$, \ie~the first time at which this eigenvalue drops below a universal constant. The factor $\log(\e n/m)$ arises from the fact that the stopping time for the eigenvalue $\lambda_{n-m+1}(A_t)$ can be bounded on the order of $\log^{-1}(\e n/m)$. We obtain our covering number bounds by combining the comparison inequality~\eqref{ineq:comparison-ineq-intro} with the well-known relations between the negative moments of the spherical mean width, the quermassintegrals, and the width and covering numbers of the intersections $K \cap rB^n_2$, in a form given by Mourtada~\cite{Mou25}.

\section{Preliminaries}

A convex body is a compact, convex set with nonempty interior. We denote by $\Bodies$ the class of all centrally symmetric convex bodies $K \subset \R^n$.

\subsection{\texorpdfstring{$\Bobkov$}{B}-position and conditioned Gaussian measures}

We will make use of conditioned Gaussian measures on $\R^n$. We denote the standard Gaussian measure and the Gaussian measure with variance $\tau^{-2}$ by
\[
\gamma_n = N(0, I_n), \quad \mbox{and} \quad 
\gamma_{\tau, n} = N(0, \tau^{-2} I_n).
\]
The conditioned versions of these measures on some measurable set $L \subset \R^n$ are given by 
\[
\gamma_{L}(A) = \frac{\gamma_n(A \cap L)}{\gamma_n(L)} 
\quad \mbox{and} \quad 
\gamma_{\tau, L}(A) = \frac{\gamma_{\tau,n}(A \cap L)}{\gamma_{\tau, n}(L)} 
\]
for every measurable set $A \subset \R^n$ and $\tau > 0$. 

\medskip 

Following Bobkov~\cite{Bob11}, we say that $K \in \Bodies$ is in \emph{Bobkov's maximal Gaussian measure position} (abbreviated \emph{$\Bobkov$-position}) if
\begin{equation}\label{eqn:variational-condition}
\gamma_n(K) \geq \gamma_n(TK) 
\qquad\text{for every} \quad 
T\in\SL(n).
\end{equation}
Using the first-variation condition for the maximality of $T = I_n$
in the definition of $\Bobkov$-position (see~\eqref{eqn:variational-condition}) together with V.\ Milman's theory of $M$-ellipsoids~\cite{Mil86}, Bobkov~\cite{Bob11} deduces the following properties of $\Bobkov$-position.

\begin{proposition}
[{\cite[Proposition 3.11, p.~32]{Bob11}}]
\label{prop:bobkov}
    If $K \in \Bodies$ is
	    in the $\Bobkov$-position, then the following statements hold:
    \begin{enumerate}[label=(\roman*)]
    \item 
    \label{item:scalar-covariance}
    the covariance matrix of the conditioned measure is scalar, \ie 
\[
\Cov(\gamma_K) = \alpha_{K}^2 I_n;
\]    
    \item
    \label{item:bound-on-alpha}
     if $\vol_n(K) = 1$, then there exists a universal constant $c_{\ref{prop:bobkov}} > 0$ such that 
     $\alpha_{K}^2 \geq c_{\ref{prop:bobkov}}$. 
    \end{enumerate}
\end{proposition}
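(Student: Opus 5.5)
We prove the two items separately, starting from the first-variation condition encoded in~\eqref{eqn:variational-condition}.

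\emph{Item (i).} We perturb the identity inside $\SL(n)$. For a symmetric traceless matrix $H$ and small $s$, set $T_s = e^{sH} \in \SL(n)$. By a change of variables $x = e^{sH} y$, and since $\det e^{sH}=1$,
\[
\gamma_n(T_s K) = (2\pi)^{-n/2}\int_K e^{-|e^{sH}y|^2/2}\,dy .
\]
Differentiating at $s=0$ gives
\[
\frac{d}{ds}\Big|_{s=0}\gamma_n(T_sK) = -\int_K \langle Hy,y\rangle\, d\gamma_n(y).
\]
The function $s\mapsto\gamma_n(T_sK)$ has a maximum at $s=0$. The same holds for $-H$, so this derivative vanishes. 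Hence $\tr\bigl(H\,\E_{\gamma_K}[X X^{\top}]\bigr)=0$ for every traceless symmetric $H$.

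It follows that the second moment matrix $\E_{\gamma_K} XX^{\top}$ is orthogonal to all traceless symmetric matrices. It is therefore a multiple of $I_n$. Since $K$ is centrally symmetric, $\gamma_K$ is symmetric and has mean zero. So the covariance equals the second moment matrix, which gives $\Cov(\gamma_K)=\alpha_K^2 I_n$.

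Differentiability under the integral is routine: $K$ is compact, so the integrand and its $s$-derivative are bounded uniformly on $K$ for small $s$.

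\emph{Item (ii).} Assume $\vol_n(K)=1$. Taking the trace in (i) gives
\[
n\alpha_K^2=\E_{\gamma_K}|X|^2 .
\]
So it suffices to show that a constant fraction of the $\gamma_K$-mass sits at distance at least $c\sqrt n$ from the origin. The density of $\gamma_K$ is at most $\gamma_n(K)^{-1}(2\pi)^{-n/2}$. Hence
\[
\gamma_K(rB_2^n) \le \frac{\vol_n(rB_2^n)}{(2\pi)^{n/2}\gamma_n(K)}.
\]

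Everything then reduces to a lower bound $\gamma_n(K)\ge e^{-Cn}$ for a body in $\Bobkov$-position. This is where Milman's $M$-ellipsoid theorem enters, and it is the main obstacle. For a $\Bobkov$-position body, $\gamma_n(K)\ge\gamma_n(TK)$ for all $T\in\SL(n)$. So it suffices to find one volume-one position $TK$ with large Gaussian measure.

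Choose $T$ so that $TK$ is in $M$-position, meaning $N(TK,B)\le e^{Cn}$ and $N(B,TK)\le e^{Cn}$, where $B=\vr(B_2^n)^{-1}B_2^n$ is the volume-one ball of radius about $\sqrt{n/(2\pi e)}$. Then $B$ is covered by $e^{Cn}$ translates $x_i+TK$. This gives
\[
\gamma_n(B)\le\sum_i\gamma_n(x_i+TK)\le e^{Cn}\gamma_n(TK),
\]
where the last step uses Anderson's inequality for the symmetric convex body $TK$. Moreover $\gamma_n(B)\ge e^{-c'n}$: indeed $B$ has radius of order $\sqrt n$, and the Gaussian density on $B$ is at least $(2\pi)^{-n/2}e^{-n/(4\pi e)}$. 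Combining these gives $\gamma_n(TK)\ge e^{-C''n}$, and hence $\gamma_n(K)\ge e^{-C''n}$.

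Now take $r=c\sqrt n$ with $c$ small. Then
\[
\frac{\vol_n(rB_2^n)}{(2\pi)^{n/2}}\le (Cc)^n \le e^{-2C''n},
\]
so $\gamma_K(c\sqrt n B_2^n)\le 1/2$. Therefore
\[
\E_{\gamma_K}|X|^2\ge \tfrac12 c^2 n,
\]
and so $\alpha_K^2\ge c^2/2=:c_{\ref{prop:bobkov}}$.
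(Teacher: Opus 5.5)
Your argument is correct and follows exactly the route the paper attributes to Bobkov. The first variation of $s\mapsto\gamma_n(e^{sH}K)$ over traceless symmetric $H$ gives scalar covariance; Milman's $M$-ellipsoid theorem together with Anderson's inequality gives $\gamma_n(K)\geq e^{-Cn}$, which your density bound turns into $\alpha_K^2\geq c$.

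Two cosmetic points. The volume-one ball is $\mathrm{vol}_n(B_2^n)^{-1/n}B_2^n$, since in the paper's notation $\mathrm{vr}(B_2^n)=1$. If the $M$-position map has determinant $-1$, compose it with a reflection so that it lies in $\mathrm{SL}(n)$.
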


We also need the following bound in $\Bobkov$-position.
\begin{lemma}
\label{lem:bounded-Lipschitz-constant}
There exists $c_{\ref{lem:bounded-Lipschitz-constant}} > 0$ such that for any $n \geq 1$ and any $K \in \Bodies$ with $K^\circ$ in $\Bobkov$-position and $\vol_n(K^\circ) = 1$, $\rad(K) \leq c_{\ref{lem:bounded-Lipschitz-constant}}$.
\end{lemma}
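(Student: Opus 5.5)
The plan is to show that $K^\circ$ contains a Euclidean ball of universal radius. Duality then confines $K$ to a ball of universal radius, which controls $\rad(K)$. I read $\rad(K)$ as at most the circumradius $\max_{x\in K}|x|$, which is also the Lipschitz constant of $h_K$. If $\rad$ is instead some averaged or second-moment radius, it is still dominated by the circumradius, so the same bound applies.

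\textbf{Step 1.} Apply Proposition~\ref{prop:bobkov} to the body $K^\circ$, which is in $\Bobkov$-position and has $\vol_n(K^\circ)=1$. This gives $\Cov(\gamma_{K^\circ}) = \alpha^2 I_n$ with $\alpha^2 \geq c_{\ref{prop:bobkov}}$.

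\textbf{Step 2.} Fix $\theta\in\sphere$ and let $X\sim\gamma_{K^\circ}$. Since both $K^\circ$ and $\gamma_n$ are centrally symmetric, $\E\langle X,\theta\rangle = 0$. Moreover $|\langle X,\theta\rangle| \le h_{K^\circ}(\theta)$ almost surely. Hence
\[
c_{\ref{prop:bobkov}} \le \alpha^2 = \Var\langle X,\theta\rangle = \E\langle X,\theta\rangle^2 \le h_{K^\circ}(\theta)^2 .
\]
So $h_{K^\circ}(\theta)\ge \sqrt{c_{\ref{prop:bobkov}}}$ for every $\theta\in\sphere$. Since $K^\circ$ is convex and closed, and every closed convex set is the intersection of its supporting half-spaces, this means $\sqrt{c_{\ref{prop:bobkov}}}\,B_2^n\subset K^\circ$.

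\textbf{Step 3.} Polarity reverses inclusions and $(K^\circ)^\circ = K$. Therefore $K\subset c_{\ref{prop:bobkov}}^{-1/2} B_2^n$, which gives $\max_{x\in K}|x| \le c_{\ref{prop:bobkov}}^{-1/2}$. We may then take $c_{\ref{lem:bounded-Lipschitz-constant}} = c_{\ref{prop:bobkov}}^{-1/2}$.

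There is no real obstacle here; all the depth sits in Proposition~\ref{prop:bobkov}\ref{item:bound-on-alpha}. The only point needing care is that the mean of $\gamma_{K^\circ}$ is zero, which is where central symmetry enters. This lets us bound the variance in direction $\theta$ by the squared half-width of $K^\circ$ in that direction.
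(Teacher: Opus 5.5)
Your proof is correct and is essentially the paper's argument: both combine $\Cov(\gamma_{K^\circ})=\alpha^2 I_n$ and $\alpha^2\geq c_{\ref{prop:bobkov}}$ with the bound of the directional variance by the squared half-width of $K^\circ$. The only difference is that the paper applies this in the single direction $\theta=y/\|y\|_2$ of a farthest point $y\in K$, where $|\langle x,\theta\rangle|\leq 1/\rad(K)$ for all $x\in K^\circ$, whereas you apply it in every direction and then dualize via $\sqrt{c_{\ref{prop:bobkov}}}\,B_2^n\subset K^\circ$; also, the mean-zero property you flag is not needed, since $\Var\langle X,\theta\rangle\leq \E\langle X,\theta\rangle^2$ for any random variable.
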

\begin{proof}
Pick $y \in K$ such that $\|y\|_2 = \rad(K)$. Let $\theta = \tfrac{y}{\|y\|_2}$. Then every $x \in K^\circ$ satisfies $|\langle x, \theta \rangle | \leq \tfrac{1}{\rad(K)}$.
Hence, by \Cref{prop:bobkov}, since $\Cov(\gamma_{K^\circ}) = \alpha_{K^\circ}^2 I_n$, the lower bound on $\alpha_{K^\circ}$ gives
\[
c_{\ref{prop:bobkov}} \leq \alpha_{K^\circ}^2 = \Var_{\gamma_{K^\circ}}(\langle X, \theta\rangle) \leq \frac{1}{\rad(K)^2}.
\]
Rearranging gives $\rad(K) \leq 1/\sqrt{c_{\ref{prop:bobkov}}}$, as claimed.
\end{proof}

We also recall the Poincar\'{e} constant of a probability measure $\nu$ on $\R^n$. It is the smallest $C > 0$ such that 
\[
\Var_\nu(f) \leq C\, \E_\nu \|\nabla f\|_2^2 
\]
holds for all locally Lipschitz $f \colon \R^n \to \R$. We denote the smallest such constant by $\PoinConst(\nu)$.

\begin{lemma}
\label{lem:poin-const-of-conditioned-gaussians}
Let $\nu = \cL(Z \mid \{Z \in K\})$, where $K \in \Bodies$ and $Z \sim N(z, \tau^2 I_n)$ for some $\tau > 0$ and some $z  \in \R^n$. Then $\PoinConst(\nu) \leq \tau^2$. 
\end{lemma}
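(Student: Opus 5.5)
The bound follows from the Brascamp--Lieb inequality for measures that are strongly log-concave with respect to a Gaussian. The density of $Z$ is proportional to $\exp\bigl(-\|x-z\|_2^2/(2\tau^2)\bigr)$. Conditioning on $\{Z\in K\}$ multiplies this density by $\1_K$, and since $K$ is convex, $\1_K = e^{-\chi_K}$ with $\chi_K$ a convex function taking values in $[0,\infty]$. So $\nu$ has density $e^{-V}$ with
\[
V(x) = \frac{\|x-z\|_2^2}{2\tau^2} + \chi_K(x) + \text{const},
\]
which is $\tau^{-2}$-uniformly convex: $V - \|\cdot\|_2^2/(2\tau^2)$ is convex. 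Brascamp--Lieb (or equivalently the Bakry--\'Emery criterion) for a measure $e^{-V}$ with $\nabla^2 V \succeq \tau^{-2} I_n$ gives $\Var_\nu(f) \leq \tau^2 \E_\nu\|\nabla f\|_2^2$, which is the claim. The centering $z$ plays no role, and central symmetry of $K$ is not needed; only convexity with nonempty interior is used, so that $\nu$ is well defined.

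The only technical point is that $V$ is not smooth, since $\chi_K$ is infinite outside $K$. I would handle this by approximation. Replace $\chi_K$ by $\phi_k(x) = k\,\mathrm{dist}(x,K)^2$, which is convex and $C^{1,1}$ (and can be mollified to $C^\infty$ while staying convex). The measures $\nu_k \propto \exp\bigl(-\|x-z\|_2^2/(2\tau^2) - \phi_k(x)\bigr)$ have smooth potentials with Hessian $\succeq \tau^{-2} I_n$, so Brascamp--Lieb gives $\PoinConst(\nu_k) \leq \tau^2$.

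As $k\to\infty$, $\nu_k \to \nu$ in total variation, by dominated convergence and because $\partial K$ has Lebesgue measure zero. For a bounded Lipschitz $f$, both $\Var_{\nu_k}(f)$ and $\E_{\nu_k}\|\nabla f\|_2^2$ therefore converge to their $\nu$-counterparts, so the Poincar\'e inequality passes to the limit. The bounded Lipschitz case then extends to all locally Lipschitz $f$ with $\E_\nu\|\nabla f\|_2^2<\infty$ by truncation, using that $K$ is bounded, so that only the values of $f$ on $K$ matter.

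Alternatively, the inequality can be cited directly: a log-concave perturbation of the Gaussian $N(z,\tau^2 I_n)$ has Poincar\'e constant at most $\tau^2$. This is the standard Brascamp--Lieb statement, and also follows from Caffarelli's contraction theorem, since the Brenier map from $N(z,\tau^2 I_n)$ to $\nu$ is $1$-Lipschitz and Lipschitz maps push forward Poincar\'e inequalities with the squared Lipschitz constant. I expect the approximation step to be the only part needing care, and it is routine.
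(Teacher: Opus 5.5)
Your argument is correct and is essentially the paper's: both write $\nu \propto e^{-\Phi}$ with $\nabla^2\Phi = \tau^{-2} I_n$ on $\mathrm{int}(K)$ and $\Phi=+\infty$ off $K$, and conclude via the Brascamp--Lieb (Bakry--\'Emery) bound. The paper defers the boundary issue to an existing convex-domain version of that argument, whereas you handle it explicitly with the penalization $k\,\mathrm{dist}(x,K)^2$, which works as you describe (for the last step, multiplying by a smooth cutoff equal to $1$ near $K$ is a cleaner way than truncation to pass to locally Lipschitz $f$).
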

\begin{proof}
Note that the Lebesgue density of $\nu$ satisfies
\[
\frac{\ud \nu}{\ud x}(y) = 
\mathcal{Z}^{-1}\exp(-\Phi(y)) \quad
\mbox{where}
\quad \Phi(y) = \begin{cases} 
\|z-y\|_2^2/(2\tau^2), & y \in K \\ 
+\infty,& y \not \in K. 
\end{cases}.
\]
Above, $\mathcal{Z} = \int_K \exp(-\Phi(y)) \, \ud y$. Now observe that
\[
\nabla^2 \Phi(y) = \frac{1}{\tau^2} I_n \quad \mbox{for}~y \in \mathrm{int}(K).
\]
Hence, following the argument on p.~37 of~\cite{BarCorEra13}, we obtain 
\[
\Var_\nu(f) \leq \tau^2 \int_{\mathrm{int}(K)} \|\nabla f(x)\|_2^2 \, \ud \nu(x) \leq \tau^2 \E_{X \sim \nu} \|\nabla f(X)\|_2^2,
\]
which shows that $\PoinConst(\nu) \leq \tau^2$, as needed.
\end{proof}

\subsection{Background on stochastic localization}

We make use of Eldan's stochastic localization process~\cite{Eld12, Eld13}, which we introduce now. Let $\mu$ be a probability measure on $\R^n$ which has compact support and is absolutely continuous with respect to Lebesgue measure. We write $p = \tfrac{\ud \mu}{\ud x}$ for its density. 
For $t \geq 0$ and $\theta \in \R^n$, we define the probability measure $\mu_{t, \theta}$ which satisfies
\[
\frac{\ud \mu_{t,\theta}}{\ud \mu}(x) = \e^{\langle \theta, x \rangle - t \|x\|_2^2/2 - 
\Lambda_t(\theta)}, 
\quad 
\Lambda_t(\theta) = 
\log \E_{X \sim \mu} \e^{\langle \theta, X\rangle - t \|X\|_2^2/2}.
\]
We make use of the following notation for the mean and covariance:
\[
a_t(\theta) = \E_{X \sim \mu_{t, \theta}}[X] 
\quad 
\mbox{and} \quad 
A_t(\theta) = \Cov(\mu_{t, \theta}).
\]
Let $\{B_t\}_{t \geq 0}$ be a standard Brownian motion in $\R^n$, with initial condition $B_0 = 0$. Consider the process $\{\theta_t\}_{t \geq 0}$ defined as the solution of the stochastic differential equation:
\[
\ud \theta_t = a_t(\theta_t)\, \ud t + \, \ud B_t,
\]
with initial condition $\theta_0 = 0$.
With a slight abuse of notation, we refer to the 
family of (random) probability measures $\mu_t \equiv \mu_{t, \theta_t}$ for $t \geq 0$ as the \emph{stochastic localization process}. We also write 
\[
a_t \equiv a_t(\theta_t), \quad A_t \equiv A_t(\theta_t), \quad t \geq 0,
\]
for the corresponding (random) mean and covariance processes. 
We refer to a probability measure $\mu$ as being isotropic if, for $X \sim\mu$,\footnote{To avoid confusion, a random vector $X \sim \mu$ (or its law $\mu$) has \emph{scalar covariance} if $\Cov(\mu)=\Cov(X) = \rho I$ for some $\rho > 0$. We reserve \emph{isotropic} for the case $\rho = 1$.}
\[
\E_{\mu} X = 0, \quad \mbox{and} \quad 
\E_{\mu}\, \Big[X \otimes X\Big] =  I_n.
\]

\section{Comparison theorem for conditioned Gaussian measures}

The main result in this section is the following comparison inequality between the Gaussian measure $\gamma_n$ and the conditioned Gaussian measure $\gamma_{K^\circ}$ for $K^\circ$ in Bobkov's position. Recall that a \emph{gauge} is a nonnegative, positively $1$-homogeneous, convex function.

\begin{theorem}
\label{thm:comparison-theorem}
There exists a constant~$C_{\ref{thm:comparison-theorem}} \in (0, \infty)$ such that for any $n \geq 1$ and any $K \in \Bodies$ with $K^\circ$ in $\Bobkov$-position and $\vol_n(K^\circ) = 1$, we have
\[
\E_{\gamma_n} \gauge{G} \leq C_{\ref{thm:comparison-theorem}}
 \Big(\sqrt{\log(\e n/m)} \E_{\gamma_{K^\circ}} \gauge{X}  + \Lip\big(\gauge{\cdot}\big) \sqrt{m} \Big),
\]
for any gauge $\gauge{\cdot} \colon \R^n \to \R_+$ and any $m \in\{1, 2, \dots, n\}$. 
\end{theorem}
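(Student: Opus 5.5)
We run Eldan's stochastic localization, in the form described above, with initial measure $\mu=\gamma_{K^\circ}$, and write $\mu_t$, $a_t$, $A_t$ for the associated processes. Two standard facts drive the argument. First, $\E a_t=\E_\mu X=0$, and $a_t$ is a martingale with $\ud a_t=A_t\,\ud B_t$. Second, $\mu_t$ has density proportional to $\e^{\langle\theta_t,x\rangle-(1+t)\|x\|_2^2/2}$ on $K^\circ$. So $\mu_t$ is a Gaussian $N(z_t,(1+t)^{-1}I_n)$ conditioned on $K^\circ$, and \Cref{lem:poin-const-of-conditioned-gaussians} gives $\PoinConst(\mu_t)\le (1+t)^{-1}\le 1$ for every $t$. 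In particular $A_t\preceq I_n$ throughout.

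For a stopping time $\tau$, Jensen and convexity of the gauge give
\[
\E_{\gamma_{K^\circ}}\gauge{X}=\E\,\E_{\mu_\tau}\gauge{X}\ge \E\gauge{a_\tau}.
\]
It therefore suffices to compare $a_\tau$ with a Gaussian vector. Write
\[
a_\tau=\int_0^\tau A_s\,\ud B_s .
\]
As in Eldan--Lehec and Bizeul--Klartag, if $\int_0^\tau A_s^2\,\ud s\succeq c\,T\,P$ for a projection $P$ of rank at least $n-m$, then a Gaussian-comparison (time-change) argument shows that $\E\gauge{a_\tau}$ dominates $\sqrt{cT}\,\E\gauge{PG}$, up to error terms. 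This uses convexity and an independent Brownian complement to fill the missing covariance: enlarge the process by an independent martingale whose quadratic variation completes it to $T I$. The remaining $m$ directions cost at most $\Lip(\gauge{\cdot})\,\E\|(I-P)G\|_2\le \Lip(\gauge{\cdot})\sqrt m$.

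This reduces the theorem to showing the following. With a fixed universal $c$, for $T\simeq 1/\log(\e n/m)$ we have, with probability at least $1/2$ (or in expectation in the right sense), $\lambda_{n-m+1}(A_t)\ge c$ for all $t\le T$. Granting this, $\E\gauge{G}\lesssim T^{-1/2}\E\gauge{X}+\Lip\sqrt m$, which is exactly the claimed bound.

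The main obstacle is this eigenvalue control. At $t=0$, \Cref{prop:bobkov} gives $A_0=\alpha^2 I_n$ with $\alpha^2\ge c_{\ref{prop:bobkov}}$. The covariance evolves by
\[
\ud A_t=\E_{\mu_t}\big[(X-a_t)^{\otimes 2}\langle X-a_t,\ud B_t\rangle\big]-A_t^2\,\ud t .
\]
The drift $-A_t^2\,\ud t$ is harmless on $[0,T]$ because $A_t\preceq I$. For the martingale term, I would bound the third-moment tensor using the Poincar\'e constant: for unit $\theta$, Poincar\'e applied to $\langle x-a_t,\theta\rangle^2$ gives
\[
\big\|\E_{\mu_t}(X-a_t)\langle X-a_t,\theta\rangle^2\big\|_2\lesssim \PoinConst(\mu_t)\le 1 .
\]
This mean-zero log-concave estimate is what I expect \Cref{lem:mean-zero-log-ccv} to provide.

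To control the $m$-th smallest eigenvalue, I would track the potential
\[
\Gamma_t=\tr\big((A_t-\tfrac{c}{2}I)^{-q}\big)
\]
restricted to the bottom part of the spectrum. Alternatively, use a Lee--Vempala-type argument on $\sum_i \phi(\lambda_i(A_t))$ with $\phi$ blowing up at $c$. Itô's formula plus the tensor bound gives a drift of order $q\,\Gamma_t$ per unit time. Now take $q\simeq\log(\e n/m)$. Then $\Gamma_0\le n(\alpha^2-c/2)^{-q}$ stays below the threshold $m(c')^{-q}$, whose exceedance would force $m$ eigenvalues below $c'$, up to time $T\simeq 1/q$. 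A union bound (Doob/Markov) then yields the stopping-time estimate $\Prob(\tau<T)\le 1/2$. Here $\tau$ is the first time $\lambda_{n-m+1}(A_t)<c$, and the comparison step above is applied on the event $\{\tau\ge T\}$, using $\tau\wedge T$. Getting the exponent $q$ to produce exactly $\log(\e n/m)$ rather than $\log n$ is the delicate point.
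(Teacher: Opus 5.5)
Your outline has the same architecture as the paper: localize $\gamma_{K^\circ}$, get $\PoinConst(\mu_t)\le(1+t)^{-1}$ from \Cref{lem:poin-const-of-conditioned-gaussians}, use Jensen to pass to $\E\gauge{a_T}$, fill a covariance deficit of rank at most $m$ to compare with a Gaussian, and keep $\lambda_{n-m+1}(A_t)$ above a constant up to $T\simeq 1/\log(\e n/m)$. But that last step is the whole content of the theorem, and you leave it open. The mechanism you propose for it fails when $\log(\e n/m)\ll\log n$, a regime the later applications use. The potential $\Gamma_t=\tr((A_t-\tfrac c2 I)^{-q})$ is infinite as soon as \emph{any} eigenvalue of $A_t$ reaches $c/2$. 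Only the implication ``$m$ eigenvalues below $c'$ $\Rightarrow$ threshold exceeded'' holds, and a single eigenvalue near $c/2$ exceeds the threshold just as well. So your stopping time is really one for $\lambda_n(A_t)$, whose horizon is in general only of order $1/\log n$; this is precisely the source of the $\sqrt{\log n}$ loss in Bizeul--Klartag. For example, if $K^\circ$ is a cube, then $\gamma_{K^\circ}$ is a product measure and the $n$ coordinate variances localize independently. Each falls below a fixed fraction of $\alpha^2$ by time $t$ with probability roughly $\e^{-C/t}$. So at times $t\gg 1/\log n$ some variance is below $c/2$ with high probability, and $\Gamma_t=\infty$ while $\lambda_{n-m+1}(A_t)$ can still be of order $\alpha^2$. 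Even where $\Gamma_t$ is finite, its It\^o drift is of order $q^2(\lambda_n(A_t)-\tfrac c2)^{-2}\Gamma_t$, not $q\Gamma_t$. Relatedly, the It\^o correction of a spectral potential needs the quantity $\opnorm{\sum_i H_{e_i,t}^2}=\sup_{\theta}\fronorm{H_{\theta,t}}^2$. Your bound controls only $\|H_{\theta,t}\theta\|_2$, which is one column of $H_{\theta,t}$. \Cref{lem:mean-zero-log-ccv} gets $\fronorm{H_\theta}^2\le 4\PoinConst(\mu)\opnorm{\Cov(\mu)}^2$ by applying Poincar\'e to the quadratic form $x\mapsto\langle H_\theta x,x\rangle$; without this Frobenius bound one would need KLS.

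The missing idea is a potential that lower-bounds $\lambda_{n-m+1}$ while ignoring the lowest $m-1$ eigenvalues. The paper uses the soft-min $f_{\beta,m}(A)=-\beta^{-1}\log\tr(\tfrac1m\e^{-\beta A})$. It is finite for every $A$, satisfies $f_{\beta,m}(A)\le\lambda_{n-m+1}(A)$, and in isotropic normalization starts at $1-\log(n/m)/\beta$. The factor $\tfrac1m$ inside the trace is exactly what replaces $\log n$ by $\log(n/m)$. Its gradient is positive semidefinite with unit trace, and its Hessian term is at least $-\beta\opnorm{\sum_i H_{e_i,t}^2}$. Hence, by \Cref{lem:computation-for-conditional-measure,lem:mean-zero-log-ccv}, the drift is at least $-C(1+\beta)$, while \Cref{lem:three-tensor-log-ccv} gives $[N]_t\le Ct$ for the martingale part. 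Choosing $\beta\simeq\log(\e n/m)$ and $T\simeq 1/\beta$ keeps $f_{\beta,m}(A_t)\ge N_t+\tfrac34$ on $[0,T]$. Then \Cref{lem:maximal-inequality-for-cts-martingales} bounds the failure probability by $\e^{-c/T}\le m/n$.

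There are also two smaller issues in your comparison step. First, eigenvalue control gives $A_t^2\succeq c^2P_t$ for a \emph{time-dependent} projection $P_t$, not $\int_0^\tau A_s^2\,\ud s\succeq cTP$ for a fixed $P$. The deficit must therefore be filled pointwise in time; the paper uses $\Sigma_t=\phi(A_t)$ with $\phi(u)=\max\{u,\tfrac12\}$, driven by the same Brownian motion, together with \Cref{lem:comparison-result}. Second, bounding the filler in $L^2$ requires $\Prob(\text{bad})\lesssim m/n$, not $1/2$. Probability $1/2$ can be made to work with an independent filler: its conditional contribution on the bad event is at most $\sqrt{cT}\,\E\gauge{G}$ and can be absorbed into the left side. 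That argument has to be supplied, but the soft-min argument delivers $m/n$ anyway.
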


The next result shows that control of the lower eigenvalues of $\{A_t\}_{t \geq 0}$ up to time $T \geq 0$ yields a comparison between the expected norm under the Gaussian measure and that under a conditioned Gaussian measure. We use the notation
\[
\lambda_1(A) \geq \lambda_2(A) \geq \cdots \geq \lambda_n(A)
\]
for the (ordered) eigenvalues of a real, $n \times n$ symmetric matrix $A$. 
\begin{proposition}
\label{prop:from-eigen-control-to-comparison-theorem}
Let $X \sim \mu$, where $\mu$ is a centered probability measure on $\R^n$. Suppose that for $T > 0$ and some $m \in \{1, 2, \dots, n\}$, the stochastic localization process $\{\mu_t\}_{t \geq 0}$ satisfies
    \begin{equation}
    \label{eqn:control-of-eigen-val-event}
    \Prob\bigg\{ \inf_{t \in [0, T]}\lambda_{n-m+1}(A_t) \leq \frac{1}{2} \bigg\} \leq \frac{m}{n}. 
    \end{equation}
    Then, for any gauge $\gauge{\cdot}$ on $\R^n$, we have
    \[
    \E_{\gamma_n} \gauge{G} \leq \frac{2}{\sqrt{T}}\,  \E_\mu \gauge{X} + \Lip\big(\gauge{\cdot}\big)\, \sqrt{2m}.
    \]
\end{proposition}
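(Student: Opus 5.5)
The plan is to run the Eldan--Lehec comparison argument on the stochastic localization of $\mu$. There are three steps: pass from $X$ to the localized mean $a_T$ by Jensen, compare $a_T$ with a Brownian motion along the directions where $A_t$ is large, and absorb the few remaining directions, together with the bad event, into the Lipschitz term. The step I am least sure of is the middle one; a concrete route is given below, but its details are not checked.

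\emph{Step 1 (Jensen).} The process $\{\mu_t\}$ is a measure-valued martingale with $\E\mu_t=\mu$, and $a_0=\E_\mu X=0$. Since $\gauge{\cdot}$ is convex,
\[
\E_\mu\gauge{X}=\E\int\gauge{x}\,\ud\mu_T(x)\geq \E\gauge{a_T}.
\]
The usual It\^o computation gives $\ud a_t=A_t\,\ud B_t$, so $a_T=\int_0^T A_s\,\ud B_s$. By optional stopping, the same Jensen argument applies at the bounded stopping time
\[
\tau=T\wedge\inf\{t:\lambda_{n-m+1}(A_t)\leq \tfrac12\}.
\]
It therefore suffices to bound $\E\gauge{\int_0^{\tau}A_s\,\ud B_s}$ from below.

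\emph{Step 2 (comparison with Brownian motion).} On $[0,\tau]$, at least $n-m+1$ eigenvalues of $A_s$ exceed $\tfrac12$. Let $Q_s$ be the spectral projection of $A_s$ onto those eigenvalues, so that $A_sQ_s\succeq \tfrac12 Q_s$. The aim is
\[
\E\gauge{\textstyle\int_0^\tau A_s\,\ud B_s}\geq \tfrac12\,\E\gauge{\textstyle\int_0^\tau Q_s\,\ud B_s}.
\]
I would try to obtain this by a conditional-Jensen (convex-order) argument: write $\int_0^\tau A_s\,\ud B_s$ as $\tfrac12\int_0^\tau Q_s\,\ud B_s$ plus a martingale part that has nonnegative quadratic variation and is conditionally centered. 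Doing this may require enlarging the filtration with an independent Brownian motion $W$ and adding the martingale $\int(cI-A_s^2)_+^{1/2}\,\ud W_s$, which is harmless by Jensen. \emph{This is the main obstacle.} The matrix factor $A_s$ cannot simply be pulled out of a gauge, and $A_s$ need not be bounded above, so one must build the coupling carefully. If a direct coupling fails, I would first try the Bizeul--Klartag route. That route compares $\int Q_s\,\ud B_s$ with $\int A_s\,\ud B_s$ via a Dambis--Dubins--Schwarz-type time change on each direction.

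\emph{Step 3 (from $Q$ to a full Gaussian).} Write $B_T=\int_0^\tau Q_s\,\ud B_s+R$, where
\[
R=\int_0^\tau (I-Q_s)\,\ud B_s+\int_\tau^T\ud B_s.
\]
Using $\E\gauge{G}=T^{-1/2}\E\gauge{B_T}$, the triangle inequality, and $\gauge{y}\leq\Lip(\gauge{\cdot})\|y\|_2$, I get
\[
\sqrt T\,\E\gauge{G}\leq \E\gauge{\textstyle\int_0^\tau Q_s\,\ud B_s}+\Lip(\gauge{\cdot})\,(\E\|R\|_2^2)^{1/2}.
\]
By the It\^o isometry, $\E\|R\|_2^2=\E\int_0^\tau\tr(I-Q_s)\,\ud s+n\,\E(T-\tau)$. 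Here $\tr(I-Q_s)\leq m-1$, and hypothesis~\eqref{eqn:control-of-eigen-val-event} gives $\Prob(\tau<T)\leq m/n$. Hence $\E\|R\|_2^2\leq (m-1)T+mT\leq 2mT$. Dividing by $\sqrt T$ and combining with Steps 1 and 2 yields
\[
\E_{\gamma_n}\gauge{G}\leq \frac{2}{\sqrt T}\,\E_\mu\gauge{X}+\Lip(\gauge{\cdot})\sqrt{2m}.
\]
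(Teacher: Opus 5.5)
\textbf{There is a genuine gap in your Step 2.} Steps 1 and 3 are sound, and Step 3 does reproduce the constants $2/\sqrt{T}$ and $\sqrt{2m}$. The trouble is not unchecked details. The inequality $\E\gauge{\int_0^\tau A_s\,\ud B_s}\geq\frac12\E\gauge{\int_0^\tau Q_s\,\ud B_s}$ is false for general adapted $A$ and stopping times. So no coupling argument that uses only $A_sQ_s\succeq\frac12 Q_s$ on $[0,\tau]$ can prove it.

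Here is a counterexample. Take $n=m=1$, so $Q\equiv 1$ on $[0,\tau)$ and the claim reads $\E|N_\tau|\geq\frac12\E|B_\tau|$, where $N=\int A\,\ud B$. Now $\frac12 B_\tau=\int_0^\tau (2A_s)^{-1}\,\ud N_s$ is a transform of $N$ by a predictable integrand with values in $(0,1]$, and such transforms are unbounded on $L^1$. Concretely, let $1+N$ run the double-or-nothing scheme: in stage $j$ it moves from $2^{j-1}$ until it exits $(0,2^j)$. Take $A\equiv 1$ in odd stages and $A\equiv 1/\epsilon$ in even stages. Set $A\equiv\frac14$ once $1+N$ hits $0$ or $k$ stages are completed, so that for large $T$ this moment is your $\tau$. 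Then $\E|N_\tau|\leq 2$ because $1+N\geq 0$ has mean $1$. The odd-stage increments alone contribute at least $(k-1)/3$ to $\E|B_\tau|$, and the even stages subtract at most $\epsilon k/2$. Hence the ratio is unbounded.

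The paper's comparison lemma avoids this because it compares $M_T$ with $\sqrt{\rho}\,X_T$ for a Brownian motion $X$ at a \emph{deterministic} time, whose law is that of $\sqrt{\rho}\,B_T$. That argument needs $\ud[M]_t\succeq\rho I_n$ on the whole interval. The variable $\frac12\int_0^\tau Q_s\,\ud B_s$ is not of this form. Also, padding $\int A\,\ud B$ with an independent $\int(\cdot)\,\ud W$ is not harmless here. By Jensen it can only increase $\E\gauge{\cdot}$, which is the wrong direction for a lower bound on $\E\gauge{a_\tau}$.

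\textbf{The missing idea is to lift the small eigenvalues rather than project them out, and to work at the deterministic time $T$ throughout.} Set
\[
\Sigma_t=\sum_i\max\{\lambda_i^{(t)},\tfrac12\}\,u_i^{(t)}\otimes u_i^{(t)}, \qquad v_T=\int_0^T\Sigma_t\,\ud B_t .
\]
Then $\ud[v]_t=\Sigma_t^2\,\ud t\succeq\frac14 I_n$ on all of $[0,T]$. The conditional-Jensen lemma therefore gives $\E\gauge{v_T}\geq\frac12\sqrt{T}\,\E_{\gamma_n}\gauge{G}$.

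The cost of lifting is paid in $L^2$. This is the same bookkeeping as your Step 3, but done on the localization side instead of the Brownian side. Before $\tau$ one has $\fronorm{\Sigma_t-A_t}^2\leq m/4$, and $\fronorm{\Sigma_t-A_t}^2\leq n/4$ always. By It\^o's isometry and the hypothesis,
\[
\E\|v_T-a_T\|_2^2\leq\frac{mT}{4}+\frac{nT}{4}\,\Prob\{\tau\leq T\}\leq\frac{mT}{2}.
\]
Hence $\E\gauge{v_T}\leq\E\gauge{a_T}+\Lip(\gauge{\cdot})\sqrt{mT/2}$. Jensen at the deterministic time $T$, namely $\E\gauge{a_T}\leq\E_\mu\gauge{X}$, then finishes the proof with exactly your constants.
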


To control the probability in~\eqref{eqn:control-of-eigen-val-event}, it suffices that the family of measures $\{\mu_t\}_{t \leq T}$ has uniformly small Poincar\'{e} constants. This observation, in various forms, appears in many works on stochastic localization (\eg~Eldan~\cite[Section 3]{Eld13}, Klartag--Lehec~\cite[Lemma~5.2]{KlaLeh22}, Bizeul~\cite[Lemma~3.6]{Biz24}, Lee--Vempala~\cite[Lemma 38]{LeeVem24}, Bizeul--Klartag~\cite[Proposition 2.3]{BizKla26}).

\begin{proposition}
\label{prop:eigen-stop-upper}
There exists $C_{\ref{prop:eigen-stop-upper}} > 0$ 
such that the following holds. 
For any positive integers $m \leq n$,
any $\delta > 0$, and any $K \in \Bodies$ for which $\gamma_{\delta, K}$ is isotropic, the inequality~\eqref{eqn:control-of-eigen-val-event} holds with $T = C_{\ref{prop:eigen-stop-upper}} \delta^6/(\log(\e n/m))$. 
\end{proposition}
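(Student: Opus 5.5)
The plan is to identify the localized measures $\mu_t$ explicitly, use \Cref{lem:poin-const-of-conditioned-gaussians} to bound their covariances and third moments uniformly in $t$, and then control the lower part of the spectrum of $A_t$ with an exponential trace potential and a martingale argument.

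\emph{Structure of $\mu_t$.} Write $\mu=\gamma_{\delta,K}$. Its density is proportional to $\e^{-\delta^2\|x\|_2^2/2}\1_K$, so the density of $\mu_{t}$ is proportional to $\e^{\langle\theta_t,x\rangle-(\delta^2+t)\|x\|_2^2/2}\1_K$. Thus $\mu_t$ is the law of $Z\sim N\big(\theta_t/(\delta^2+t),(\delta^2+t)^{-1}I_n\big)$ conditioned on $\{Z\in K\}$. By \Cref{lem:poin-const-of-conditioned-gaussians}, $\PoinConst(\mu_t)\le(\delta^2+t)^{-1}\le\delta^{-2}$ for every $t\ge0$. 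Since $\mu$ is isotropic and $\PoinConst(\mu)\ge\lambda_1(\Cov\mu)$, we get $\delta\le1$.

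From the Poincar\'e bound I would extract two consequences, valid for all $t$ almost surely:
\begin{itemize}
\item[(a)] $A_t\preceq\PoinConst(\mu_t)I_n\preceq\delta^{-2}I_n$, by testing the Poincar\'e inequality on linear functions.
\item[(b)] The third-moment tensor $H_t=\E_{\mu_t}[(X-a_t)^{\otimes3}]$ satisfies $\sup_{|u|=|v|=1}\|H_t(u,v,\cdot)\|_2\le C\delta^{-3}$. To see this, apply Cauchy--Schwarz to $\E[\langle X-a_t,u\rangle\langle X-a_t,v\rangle\langle X-a_t,w\rangle]$ against $\langle X-a_t,w\rangle$. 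Then bound the variance of the quadratic $\langle X-a_t,u\rangle\langle X-a_t,v\rangle$ by Poincar\'e, giving $\lesssim\PoinConst\cdot\|A_t\|_{\rm op}\lesssim\delta^{-4}$, and use $\langle A_tw,w\rangle\le\delta^{-2}$.
\end{itemize}
In effect this is \Cref{lem:mean-zero-log-ccv}: the point is that these bounds hold without any KLS input, because the Poincar\'e constants of all $\mu_t$ are explicit.

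\emph{Spectral dynamics.} Recall the standard equation
\[
\ud A_t=H_t(\,\cdot\,,\cdot\,,\ud B_t)-A_t^2\,\ud t,
\]
valid for stochastic localization started at a compactly supported measure. Let $\beta=C\log(\e n/m)$ and define the potential
\[
\Psi_t=\tr\big(\e^{-\beta(A_t-I_n)}\big),\qquad \Psi_0=n.
\]
On the event $\{\lambda_{n-m+1}(A_t)\le 1/2\}$ at least $m$ eigenvalues lie below $1/2$, so $\Psi_t\ge m\e^{\beta/2}$. Choosing $C$ large makes $m\e^{\beta/2}\ge 4n\cdot(n/m)$. Hence, by Doob's maximal inequality applied to a supermartingale-type bound, it suffices to show that $\E\sup_{t\le T}\Psi_t\le 4n$.

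Apply It\^o's formula together with the Daleckii--Krein formula for $\ud\tr f(A_t)$:
\begin{itemize}
\item The drift coming from $-A_t^2\,\ud t$ contributes $\beta\tr(A_t^2\e^{-\beta(A_t-I)})\,\ud t\le\beta\delta^{-4}\Psi_t\,\ud t$, using (a).
\item The second-order term is a sum over eigenvalue pairs $(i,j)$ of divided differences of $\e^{-\beta\lambda}$, weighted by $\|H_t(e_i,e_j,\cdot)\|^2$.
\end{itemize}
For the second-order term I would first try the Lee--Vempala / Bizeul--Klartag splitting. Bound the divided differences by $\beta^2\e^{-\beta\min(\lambda_i,\lambda_j)}$. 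Then use (b) in the stronger summed form $\sum_j\|H_t(e_i,e_j,\cdot)\|^2\le C\delta^{-6}$, which is the Hilbert--Schmidt version of the Cauchy--Schwarz argument: the variance of $\langle X,e_i\rangle(X-a_t)$ summed over coordinates is again controlled by Poincar\'e. This gives
\[
\ud\,\E\Psi_t\le C(\beta\delta^{-4}+\beta^2\delta^{-6})\,\E\Psi_t\,\ud t.
\]

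\emph{Main obstacle.} The crude estimate above only yields $T\sim\delta^6/\log^2(\e n/m)$, whereas the proposition asks for one logarithm. The difficulty is the $\beta^2$ second-order term. To remove one factor of $\beta$ I would argue as follows.
\begin{itemize}
\item Work only up to the stopping time $\tau$ at which $\lambda_{n-m+1}$ first hits $1/2$. Before $\tau$, the relevant weights $\e^{-\beta\lambda_i}$ are concentrated on at most $m$ directions near the bottom of the spectrum.
\item Note that the needed deviation is of order $1$ while the quadratic variation of each eigenvalue grows at rate $\lesssim\delta^{-6}$. A Gaussian-tail (Freedman) bound then gives a probability of $\exp(-c\delta^6/t)$ for a fixed set of directions to drop by $1/2$. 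This equals $(m/n)^{C}$ once $t=c\delta^6/\log(\e n/m)$.
\item Implement this through the potential with $\beta$ replaced by a level-dependent $\beta_t$ (or via $\log\Psi_t$), so that the martingale part is $O(\beta)$ and its quadratic variation is $O(\beta^2\delta^{-6}t)$, which is $O(\beta)$ at the target time.
\end{itemize}
I expect this sharpening — extracting a single $\log(\e n/m)$ from the It\^o correction while retaining the $m/n$ failure probability — to be the delicate step. The structural inputs (a) and (b) and the drift bound should be routine.
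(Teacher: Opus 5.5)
Your structural inputs are right: the explicit form of $\mu_t$, $\PoinConst(\mu_t)\le\delta^{-2}$, $\delta\le1$, and the drift bounds. The gap is at the end. You assert that your potential argument only gives $T\sim\delta^6/\log^2(\e n/m)$, and you leave the single logarithm to a sketch that is not a proof. In particular, a Freedman bound ``for a fixed set of directions'' does not control $\lambda_{n-m+1}(A_t)$. That quantity is an infimum over all $m$-dimensional subspaces while the bottom eigenspace moves, and a union bound over subspaces is far too expensive.

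In fact the obstacle is a bookkeeping artifact: your own estimates already give one logarithm. The drift bound you derive holds pathwise, $D_t\le C_\beta\Psi_t$ with $C_\beta\lesssim\beta^2\delta^{-6}$ (since $\delta\le1\le\beta$). Hence $\e^{-C_\beta t}\Psi_t$ is a nonnegative local supermartingale, so a supermartingale, and Ville's maximal inequality gives
\[
\Prob\Big\{\inf_{t\in[0,T]}\lambda_{n-m+1}(A_t)\le\tfrac12\Big\}
\le\Prob\Big\{\sup_{t\le T}\Psi_t\ge m\e^{\beta/2}\Big\}
\le\frac{n}{m}\,\e^{C_\beta T-\beta/2}.
\]
Demanding $\E\sup_{t\le T}\Psi_t\le 4n$ forces $C_\beta T=O(1)$, i.e.\ $T\sim\delta^6/\beta^2$. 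But the threshold $m\e^{\beta/2}$ exceeds $\Psi_0=n$ by a factor exponential in $\beta$, so you can let $\Psi$ grow by $\e^{\beta/4}$. Take $\beta=8\log(\e n/m)$ and $C_\beta T\le\beta/4$; then the right-hand side is at most $\frac nm\big(\frac{m}{\e n}\big)^2\le\frac mn$. The condition $C_\beta T\le\beta/4$ holds for $T=c\,\delta^6/\log(\e n/m)$.

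One caution on (b). The dimension-free bound $\fronorm{H_{e_i,t}}^2\le 4\PoinConst(\mu_t)\opnorm{A_t}^2$ must come, as in \Cref{lem:mean-zero-log-ccv}, from applying Poincar\'e to the single quadratic form $\langle H_{e_i,t}(X-a_t),X-a_t\rangle$. Summing Poincar\'e bounds for $\Var(\langle X-a_t,e_i\rangle\langle X-a_t,e_j\rangle)$ over $j$ loses a factor of $n$.

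With this fix your argument is a valid variant of the paper's. The paper uses $f_{\beta,m}(A)=-\beta^{-1}\log\tr(\tfrac1m\e^{-\beta A})=1-\beta^{-1}\log(\Psi/m)$, which is exactly your ``$\log\Psi_t$'' option with $\beta$ fixed. Because $\nabla f_{\beta,m}$ is a density matrix, the $-A_t^2$ drift costs at most $\opnorm{A_t}^2\le\delta^{-4}$. The It\^o correction costs at most $\tfrac\beta2\opnorm{\sum_iH_{e_i,t}^2}\le 2\beta\delta^{-6}$. The martingale part has quadratic variation at most $9\delta^{-6}t$ by \Cref{lem:three-tensor-log-ccv}, and \Cref{lem:maximal-inequality-for-cts-martingales} then gives the tail $\e^{-c\delta^6/T}$.

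Your exponential-supermartingale form is the exponentiated version of the same computation. It replaces the Gaussian tail bound by Ville's inequality and needs no control of the martingale part. So \Cref{lem:three-tensor-log-ccv}, and the log-concavity it relies on, becomes unnecessary.
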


Note that in~\Cref{prop:eigen-stop-upper}, since $\gamma_{\delta, K}$ is assumed isotropic, testing with linear functionals and applying~\Cref{lem:poin-const-of-conditioned-gaussians} show that $\delta \leq 1$.

\subsection{Proof of~\texorpdfstring{\Cref{prop:from-eigen-control-to-comparison-theorem}}{the comparison from eigenvalue control}}

\label{app:proofs-of-stoch-loc-lemmas}

We will require the following comparison lemma. The proof is a simple conditioning argument and Jensen's inequality, as given below. An alternative formulation, with a proof using the heat semigroup, is given in 
~\cite[Proposition 3.2]{BizKla26}. In the result below, we denote by $\cS^n_+$ the family of real, symmetric, positive semidefinite $n \times n$ matrices.

\begin{lemma}
\label{lem:comparison-result}
Let $\{M_t\}_{0\leq t\leq T}$ be an $\R^n$-valued, continuous $L^2$ martingale, with $M_0=0$. 
Suppose that, for some predictable, $\cS^n_+$-valued process $\{C_t\}_{0 \leq t \leq T}$, $\ud [M]_t = C_t \, \ud t$. If, for some $\rho>0$,
$C_t \succeq \rho I_n$ holds almost surely, then 
for every convex function $F \colon \R^n\to\R$,
\[
\E F(M_T)\geq \E F\bigl(\sqrt{\rho}\,B_T\bigr),
\]
where $\{B_t\}_{0 \leq t\leq T}$ is a standard Brownian motion in $\R^n$ indexed by $[0, T]$.
\end{lemma}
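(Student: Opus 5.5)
The plan is a comparison via a Dambis--Dubins--Schwarz-type time change, done by an explicit coupling with Jensen's inequality.

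\emph{Step 1: reduce to a time-changed Brownian motion.} Since $\ud[M]_t = C_t\,\ud t$ with $C_t \succeq \rho I_n$ and $C_t$ predictable, set $D_t = C_t - \rho I_n \succeq 0$, with symmetric square root $D_t^{1/2}$, which is predictable. On an enlarged probability space, take an independent standard Brownian motion $W$. Let $\beta$ be an $n$-dimensional Brownian motion chosen so that
\[
\ud M_t = \sqrt{\rho}\,\ud\beta_t + D_t^{1/2}\,\ud W'_t,
\]
where $W'$ is a further Brownian motion with $[\beta,W']=0$. To construct this, use the martingale representation $M_t=\int_0^t C_s^{1/2}\,\ud\widetilde B_s$, which is valid after enlargement; the same enlargement handles any degeneracy of $C_s$. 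Then rotate $(\widetilde B, W)$ as follows. Define
\[
\ud\beta = \sqrt{\rho}\,C^{-1/2}\,\ud\widetilde B + (\text{orthogonal complement terms}),
\]
where the correction uses $W$ so that $\beta$ is a standard Brownian motion by L\'evy's characterization. An easier concrete route avoids the rotation: define $B^\ast_t := M_t - N_t$, where
\[
N_t := \int_0^t D_s^{1/2}\,\ud W_s + \dots
\]
In either case, the target decomposition is
\[
M_T = \sqrt{\rho}\,\beta_T + R_T, \qquad \E[R_T \mid \beta_T] = 0 .
\]

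\emph{Step 2: Jensen's inequality.} Once this decomposition holds, conditional Jensen gives
\[
\E F(M_T) = \E\,\E[F(\sqrt\rho\,\beta_T + R_T)\mid \beta_T] \geq \E F\bigl(\sqrt\rho\,\beta_T + \E[R_T\mid\beta_T]\bigr) = \E F(\sqrt\rho\,\beta_T).
\]
Since $\beta_T$ has the law of $B_T$, this is the claim. Integrability is handled by the assumption that $M$ is $L^2$, together with truncating $F$ from below by affine minorants (convex $F$ is bounded below by an affine function) and using monotone convergence.

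\emph{Main obstacle: obtaining $\E[R_T\mid\beta_T]=0$.} Orthogonality of martingales alone is not enough. The cleanest fix reverses the construction. Take $\beta$ to be a Brownian motion built from $M$ together with independent noise, with
\[
\ud\beta_t = \rho^{-1/2}\,\ud M_t - \rho^{-1/2}\,D_t^{1/2}\,\ud W_t .
\]
This is not Brownian in general, so instead construct $\beta$ directly by
\[
\ud \beta_t = \sqrt{\rho}\,C_t^{-1/2}\,\ud\widetilde B_t + (I-\rho C_t^{-1})^{1/2}\,\ud W_t .
\]
Its quadratic variation is $\rho C^{-1} + (I-\rho C^{-1}) = I$, so $\beta$ is a standard Brownian motion. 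Moreover $\widetilde B - \sqrt\rho\,\beta$ has zero covariation with $\beta$.

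If this still fails to give conditional centering, use the alternative the paper itself alludes to. Let $P_s$ be the heat semigroup and consider $\Phi(t)=\E\,(P_{\rho(T-t)}F)(M_t)$. By It\^o's formula,
\[
\frac{\ud}{\ud t}\Phi = \tfrac12\,\E\,\tr\bigl[(C_t-\rho I)\,\nabla^2 P_{\rho(T-t)}F(M_t)\bigr] \geq 0,
\]
because $\nabla^2 P F\succeq0$ for convex $F$. Hence
\[
\E F(M_T) = \Phi(T) \geq \Phi(0) = P_{\rho T}F(0) = \E F(\sqrt\rho\,B_T).
\]
I expect this semigroup argument to be the robust proof. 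It requires first mollifying $F$ so that $P_sF$ is smooth with controlled growth, and then passing to the limit. The key steps are therefore: smooth $F$, apply It\^o's formula, use the sign of the trace term, and remove the smoothing.
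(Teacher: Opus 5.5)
Your final argument, the heat-flow interpolation $\Phi(t)=\E\,(P_{\rho(T-t)}F)(M_t)$ with nonnegative drift $\tfrac12\E\tr[(C_t-\rho I_n)\nabla^2P_{\rho(T-t)}F(M_t)]$, is correct. It is a different proof from the paper's; it is essentially the heat-semigroup proof the paper cites from Bizeul--Klartag.

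The paper instead completes the coupling you set up and then abandoned. Your $\beta$ is exactly its $X$, because $\sqrt\rho\,C_t^{-1/2}\,\ud\widetilde B_t=\sqrt\rho\,C_t^{-1}\,\ud M_t$. Since $\ud[M,X]_t=\sqrt\rho\,I_n\,\ud t$, the process $N=M-\sqrt\rho\,X$ has $[N,X]=0$. Note that it is $M-\sqrt\rho\,\beta$, not $\widetilde B-\sqrt\rho\,\beta$, that is orthogonal to $\beta$, since $\ud[\widetilde B,\beta]_t=\sqrt\rho\,C_t^{-1/2}\,\ud t$.

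Your worry that orthogonality alone is not enough is justified for general martingales. Here, however, $X$ is a Brownian motion in the ambient filtration, and the paper closes the step with the martingale representation theorem. Each $Z\in L^2(\sigma(X_s:s\le T))$ equals $\E Z+\int_0^T h_s\,\ud X_s$ with $h$ predictable, so $\E[N_TZ]=\E\int_0^T h_s\,\ud[N,X]_s=0$. Hence $\E[M_T\mid X]=\sqrt\rho\,X_T$, and conditional Jensen finishes.

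What the paper's route buys is an explicit martingale coupling that handles every convex $F$ at once with no regularity assumption on $F$. The price is enlarging the probability space, inverting $C_t$, and invoking the representation theorem.

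Your route needs only It\^o's formula, $C_t\succeq\rho I_n$, and convexity of $P_sF$. Its real technical point is growth control, not mollification: with $M$ only in $L^2$, you need growth control to make the $\ud M$-integral a true martingale. Do it in two steps.
\begin{enumerate}
\item Take $F$ convex and Lipschitz. Then $\nabla P_sF$ is bounded, so the stochastic integral is an $L^2$ martingale. Apply It\^o on $[0,T-\epsilon]$ and let $\epsilon\to0$, using $|P_sF-F|\le\Lip(F)\sqrt{s}\,\E\|G\|_2$ with $G\sim\gamma_n$.
\item Pass to general convex $F$ through the increasing Lipschitz convex minorants $F_k(x)=\inf_y\{F(y)+k\|x-y\|_2\}\uparrow F$ and monotone convergence.
\end{enumerate}
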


\begin{proof}
We work on a probability space enlarged to contain an independent standard Brownian motion $\{W_t\}_{0 \leq t \leq T}$ in $\R^n$, indexed by $[0, T]$. Define the continuous local martingale
\[
\ud X_t
= \sqrt{\rho}\,C_t^{-1}\, \ud M_t
+
 \Big(I_n-\rho C_t^{-1}\Big)^{1/2} \, \ud W_t.
\]
The quadratic variation satisfies
\[
\ud [X]_t
=
\rho C_t^{-1} \ud [M]_t C_t^{-1}
+
\Big(I_n-\rho C_t^{-1}\Big) \, \ud t 
=
I_n \, \ud t.
\]
Thus, $\{X_t\}_{0 \leq t \leq T}$ is a standard Brownian motion on $[0, T]$; in particular, it is also a continuous martingale.
Furthermore, the quadratic covariation between $\{M_t\}_{t \leq T}$ and $\{X_t\}_{t \leq T}$ satisfies
\[
\ud[M,X]_t
=
C_t\Big(\sqrt{\rho}\,C_t^{-1}\Big)^{\mathsf T} 
\ud t 
=
\sqrt{\rho}\,I_n \ud t.
\]
Therefore, we can write $M_t = N_t + \sqrt{\rho} X_t$, 
for some continuous martingale $\{N_t\}_{0 \leq t \leq T}$ with $[N,X]_t=0$. 
Now, consider the $\sigma$-field
\[
\cG_t = \sigma(X_s:0\leq s\leq t).
\]
We claim that $\E[N_T \mid  \cG_T]=0$. 
To prove this, let $Z\in L^2(\cG_T)$ be an arbitrary scalar random variable. By the martingale
representation theorem, there exists a $\cG_t$-predictable, $\R^n$-valued process $h_t = (h_t^1, \dots, h_t^n)$ such that
\[
Z=\E Z+ \sum_{j=1}^n \int_0^T h_s^j \,dX_s^j.
\]
Hence, for $i \in \{1, 2, \dots, n\}$, using $\E N_T^i = 0$,  
\[
\E[N_T^i Z] = \E[N_t^i (Z-\E Z)]
=
\sum_{j=1}^n \E \int_0^T h_s^{j}\,d[N^i,X^j]_s
=
0.
\]
Since $Z \in L^2(\cG_T)$ was arbitrary, $\E[N_T \mid \cG_T] = 0$.  Therefore, $\E[M_T\mid\mathcal{G}_T]=\sqrt{\rho}\,X_T$.  
This implies, by Jensen's inequality, that 
\[
F(\sqrt{\rho} X_T) = F(\E[M_T \mid \cG_T]) \leq \E[F(M_T) \mid \cG_T],
\]
almost surely. Passing to the full expectation yields the claim.
\end{proof}

Let $\phi(u) = \max\{u, 1/2\}$. We write the eigendecompositions 
\[
A_t = \sum_{i=1}^n \lambda_i^{(t)} u_i^{(t)} \otimes u_i^{(t)}, \quad \mbox{and} \quad 
\Sigma_t 
= \sum_{i=1}^n \phi\big(\lambda_i^{(t)}\big) u_i^{(t)} \otimes u_i^{(t)},
\]
where $u_i^{(t)}$ and $\lambda_i^{(t)}$ denote eigenvector-eigenvalue pairs of the covariance matrix $A_t$.
We define 
\begin{subequations}
\begin{equation}
\label{eqn:martingale-truncated}    
v_t = \int_0^t \, \Sigma_s \, \ud B_s,
\end{equation}
which is a continuous martingale. Note that 
it also holds that 
\begin{equation}
\label{eqn:martingale-not-truncated}    
a_t = \int_0^t A_s \, \ud B_s.
\end{equation}
\end{subequations}
Consider the stopping time 
\[
\tau = \inf\{t \geq 0 : \lambda_{n - m + 1}(A_t) < 1/2 \}. 
\]
For any $t \geq 0$, it holds that
\begin{equation}
\label{ineq:operator-norm-bound-on-gap}
\opnorm{\Sigma_t - A_t} \leq \frac{1}{2}.
\end{equation}
Hence, for $t \leq \tau$, it evidently holds that 
\begin{equation}
\label{ineq:rank-bound-on-gap}
\rank(\Sigma_t - A_t) \leq m, 
\quad \mbox{and} \quad
\fronorm{\Sigma_t - A_t} \leq \frac{\sqrt{m}}{2}.
\end{equation}
By It\^o's isometry,
\begin{align}
\E \|v_T - a_T\|_2^2 &= 
\E \int_0^{T \wedge \tau} \fronorm{\Sigma_t - A_t}^2 
\, \ud t + 
\E \int_{T \wedge \tau}^T\fronorm{\Sigma_t - A_t}^2 \, \ud t \nonumber \\ 
&\leq \frac{m T}{4} + \E \int_0^T \1\{\tau \leq t\} \fronorm{\Sigma_t - A_t}^2 \, \ud t 
\nonumber  \\ 
&\leq \frac{mT}{4} + 
\frac{n T}{4} \Prob\{\tau \leq T\} 
\nonumber \\ 
&\leq \frac{m T}{2}.
\label{ineq:final-bound-on-gap}
\end{align}
Above, we applied
the bounds in~\eqref{ineq:operator-norm-bound-on-gap}
and~\eqref{ineq:rank-bound-on-gap}, and then the assumption
in~\eqref{eqn:control-of-eigen-val-event}.
Hence, the triangle inequality and inequality~\eqref{ineq:final-bound-on-gap} yield
\begin{equation}
\label{ineq:bound-on-v-t}
\E \gauge{v_T}\leq \E \gauge{a_T} + \E \gauge{v_T -a_T} 
\leq \E \gauge{a_T} + \Lip\big(\gauge{\cdot} \big)\frac{\sqrt{m T}}{\sqrt{2}}.
\end{equation}
Now, we can apply~\Cref{lem:comparison-result} to $F = \gauge{\cdot}$ and $M_t = v_t$. Note that $\ud [v]_t = \Sigma_t^2 \, \ud t$ and $\Sigma_t^2 \succeq \tfrac{1}{4} I_n$, so we may take $\rho = 1/4$. Using~\eqref{ineq:bound-on-v-t}, we obtain
\begin{equation*}
\frac{1}{2} \sqrt{T} \E_{\gamma_n} \gauge{G} 
= \frac{1}{2} \E \gauge{B_T} \leq \E \gauge{v_T} \leq 
\E \gauge{a_T} + \Lip\big(\gauge{\cdot} \big)\frac{\sqrt{m T}}{\sqrt{2}}
\leq 
\E_\mu \gauge{X} + \Lip\big(\gauge{\cdot} \big)\frac{\sqrt{m T}}{\sqrt{2}}.
\end{equation*}
The last inequality follows from Jensen's inequality (\eg~\cite[ineq.~(55)]{BizKla26}).
Rearranging yields the claimed result.

\subsection{Proof of~\texorpdfstring{\Cref{prop:eigen-stop-upper}}{the eigenvalue stopping-time estimate}}
\label{sec:proof-eigen-stop-upper}

The next lemma bounds the Poincar\'{e} constants of the measures arising from stochastic localization on 
$\gamma_{\delta, K}$. 

\begin{lemma}
\label{lem:computation-for-conditional-measure}
Let $\mu = \gamma_{\delta, K}$, where $\delta > 0$ and $K \in \Bodies$. With probability $1$, the stochastic localization process satisfies
\[
\opnorm{A_t} = \opnorm{\Cov(\mu_t)} 
\leq \frac{1}{\delta^2 + t} \leq \frac{1}{\delta^2}
\quad \mbox{for all}~t \geq 0.
\]    
\end{lemma}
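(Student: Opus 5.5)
The plan is to identify $\mu_t$ explicitly as a conditioned Gaussian and then use the covariance form of the Brascamp--Lieb inequality, which is available for measures that are strongly log-concave, including restrictions to convex sets.

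First I would compute the density of $\mu_t$. Since $\mu = \gamma_{\delta,K}$ has density proportional to $\e^{-\delta^2\|x\|_2^2/2}\1_K(x)$, the definition of $\mu_{t,\theta}$ gives
\[
\frac{\ud \mu_{t,\theta}}{\ud x}(x) \propto \1_K(x)\, \exp\Big(\langle \theta, x\rangle - \frac{\delta^2+t}{2}\|x\|_2^2\Big)
\propto \1_K(x)\,\exp\Big(-\frac{\delta^2+t}{2}\Big\|x - \frac{\theta}{\delta^2+t}\Big\|_2^2\Big).
\]
So, for every $t\ge 0$ and every $\theta\in\R^n$, $\mu_{t,\theta} = \cL(Z\mid\{Z\in K\})$ with $Z\sim N\big(\theta/(\delta^2+t),\,(\delta^2+t)^{-1}I_n\big)$. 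This holds deterministically in $\theta$. In particular it holds for $\theta=\theta_t$ simultaneously for all $t$ on the full-probability event where the SDE solution exists, which gives the ``with probability $1$, for all $t$'' form of the statement.

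Next I would apply \Cref{lem:poin-const-of-conditioned-gaussians} with $z=\theta_t/(\delta^2+t)$ and $\tau^2=(\delta^2+t)^{-1}$. This gives $\PoinConst(\mu_t)\le (\delta^2+t)^{-1}$.

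Then I would test the Poincaré inequality on linear functionals $f(x)=\langle x,u\rangle$ with $\|u\|_2=1$. Their gradients have norm $1$, so
\[
\langle A_t u,u\rangle=\Var_{\mu_t}(\langle X,u\rangle)\le \PoinConst(\mu_t)\le \frac{1}{\delta^2+t}.
\]
Since $A_t\succeq 0$, taking the supremum over unit vectors $u$ bounds $\opnorm{A_t}$. The final inequality $\frac{1}{\delta^2+t}\le\frac{1}{\delta^2}$ is trivial because $t\ge0$.

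There is no serious obstacle. The only points needing care are the completion of the square, which identifies the center and variance of the Gaussian, and the observation that the bound is pathwise in $\theta$. The latter removes any need for probabilistic arguments beyond the existence of the process.
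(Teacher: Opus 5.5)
Your proposal is correct and matches the paper's proof: you complete the square to identify $\mu_t$ as $N\big(\theta_t/(\delta^2+t),(\delta^2+t)^{-1}I_n\big)$ conditioned on $K$, apply \Cref{lem:poin-const-of-conditioned-gaussians} to get $\PoinConst(\mu_t)\le (\delta^2+t)^{-1}$, and test the Poincar\'e inequality on linear functionals to bound $\opnorm{A_t}$. Your remark that the identification holds for every $\theta$, and hence along the path $\theta=\theta_t$, is a useful explicit justification of the ``with probability $1$, for all $t$'' clause, which the paper leaves implicit.
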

\begin{proof}
At time $t$, the measure $\mu_t$ has density relative to $\mu$ given by
\[
\frac{\ud \mu_t}{\ud \mu}(x) \propto \e^{\langle \theta_t, x\rangle - t\|x\|_2^2/2}.
\]
In particular, since $\mu = \gamma_{\delta, K}$, 
completing the square shows that for $X_t \sim \mu_t$,
\[
\cL(X_t) = \cL(Z_t \mid \{ Z_t \in  K \})
\quad \mbox{where} \quad 
Z_t \sim N\Big(\frac{\theta_t}{\delta^2 + t}, \frac{1}{\delta^2 + t} I_n\Big).
\]
By~\Cref{lem:poin-const-of-conditioned-gaussians}, for all $t \geq 0$, we have
\[
\PoinConst(\mu_t) \leq \frac{1}{\delta^2 + t} \leq \frac{1}{\delta^2}. 
\]    
Hence, by considering linear functionals, $\opnorm{A_t} \leq \PoinConst(\mu_t) \leq 1/\delta^2$. 
\end{proof}

We will need the following lemma, which controls the third-order tensors of a log-concave random vector. 
\begin{lemma}
\label{lem:mean-zero-log-ccv}
Let $Z \sim \mu$ be a mean-zero random vector in $\R^n$. Define the tensor 
\begin{equation}\label{eqn:three-tensor}
H_{\theta} = \E\Big[\langle Z, \theta \rangle\,  Z \otimes Z \Big], \quad \theta \in \sphere.
\end{equation}
It follows that
\[
\opnorm{\sum_{i=1}^n H_{e_i}^2} \leq 4\, \PoinConst(\mu) \, \opnorm{\Cov(\mu)}^2.
\]
\end{lemma}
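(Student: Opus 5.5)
The plan is to reduce the operator norm to a quadratic form and then bound one scalar covariance using Cauchy--Schwarz together with the Poincar\'e inequality.

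\emph{Step 1: reduce to a Frobenius norm.} Each $H_{e_i}$ is symmetric, so $\sum_i H_{e_i}^2$ is positive semidefinite. Its operator norm is therefore $\sup_{v \in \sphere} \sum_i \|H_{e_i} v\|_2^2$. Fix $v \in \sphere$ and write $Z_i = \langle Z, e_i\rangle$. The $j$th coordinate of $H_{e_i}v$ is $\E[Z_i Z_j \langle Z, v\rangle]$. Hence
\[
\sum_{i=1}^n \|H_{e_i} v\|_2^2 = \sum_{i,j} \big(\E[Z_iZ_j\langle Z,v\rangle]\big)^2 = \fronorm{M_v}^2, \qquad M_v := \E\big[\langle Z,v\rangle\, Z\otimes Z\big] = H_v .
\]
So it suffices to show $\fronorm{M_v}^2 \leq 4\PoinConst(\mu)\opnorm{\Cov(\mu)}^2$ for every unit vector $v$.

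\emph{Step 2: dualize and apply Cauchy--Schwarz.} Since $M_v$ is symmetric, $\fronorm{M_v} = \sup \E[\langle Z,v\rangle \langle BZ, Z\rangle]$, with the supremum over symmetric $B$ satisfying $\fronorm{B} = 1$. Because $\E\langle Z, v\rangle = 0$, we may replace $\langle BZ,Z\rangle$ by its centered version $\langle BZ,Z\rangle - \E\langle BZ,Z\rangle$ without changing the expectation. Cauchy--Schwarz then gives
\[
\E[\langle Z,v\rangle\langle BZ,Z\rangle] \leq \sqrt{\Var\langle Z,v\rangle}\,\sqrt{\Var\langle BZ,Z\rangle}.
\]
The first factor is at most $\opnorm{\Cov(\mu)}^{1/2}$.

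\emph{Step 3: Poincar\'e for the quadratic form.} Apply the Poincar\'e inequality to $f(z) = \langle Bz,z\rangle$, whose gradient is $2Bz$. This gives
\[
\Var\langle BZ,Z\rangle \leq 4\PoinConst(\mu)\,\E\|BZ\|_2^2 = 4\PoinConst(\mu)\,\tr\big(B\,\Cov(\mu)\,B\big),
\]
where we used that $Z$ has mean zero, so $\E[Z \otimes Z] = \Cov(\mu)$. Moreover $\tr(B\Cov(\mu)B) \leq \opnorm{\Cov(\mu)}\fronorm{B}^2 = \opnorm{\Cov(\mu)}$.

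Combining the two factors gives $\fronorm{M_v} \leq 2\sqrt{\PoinConst(\mu)}\,\opnorm{\Cov(\mu)}$. Squaring and taking the supremum over $v$ yields the lemma.

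The only delicate points are bookkeeping. In Step 1 one must recognize that the quadratic form collapses exactly to $\fronorm{H_v}^2$ using the full symmetry of the third-moment tensor. In Step 2 one must use the mean-zero assumption to center the quadratic form before invoking Cauchy--Schwarz. If $\PoinConst(\mu) = \infty$ there is nothing to prove, so we may assume it is finite, which also makes all moments involved finite.
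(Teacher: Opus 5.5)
Your proposal is correct and follows essentially the same route as the paper. Both reduce $\opnorm{\sum_i H_{e_i}^2}$ to $\sup_\theta \fronorm{H_\theta}^2$ via the symmetry of the third-moment tensor, then center, apply Cauchy--Schwarz, and use the Poincar\'e inequality on the quadratic form $z \mapsto \langle Bz, z\rangle$. The only cosmetic difference is that the paper takes $B = H_\theta$ directly and rearranges the self-bounding inequality $\fronorm{H_\theta}^4 \leq 4\PoinConst(\mu)\opnorm{\Cov(\mu)}^2\fronorm{H_\theta}^2$, whereas you use the dual characterization of the Frobenius norm with normalized $B$.
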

\begin{proof}
Note that $H_{e_i} \theta = H_\theta e_i$ for all $\theta \in \sphere$ and all $i \in \{1, 2, \dots, n\}$. Therefore, 
\[
\opnorm{\sum_{i=1}^n H_{e_i}^2} = \sup_{\theta \in \sphere} \sum_{i=1}^n \langle H_\theta^2 e_i, e_i\rangle = \sup_{\theta \in \sphere}\fronorm{H_\theta}^2. 
\] 
For $\theta \in \sphere$, since $Z$ is centered, we have
\begin{align*}
\fronorm{H_\theta}^4
= \E[\langle Z, \theta \rangle \langle H_\theta Z, Z \rangle]^2 = 
\E[\langle Z, \theta \rangle (\langle H_\theta Z, Z \rangle - \E\langle H_\theta Z, Z \rangle) ]^2.
\end{align*}
Cauchy--Schwarz and the Poincar\'{e} inequality yield the following bound:
\begin{multline*}
\fronorm{H_\theta}^4 
\leq \langle \Cov(Z) \theta, \theta\rangle
\Var_\mu( \langle H_\theta Z, Z \rangle) 
\\ \leq
4 \PoinConst(\mu) \opnorm{\Cov(Z)} \trace(H_\theta^2 \Cov(Z)) 
\leq 4 \PoinConst(\mu) \opnorm{\Cov(Z)}^2 \fronorm{H_\theta}^2. 
\end{multline*}
Rearranging the inequality above furnishes the claim.
\end{proof}

We also require the following consequence of a reverse H\"older inequality for log-concave measures. 

\begin{lemma}[{\cite[Lemma~A.2]{BizKla26}}]
\label{lem:three-tensor-log-ccv}
    Let $Z \sim \mu$ be a mean-zero log-concave random vector in $\R^n$. Then, 
    the tensor $H_\theta$ defined in display~\eqref{eqn:three-tensor} satisfies 
    \[
    \sup_{\theta \in \sphere} \opnorm{H_\theta}^2 \leq 9 \opnorm{\Cov(\mu)}^3. 
    \]
\end{lemma}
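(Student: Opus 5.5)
The plan is to reduce the operator norm of $H_\theta$ to a one-dimensional moment computation, and then apply a sharp moment comparison for log-concave random variables.

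First, fix $\theta\in\sphere$. The matrix $H_\theta$ is symmetric, so
\[
\opnorm{H_\theta}=\sup_{u\in\sphere}\big|\langle H_\theta u,u\rangle\big| = \sup_{u\in\sphere}\big|\E\big[\langle Z,\theta\rangle\langle Z,u\rangle^2\big]\big|.
\]
Applying Cauchy--Schwarz to the right-hand side gives
\[
\big|\E\langle Z,\theta\rangle\langle Z,u\rangle^2\big|^2\leq \E\langle Z,\theta\rangle^2\cdot \E\langle Z,u\rangle^4 .
\]
The first factor is $\langle \Cov(\mu)\theta,\theta\rangle\leq\opnorm{\Cov(\mu)}$. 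It therefore suffices to show the fourth-moment bound
\[
\E\langle Z,u\rangle^4\leq 9\big(\E\langle Z,u\rangle^2\big)^2 .
\]
Once this holds, the second factor is at most $9\opnorm{\Cov(\mu)}^2$, and taking the supremum over $u$ and $\theta$ gives the claim.

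The fourth-moment bound is purely one-dimensional. By Pr\'ekopa--Leindler, the marginal $Y=\langle Z,u\rangle$ of a log-concave vector is a mean-zero log-concave random variable on $\R$. The needed statement is that the kurtosis of a mean-zero log-concave real random variable is at most $9$. This is the sharp reverse H\"older/kurtosis bound for log-concave densities, and it is attained by the centered exponential distribution: if $Y=E-1$ with $E\sim\mathrm{Exp}(1)$, then $\E Y^2=1$ and $\E Y^4=9$. I would quote this from the literature on moments of log-concave variables, namely Borell's lemma in its sharp form, or the Karlin--Proschan--Barlow result that log-concave laws are dominated in this moment sense by the exponential law.

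The main obstacle is obtaining the constant $9$ rather than merely some universal constant. A crude Borell-lemma argument only yields $\E Y^4\leq C(\E Y^2)^2$ with an unspecified $C$. For the paper's later use any universal constant would suffice, so as a fallback I would accept this crude bound and replace $9$ by a universal $C$.

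To get $9$ honestly, I would first try the standard extremal-density argument. Suppose a log-concave density $f$ is compared with an exponential or two-sided-exponential density $g$ having the same mean and variance. Log-concavity forces $f-g$ to change sign in a controlled pattern, at most twice up to translation. Combined with the matching mean and variance constraints, this sign pattern should imply $\int y^4(f-g)\leq 0$. The sign-change count is the delicate step, and it is where I would expect most of the work to go.
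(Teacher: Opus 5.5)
Your Cauchy--Schwarz reduction is correct. It also has the shape the paper indicates: the paper gives no proof of its own, citing \cite[Lemma~A.2]{BizKla26} and calling the lemma a consequence of a reverse H\"older inequality for log-concave measures. After the reduction, everything hinges on $\E Y^4\le 9(\E Y^2)^2$ for a centered log-concave $Y$ on $\R$. That inequality is true, but neither of your justifications for it works as written.

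\textbf{Your citations.} The sharp form of Borell's lemma and the IFR/reliability moment bounds you mention concern nonnegative log-concave variables, and hence symmetric ones via $|Y|$. There they give $\E Y^4\le 6(\E Y^2)^2$. The centered exponential has ratio $9>6$, so these statements do not transfer to centered variables. Applying them to $Y_+$ and $Y_-$ separately does not directly recover the constant $9$.

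\textbf{Your sign-change sketch.} The count is wrong. Take $g$ a one-sided exponential with the same mass, mean and variance as $f$. Then $f$ may put mass to the left of the support of $g$, so $f-g$ has the pattern $+,-,+,-$: three sign changes, not two. This only yields $\int y^3(f-g)\le 0$, i.e.\ $|\E Y^3|\le 2(\E Y^2)^{3/2}$. It says nothing about $\int y^4(f-g)$, because $y^4$ is not $3$-convex on $\R$.

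\textbf{The missing idea: match one more moment with a two-sided asymmetric exponential.}
\begin{enumerate}
\item Using the skewness bound just obtained, continuity lets you choose $a,b\ge 0$ with $a^2+b^2=\E Y^2$ and $2(a^3-b^3)=\E Y^3$.
\item Let $g$ be the density of $aE_1-bE_2-(a-b)$, where $E_1,E_2$ are i.i.d.\ standard exponentials. Then $g$ is centered and matches $f$ in mass, mean, variance and third moment, so $f-g$ is orthogonal to all cubics.
\item $\log g$ is affine on each side of its kink (or $-\infty$ on one side if $a$ or $b$ vanishes). Hence $\log f-\log g$ is concave on each side, and $\{f>g\}$ is a union of at most two intervals. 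So $f-g$ has at most four sign changes.
\item Orthogonality to cubics forces exactly four sign changes $t_1<\dots<t_4$, with pattern $-,+,-,+,-$. Therefore $\int y^4(f-g)\,dy=\int \prod_{i=1}^4(y-t_i)\,(f-g)(y)\,dy\le 0$.
\item The cumulants of $aE_1-bE_2$ give $\E_g Y^4/(\E_g Y^2)^2=3+6(a^4+b^4)/(a^2+b^2)^2\le 9$.
\end{enumerate}
With this argument, or a correct reference for the centered case, your proof is complete. As you note, the universal-constant fallback already suffices for the paper: the $9$ enters only through the bound $[N]_t\le 9\delta^{-6}t$ in the eigenvalue stopping-time estimate.
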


We also recall the following maximal inequality for continuous martingales~\cite[Appendix~B]{ShoWel09}.
\begin{lemma}
\label{lem:maximal-inequality-for-cts-martingales}
Let $T > 0$ and let
$\{M_t\}_{t \geq 0}$ be a continuous martingale with $M_0 = 0$. If the quadratic variation satisfies $[M]_T \leq b$ almost surely, then
\[
\Prob\Big\{\sup_{t \in [0, T]} M_t \geq a\Big\} \leq 
\exp\Big\{-\frac{a^2}{2b}\Big\},
\]
for any $a > 0$. 

\end{lemma}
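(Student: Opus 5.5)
The plan is to use the classical exponential supermartingale argument, a Chernoff bound along the path. Fix $\lambda > 0$ and set
\[
Z_t = \exp\Big(\lambda M_t - \frac{\lambda^2}{2}[M]_t\Big), \qquad t \geq 0.
\]

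\emph{Step 1: $Z$ is a supermartingale.} By It\^o's formula, $\ud Z_t = \lambda Z_t \, \ud M_t$. Hence $Z$ is a continuous local martingale with $Z_0 = 1$. Since it is nonnegative, Fatou's lemma shows that $Z$ is a supermartingale. In particular, $\E Z_\sigma \leq 1$ for every bounded stopping time $\sigma$, by optional stopping for nonnegative supermartingales.

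\emph{Step 2: stop at the first passage.} Let $\sigma = \inf\{t \geq 0 : M_t \geq a\} \wedge T$, which is a stopping time bounded by $T$. On the event $E = \{\sup_{t \in [0,T]} M_t \geq a\}$, continuity of paths gives $M_\sigma = a$. Since $[M]$ is nondecreasing, we also have $[M]_\sigma \leq [M]_T \leq b$. Therefore, on $E$,
\[
Z_\sigma \geq \e^{\lambda a - \lambda^2 b/2}.
\]

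\emph{Step 3: Markov's inequality and optimization.} Combining the two steps gives
\[
\e^{\lambda a - \lambda^2 b/2}\, \Prob(E) \leq \E Z_\sigma \leq 1,
\]
so $\Prob(E) \leq \exp(-\lambda a + \lambda^2 b/2)$. Choosing $\lambda = a/b$ yields the bound $\exp\{-a^2/(2b)\}$.

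\emph{Main technical point.} The only delicate step is justifying $\E Z_\sigma \leq 1$ without assuming $Z$ is a true martingale. The nonnegative-supermartingale route handles this. An alternative is to localize with $\sigma_k = \inf\{t : |M_t| + [M]_t \geq k\}$, apply $\E Z_{\sigma \wedge \sigma_k} = 1$, and pass to the limit with Fatou's lemma. A third option is the Dambis--Dubins--Schwarz theorem: write $M_t = W_{[M]_t}$ for a Brownian motion $W$, and use $\sup_{t\le T} M_t \le \sup_{s \le b} W_s$ together with the reflection principle. That route gives the bound $2\Prob(W_b \geq a) \le \e^{-a^2/(2b)}$ only up to a factor $2$ unless one uses a sharper Gaussian tail, so the exponential supermartingale argument is preferable.
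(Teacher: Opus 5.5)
The paper gives no proof of this lemma; it is quoted from \cite[Appendix~B]{ShoWel09}. Your exponential-supermartingale argument is the standard proof, and it is correct and complete.

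The key points all hold:
\begin{enumerate}
\item $Z_t=\exp(\lambda M_t-\tfrac{\lambda^2}{2}[M]_t)$ satisfies $\ud Z_t=\lambda Z_t\,\ud M_t$. It is therefore a nonnegative continuous local martingale, hence a supermartingale, so $\E Z_\sigma\le 1$ for the bounded stopping time $\sigma$.
\item On $\{\sup_{t\le T}M_t\ge a\}$, continuity and $M_0=0<a$ give $M_\sigma=a$, while $[M]_\sigma\le [M]_T\le b$.
\item Taking $\lambda=a/b$ then yields $\exp\{-a^2/(2b)\}$.
\end{enumerate}

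A few small remarks. Your argument uses only the local martingale property. That suits the paper's application, where the lemma is applied to $-N_t$ with $N_t$ defined as a stochastic integral.

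In the degenerate case $b=0$ the choice $\lambda=a/b$ is unavailable. Instead let $\lambda\to\infty$ in $\Prob(E)\le \e^{-\lambda a+\lambda^2 b/2}$, or note directly that $M\equiv 0$ on $[0,T]$.

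Finally, your aside on the Dambis--Dubins--Schwarz route is accurate. With the standard tail bound $\Prob(W_b\ge a)\le \tfrac12\e^{-a^2/(2b)}$ for $a\ge 0$, the reflection principle also gives exactly the stated constant. So that route works too, at the cost of invoking heavier machinery than your Chernoff-type argument.
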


We now proceed to prove~\Cref{prop:eigen-stop-upper}. We use $\llangle X, Y \rrangle = \tr(XY)$ for the trace inner product of real symmetric $n \times n$ matrices $X$ and $Y$. Define the smoothing function by
\[
f_{\beta, m}(A) = \frac{-1}{\beta} \log \tr\Big(\frac{1}{m}\e^{-\beta A}\Big).
\] 
Note that for a real positive semidefinite matrix $A$, we have
\[
f_{\beta, m}(A) \leq \frac{-1}{\beta}\log \frac{1}{m} \sum_{i=1}^m \e^{-\beta \lambda_{n-m+i}(A)} \leq \lambda_{n-m+1}(A). 
\]
At $t = 0$, $A_0 = I_n$, and hence 
\[
f_{\beta, m}(A_0) = 1 - \frac{\log(n/m)}{\beta}.
\]
For $\theta \in \sphere$, define
\[
H_{\theta,t} = \E_{X \sim \mu_t} \Big[\langle X - a_t, \theta\rangle (X-a_t) \otimes (X-a_t)\Big].
\]
A direct computation shows that $\nabla f_{\beta, m}(A_t) \succeq 0$ and $\trace(\nabla f_{\beta, m}(A_t)) = 1$. Hence,
\[
\llangle \nabla f_{\beta, m}(A_t), A_t^2 \rrangle \leq \opnorm{A_t^2} \leq \opnorm{A_t}^2. 
\]
Additionally, by the same computation as in~\cite[Lemma~A.1]{BizKla26},
\[
\sum_{i=1}^n\nabla^2 f_{\beta, m}(A_t)[H_{e_i, t}, H_{e_i, t}] \geq - \beta \Big \llangle \nabla f_{\beta, m}(A_t), \sum_{i=1}^n H_{e_i, t}^2 \rrangle 
\geq - \beta \, \opnorm{\sum_{i=1}^n H_{e_i, t}^2}.
\]
Hence, by It\^o's formula,
\begin{align}
\label{ineq:lower-differential-minus}
\ud f_{\beta,m}(A_t) 
&=\Big\llangle \nabla f_{\beta,m}(A_t), \sum_{i=1}^n H_{e_i, t} \, \ud B_{i,t} \Big\rrangle  - 
\Big\llangle \nabla f_{\beta,m}(A_t), A_t^2 \Big\rrangle \, \ud t + 
\frac{1}{2} \sum_{i=1}^n \nabla^2 f_{\beta, m}(A_t)[H_{e_i, t}, H_{e_i, t}] \, \ud t  
\nonumber \\
&\geq  
\Big\llangle \nabla f_{\beta,m}(A_t), \sum_{i=1}^n H_{e_i, t} \, \ud B_{i,t}\Big\rrangle - \bigg(\opnorm{A_t}^2 + 
\frac{\beta}{2} \opnorm{\sum_{i=1}^n H_{e_i, t}^2}\bigg)\, \ud t.
\end{align}
By~\Cref{lem:computation-for-conditional-measure,lem:mean-zero-log-ccv}, 
\begin{equation*}
\ud f_{\beta,m}(A_t)  
\geq 
\Big\llangle \nabla f_{\beta,m}(A_t), \sum_{i=1}^n H_{e_i, t} \, \ud B_{i,t}\Big\rrangle - \bigg(1 + 
\frac{2\beta}{\delta^2} \bigg)\frac{1}{\delta^4} \, \ud t.
\end{equation*}
Define the continuous martingale 
\[
N_t = \int_0^{t} 
\Big \llangle \nabla f_{\beta,m}(A_s), \sum_{i=1}^n H_{e_i, s} \, \ud B_{i,s}\Big\rrangle.
\]
For $t\leq T$, the quadratic variation of $N_t$ satisfies the following bound:
\[
[N]_t = \int_0^t \sum_{i=1}^n\llangle \nabla f_{\beta,m}(A_s), H_{e_i, s}\rrangle^2 \,\ud s \leq  9 \int_0^{t} \opnorm{A_s}^3 \, \ud s \leq \frac{9}{\delta^6} T. 
\]
Above, we used Cauchy--Schwarz, \Cref{lem:three-tensor-log-ccv}, and the log-concavity of $\gamma_{\delta, K}$ to conclude
\[
\sum_{i=1}^n\llangle \nabla f_{\beta,m}(A_s), H_{e_i, s}\rrangle^2 
= 
\sup_{\theta \in \sphere} \Big \llangle \nabla f_{\beta,m}(A_s),  H_{\theta, s}\Big\rrangle^2
\leq 
\sup_{\theta \in \sphere} \opnorm{H_{\theta, s}}^2 \leq 9 \opnorm{A_s}^3. 
\]
In particular, since $\delta \in (0, 1]$, setting $C = 9$ gives
\begin{equation}
\ud f_{\beta,m}(A_t)  
\geq 
\ud N_t - C \delta^{-6} (1 + \beta)\, \ud t, \quad 
[N]_t \leq C \delta^{-6} t.
\label{ineq:final-diff-lower-minus}
\end{equation}
Integrating~\eqref{ineq:final-diff-lower-minus}, 
for $t \leq T$, we have
\begin{equation*}
f_{\beta,m}(A_{t}) \geq N_t + 1 - \frac{\log(n/m)}{\beta} - C \delta^{-6} T  - C \delta^{-6} \beta T.
\end{equation*}
We can take 
\[
\beta = c_1 \log(\e n/m), \quad T = \frac{c_2}{\log(\e n/m)}\delta^6.
\]
By taking $c_1$ sufficiently large and $c_2$ sufficiently small, we have, for $t \leq T$,
\[
\lambda_{n-m+1}(A_t) \geq f_{\beta, m}(A_t) \geq N_t + \frac{3}{4}.
\]
Therefore, by~\Cref{lem:maximal-inequality-for-cts-martingales}, 
\[
\Prob\{\exists t \in [0, T] : \lambda_{n-m+1}(A_t) \leq \tfrac{1}{2}\} 
\leq 
\Prob\{\exists t \in [0, T] : -N_t \geq \tfrac{1}{4}\} 
\leq \e^{-C' \delta^6/T}.
\]
If $c_2$ is sufficiently small, then 
we have 
\[
\Prob\{\exists t \in [0, T] : \lambda_{n-m+1}(A_t) \leq \tfrac{1}{2}\} 
\leq \Big(\frac{m}{\e n}\Big)^{C'/c_2} \leq \frac{m}{n},
\]
as needed. 

\subsection{Proof of~\texorpdfstring{\Cref{thm:comparison-theorem}}{the comparison theorem}}
Since $K^\circ$ is in $\Bobkov$-position, the measure $\gamma_{K^\circ}$ satisfies $\Cov(\gamma_{K^\circ}) = \alpha_{K^\circ}^2 I_n$ by~\Cref{prop:bobkov}\ref{item:scalar-covariance}. Let $X \sim \gamma_{K^\circ}$ and $Y = \alpha_{K^\circ}^{-1} X$. It holds that $\cL(Y) = \gamma_{\alpha_{K^\circ}, \alpha_{K^\circ}^{-1} K^\circ}$, which is centered, log-concave, isotropic, and supported on $\alpha_{K^\circ}^{-1} K^\circ$. Therefore, combining \Cref{prop:from-eigen-control-to-comparison-theorem,prop:eigen-stop-upper}, for any positive integer $m \leq n$ and any gauge $\gauge{\cdot}$, we obtain
\begin{align}
\E_{\gamma_n} \gauge{G} &\leq \frac{2}{\alpha_{K^\circ}^3 \sqrt{C_{\ref{prop:eigen-stop-upper}}}} \sqrt{\log(\e n/m)}\,  \E \gauge{Y} + \Lip\big(\gauge{\cdot}\big) \sqrt{2m} \nonumber  \\ 
&= \ \frac{2}{\alpha_{K^\circ}^4\sqrt{C_{\ref{prop:eigen-stop-upper}}}} \sqrt{\log(\e n/m)}\,  \E \gauge{X} + \Lip\big(\gauge{\cdot}\big) \sqrt{2m}  \nonumber \\ 
&\leq C' \Big(\sqrt{\log(\e n/m)} \E \gauge{X}  + \Lip\big(\gauge{\cdot}\big) \sqrt{m} \Big),
\label{ineq:m-final-bound-first}
\end{align}
where we used~\Cref{prop:bobkov}\ref{item:bound-on-alpha} to bound $\alpha_{K^\circ}^2$ below by $c_{\ref{prop:bobkov}}$ and we took
\[
C' = \max\Big\{\frac{2}{c_{\ref{prop:bobkov}}^2\sqrt{C_{\ref{prop:eigen-stop-upper}}}}, \sqrt{2}\Big\}. 
\]

\section{Proof of~\texorpdfstring{\Cref{thm:covering-numbers}}{the covering-number theorem}}

We will prove the following estimate in Bobkov's position. For brevity, we denote the Euclidean covering entropy of $K$ by
\[
H_K(r) = \log N(K, r B^n_2), \quad r > 0.
\]
\begin{theorem}
\label{thm:metric-entropy-bobkov}
There is a constant $C_{\ref{thm:metric-entropy-bobkov}} \in (0, \infty)$ such that for every $n \geq 1$ and every $K \in \Bodies$ with $K^\circ$ in $\Bobkov$-position and $\vol_n(K^\circ) = 1$,
\[
H_K(r) \leq C_{\ref{thm:metric-entropy-bobkov}}\begin{cases}
  n \log(1 + \tfrac{1}{r\sqrt{n}}) & r \in (0, n^{-1/2}) \\ 
 \tfrac{1}{r^2} \log(\e n r^2) & r \in [n^{-1/2}, \rad(K))  
 \end{cases}.
\]
\end{theorem}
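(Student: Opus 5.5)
The plan is to feed \Cref{thm:comparison-theorem} a localized support function, turn the resulting width bound into a covering bound via Sudakov's minoration, and then sum over dyadic scales. For $r>0$ take the gauge $\gauge{x}=h_{K\cap rB_2^n}(x)=\sup_{y\in K\cap rB_2^n}\langle x,y\rangle$. It is nonnegative, $1$-homogeneous, convex and $r$-Lipschitz. For $X\sim\gamma_{K^\circ}$ we have $X\in K^\circ$, so $\gauge{X}\le h_K(X)\le 1$ and hence $\E_{\gamma_{K^\circ}}\gauge{X}\le 1$. \Cref{thm:comparison-theorem} then gives, for every $m\in\{1,\dots,n\}$,
\[
w(K\cap rB_2^n)=\E_{\gamma_n} h_{K\cap rB_2^n}(G)\le C\bigl(\sqrt{\log(\e n/m)}+r\sqrt m\bigr).
\]
For $r\ge n^{-1/2}$ we choose $m=\lceil r^{-2}\rceil\le n$, which yields $w(K\cap rB_2^n)\le C\sqrt{\log(\e n r^2)}$.

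Next we convert widths into covering numbers. Since $K$ is symmetric, for any $x$ we have $(K-x)\cap 2rB_2^n\subset (K-K)\cap 2rB_2^n=2(K\cap rB_2^n)$. The standard sub-multiplicativity argument then gives
\[
H_K(r)\le H_K(2r)+\sup_x \log N\bigl((K-x)\cap 2rB_2^n, rB_2^n\bigr)\le H_K(2r)+\log N\bigl(K\cap rB_2^n,\tfrac r2 B_2^n\bigr).
\]
By Sudakov's minoration, $\log N(K\cap rB_2^n,\tfrac r2B_2^n)\le C\,w(K\cap rB_2^n)^2/r^2\le C r^{-2}\log(\e n r^2)$. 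Iterating over the scales $r,2r,4r,\dots$ up to $\rad(K)$ is legitimate because $H_K(s)=0$ once $s\ge\rad(K)$, and $\rad(K)\le c_{\ref{lem:bounded-Lipschitz-constant}}$ by \Cref{lem:bounded-Lipschitz-constant}. The summands $(2^jr)^{-2}\log(\e n4^jr^2)$ decay geometrically: each step multiplies $r^{-2}$ by $1/4$ while the logarithm grows by at most an additive $\log 4$, which is dominated since $\log(\e nr^2)\ge1$. Hence the sum is $\le C r^{-2}\log(\e nr^2)$, which is the second regime.

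For $r<n^{-1/2}$ we argue volumetrically. We write $N(K,rB_2^n)\le N(K,n^{-1/2}B_2^n)\,N(n^{-1/2}B_2^n,rB_2^n)$. The first factor is $\e^{Cn}$ by the previous case at $r=n^{-1/2}$. The second is at most $(1+2/(r\sqrt n))^n$. Since $r\sqrt n<1$ gives $\log(1+1/(r\sqrt n))\ge\log2$, the $Cn$ term is absorbed, and we get $H_K(r)\le Cn\log(1+\tfrac1{r\sqrt n})$.

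I expect the main obstacle to be the bookkeeping in the dyadic summation near the top scale. There one must check that the restriction $m\le n$ and the rounding of $m$ cause no loss, and that the inclusion into $2(K\cap rB_2^n)$ is compatible with Sudakov at the right scale. The substantive input, namely the width bound for $K\cap rB_2^n$ with the improved $\log(\e n/m)$ factor, comes directly from \Cref{thm:comparison-theorem}.
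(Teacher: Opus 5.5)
Your proposal is correct and follows essentially the paper's route. The ingredients are the same: the bound $w(K\cap rB^n_2)\le C(\sqrt{\log(\e n/m)}+r\sqrt m)$ from \Cref{thm:comparison-theorem}, a Sudakov-based increment bound $H_K(r)-H_K(2r)\le C\,w(K\cap rB^n_2)^2/r^2$, dyadic telescoping up to $\rad(K)\le c_{\ref{lem:bounded-Lipschitz-constant}}$, and the volumetric step for $r<n^{-1/2}$; your inclusion $(K-x)\cap 2rB^n_2\subset 2(K\cap rB^n_2)$ stands in for the pigeonhole argument and reduction to origin-centered intersections in \Cref{lem:one-step-bound}. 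The only other difference is bookkeeping: you take $m=\lceil s^{-2}\rceil$ at each dyadic scale $s$, whereas the paper telescopes from the discrete radii $r^\star_p=\sqrt{\log(\e n/p)/p}$ with $m_j=\lceil p/4^j\rceil$ and then interpolates to general $r$, a step your parametrization avoids.
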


\subsection{Proof of~\texorpdfstring{\Cref{thm:metric-entropy-bobkov}}{the metric-entropy theorem}}

The following lemma relates the covering entropy to the Gaussian width of the intersections of $K$ with Euclidean balls.

\begin{lemma}
\label{lem:one-step-bound}
There is a constant $C_{\ref{lem:one-step-bound}} \in (0, \infty)$ such that, for every $n \geq 1$, every $K \in \Bodies$, and every $r > 0$,
\[
H_K(r/2) - H_K(r) \leq C_{\ref{lem:one-step-bound}} \,
\frac{w(K \cap rB^n_2)^2}{r^2}.
\]
\end{lemma}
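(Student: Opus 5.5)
The plan is a local covering argument. Fix a maximal $r$-separated subset of $K$; equivalently, by submultiplicativity of covering numbers,
\[
N(K, \tfrac{r}{2}B^n_2) \leq N(K, rB^n_2) \cdot \sup_{x} N\big(K \cap (x + rB^n_2), \tfrac{r}{2}B^n_2\big).
\]
Taking logarithms, it suffices to show that for every $x \in \R^n$
\[
\log N\big(K \cap (x+rB^n_2), \tfrac r2 B^n_2\big) \leq C\,\frac{w(K\cap rB^n_2)^2}{r^2}.
\]

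The first step is to recenter at the origin using central symmetry and convexity. Every pair of points $y,z\in K\cap(x+rB^n_2)$ satisfies
\[
\frac{y-z}{2} \in K \cap rB^n_2 .
\]
So, after translating by a point $z_0$ of the set (if the set is empty there is nothing to prove), we get
\[
K\cap(x+rB^n_2) - z_0 \subset 2(K\cap rB^n_2).
\]
Hence the local covering number is at most $N\bigl(2(K\cap rB^n_2), \tfrac r2 B^n_2\bigr) = N\bigl(K\cap rB^n_2, \tfrac r4 B^n_2\bigr)$.

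The second step applies Sudakov's minoration to the body $L = K\cap rB^n_2$ at scale $r/4$:
\[
\log N\big(L, \tfrac r4 B^n_2\big) \leq C\,\frac{w(L)^2}{(r/4)^2} = 16C\,\frac{w(K\cap rB^n_2)^2}{r^2}.
\]
Combining this with the first step gives the lemma with $C_{\ref{lem:one-step-bound}} = 16C$, where $C$ is the absolute constant in Sudakov's inequality.

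I expect the only delicate point to be the recentering step: the covering centers are required to lie in the set being covered. This costs only a factor-of-two change in the scale, which is absorbed by the standard inequality $N(A, sB) \leq N(A - A, \tfrac s2 B)$-type manipulations.

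The Sudakov step itself is classical and is used as a black box. If one prefers to avoid it, the dual Sudakov inequality combined with the $w$ of a Euclidean-ball intersection would also work, though less directly.
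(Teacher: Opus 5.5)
Your argument is correct and is essentially the paper's. Both proofs localize to a single ball $x+rB^n_2$: you do it by submultiplicativity, the paper by pigeonholing a maximal $\tfrac r2$-packing against a minimal $r$-covering. Both then recenter via $\tfrac{y-z}{2}\in K\cap rB^n_2$ and finish with Sudakov minoration.

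The paper applies Sudakov to the separated points inside $K\cap(x^\star+rB^n_2)$ before recentering the width, which avoids the centers-in-the-set issue you flagged. In your order, the correct intermediate bound is $N\bigl(2(K\cap rB^n_2),\tfrac r4 B^n_2\bigr)$ rather than $N\bigl(2(K\cap rB^n_2),\tfrac r2 B^n_2\bigr)$, which only changes the constant.
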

\begin{proof}
Let $\mathcal{P}_{r/2}$ and $\mathcal{N}_r$ denote, respectively, a maximal packing at scale $r/2$ and a minimum-cardinality covering at scale $r$. Such maximal packings are also coverings, so $|\mathcal{P}_{r/2}| \geq N(K, \tfrac{r}{2} B^n_2)$. For each $x \in \mathcal{N}_{r}$, we denote
\[
N_x = \#\{y \in \mathcal{P}_{r/2} : y \in x + rB^n_2\}.
\]
Because $\mathcal{N}_r$ is a covering at scale $r$, it clearly holds that 
\[
|\mathcal{P}_{r/2}| \leq \sum_{x \in \mathcal{N}_r} N_x.
\]
Hence, by the pigeonhole principle, there exists some $x^\star \in \mathcal{N}_r \subset K$ for which
\[
N_{x^\star} \geq \frac{|\mathcal{P}_{r/2}|}{|\mathcal{N}_r|} \geq \frac{N(K, \tfrac{r}{2} B^n_2)}{N(K, r B^n_2)}.
\]
Note that 
\[
\{y \in \mathcal{P}_{r/2} : y \in x^\star + r B^n_2\} \subset K \cap (x^\star+ rB^n_2).
\]
Moreover, the collection of points on the left-hand side above is, by definition, $r/2$-separated in the Euclidean metric. Therefore, by the Sudakov minoration,
\[
w(K \cap (x^\star + rB^n_2)) \geq c  \, r \sqrt{\log N_{x^\star}} 
\geq c\, r \, \sqrt{H_K(r/2) - H_K(r)}. 
\]
Rearranging the inequality gives
\[
H_K(r/2) - H_K(r) \leq \frac{1}{c^2} \, \Big(\frac{w(K \cap(x^\star + r B^n_2))}{r}\Big)^2.
\]
Finally, note that when $K$ is centrally symmetric, the righthand side is easily seen to be maximized over $x^\star \in K$ at the origin (\eg~\cite[Lemma 8.2]{Mou25}).
\end{proof}

The next lemma yields an estimate on these intersections when $K^\circ$ is in $\Bobkov$-position.
\begin{lemma}
\label{lem:interpolation-body-bounds}
There is a constant $C_{\ref{lem:interpolation-body-bounds}} \in (0, \infty)$ such that for every $n\geq 1$,
every $K\in \Bodies$ with $K^\circ$ in $\Bobkov$-position and $\vol_n(K^\circ) = 1$,
every $r > 0$, and every $m \in \{1, 2, \dots, n\}$,
\[
w(K \cap r B^n_2) \leq C_{\ref{lem:interpolation-body-bounds}}
\Big(\sqrt{\log(\e n/m)} + r \sqrt{m} \Big).
\]
\end{lemma}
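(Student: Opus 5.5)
We apply \Cref{thm:comparison-theorem} with the gauge $\gauge{x} = h_{K\cap rB^n_2}(x) = \sup_{y \in K \cap rB^n_2}\langle x, y\rangle$. Since $K$ is centrally symmetric, $0 \in K \cap rB^n_2$, so this is indeed a gauge: it is nonnegative, positively $1$-homogeneous and convex. It is also $r$-Lipschitz, because $K\cap rB^n_2 \subset rB^n_2$. The left-hand side of \Cref{thm:comparison-theorem} is then exactly $\E_{\gamma_n}\gauge{G} = w(K\cap rB^n_2)$. The Lipschitz term contributes $C r\sqrt{m}$, which is the second summand in the claim.

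It remains to bound $\E_{\gamma_{K^\circ}} \gauge{X}$ by a universal constant. For $X \in K^\circ$ and $y \in K$ we have $\langle X, y\rangle \le 1$ by the definition of the polar. Hence $h_{K\cap rB^n_2}(X) \le h_K(X) \le 1$ for every $X$ in the support of $\gamma_{K^\circ}$, and so $\E_{\gamma_{K^\circ}}\gauge{X} \le 1$. This is the only point where the geometry enters, and it is immediate. (If one preferred not to use duality, $\rad(K)\le c_{\ref{lem:bounded-Lipschitz-constant}}$ from \Cref{lem:bounded-Lipschitz-constant} together with $\E\|X\|_2 \le \sqrt{n}\,\alpha$ would give a worse bound, so the duality argument is the right choice.)

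Combining the two estimates gives, for every $m \in \{1,\dots,n\}$,
\[
w(K\cap rB^n_2) \le C_{\ref{thm:comparison-theorem}}\Big(\sqrt{\log(\e n/m)} + r\sqrt{m}\Big),
\]
so we may take $C_{\ref{lem:interpolation-body-bounds}} = C_{\ref{thm:comparison-theorem}}$. The only thing to check is that the hypotheses of \Cref{thm:comparison-theorem} match those of the lemma: $K^\circ$ is in $\Bobkov$-position and $\vol_n(K^\circ)=1$, which is exactly what is assumed. There is no genuine obstacle here, since the substantive work was done in \Cref{thm:comparison-theorem}.
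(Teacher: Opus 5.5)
Your proposal is correct and is exactly the paper's argument: apply \Cref{thm:comparison-theorem} with the gauge $h_{K \cap rB^n_2}$, use $\Lip(h_{K\cap rB^n_2}) \leq r$, and use $X \in K^\circ \subset (K \cap rB^n_2)^\circ$ to get $h_{K\cap rB^n_2}(X) \leq 1$ almost surely. This gives the claim with $C_{\ref{lem:interpolation-body-bounds}} = C_{\ref{thm:comparison-theorem}}$.
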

\begin{proof}
Apply~\Cref{thm:comparison-theorem} to $\gauge{\cdot} = h_{K \cap r B^n_2}$, using $\Lip(h_{K \cap r B^n_2}) \leq r$ and the fact that \[
X \in K^\circ \subset (K \cap r B^n_2)^\circ,
\]with probability $1$.
\end{proof}

We now establish~\Cref{thm:metric-entropy-bobkov}. Let $r^\star_p = \sqrt{\tfrac{1}{p} \log(\e n/p)}$ for an integer $p \in \{1, 2, \dots, n\}$.
We will show that
\begin{equation}
\label{ineq:desired-inequality-on-integers}
H_K(r^\star_p) \leq
2 C_{\ref{lem:one-step-bound}}
C_{\ref{lem:interpolation-body-bounds}}^2
\left(
\frac{2+4c_{\ref{lem:bounded-Lipschitz-constant}}^2}{3}
+\frac{4}{9}\log 4
\right)p.
\end{equation}
If $r^\star_p \geq \rad(K)$, there is nothing to prove. Otherwise, we telescope the inequality in~\Cref{lem:one-step-bound}. Specifically, let $J \geq 1$ be an integer such that
\begin{equation}
\label{ineq:key-inequality-for-J}
2^{J-1} r^\star_p \leq \rad(K) < 2^J r^\star_p. 
\end{equation}
Then, 
\[
H_K(r^\star_p) = \sum_{j=1}^{J} \Big(H_K(2^{j-1} r^\star_p) - H_K(2^{j} r^\star_p)\Big)
\leq C_{\ref{lem:one-step-bound}} 
\sum_{j=1}^J \frac{w(K \cap 2^j r^\star_p B^n_2)^2}{4^j (r^\star_p)^2}.
\]
Applying \Cref{lem:interpolation-body-bounds}, for any $m_j \in \{1, 2 ,\dots, n\}$, we have, by our choice of $r^\star_p$,
\begin{equation}
\label{ineq:entropy-bound-almost-final}
H_K(r^\star_p) 
\leq
2 C_{\ref{lem:one-step-bound}} 
C_{\ref{lem:interpolation-body-bounds}}^2
\sum_{j=1}^J 
\Big\{m_j + \frac{p}{4^j}\frac{\log(\e n/m_j)}{\log(\e n/p)}\Big\}.
\end{equation}
We take 
\[
m_j = \ceil{\frac{p}{4^j}}, \quad \mbox{for}~1 \leq j \leq J.
\] 
Observe that $m_j \in \{1, 2, \dots, n\}$. Moreover, \eqref{ineq:key-inequality-for-J} and \Cref{lem:bounded-Lipschitz-constant} imply
\[
\frac{p}{4^J} \geq \frac{p}{4 \rad(K)^2} (r^\star_p)^2 \geq \frac{1}{4 \rad(K)^2} \geq \frac{1}{4 c_{\ref{lem:bounded-Lipschitz-constant}}^2}.
\]
Thus, for $1 \leq j \leq J$, it holds that
\[
\frac{p}{4^j} \leq m_j \leq \frac{p}{4^j} + 1 \leq \frac{p}{4^j} +
4 c_{\ref{lem:bounded-Lipschitz-constant}}^2 \frac{p}{4^J} 
\leq (1 + 4 c_{\ref{lem:bounded-Lipschitz-constant}}^2)
\frac{p}{4^j}. 
\]
Consequently, 
\begin{equation}
\label{eqn:m-sum-bound}
\sum_{j=1}^J 
m_j \leq p (1 + 4c_{\ref{lem:bounded-Lipschitz-constant}}^2)  
\sum_{j=1}^\infty 4^{-j}  =
\frac{1 + 4c_{\ref{lem:bounded-Lipschitz-constant}}^2}{3} \, p.
\end{equation}
Additionally, 
\begin{equation}
\label{eqn:log-term-bound}
\sum_{j=1}^J \frac{p}{4^j} \frac{\log(\e n/m_j)}{\log(\e n/p)} \leq 
p \sum_{j=1}^J \frac{1 + j \log 4}{4^j} = 
\Big(\frac{1}{3} + \frac{4}{9} \log 4\Big) p.  
\end{equation}
Therefore, combining the bounds~\eqref{eqn:m-sum-bound} and~\eqref{eqn:log-term-bound} with the inequality~\eqref{ineq:entropy-bound-almost-final}, 
we obtain the claimed inequality~\eqref{ineq:desired-inequality-on-integers}.

To conclude, we take the inequality~\eqref{ineq:desired-inequality-on-integers} and check that it implies the claimed entropy bound for all $r \in (0, \rad(K))$. First, fix $r > 0$; throughout we assume $r < \rad(K)$, otherwise there is nothing to prove. Then,
\[
N(K, rB^n_2) \leq N(K, r^\star_n B^n_2)N(r^\star_nB^n_2, rB^n_2) \leq
N(K, r^\star_n B^n_2) \Big(1 + 2\frac{r^\star_n}{r}\Big)^n.
\]
Taking logarithms and using $r \sqrt{n} \leq 1$ together with the estimate \eqref{ineq:desired-inequality-on-integers} at $p = n$, we obtain
\[
H_K(r) \leq H_K(r^\star_n) + n \log \Big(1 + \frac{2}{r \sqrt{n}}\Big)
\leq
\left[
\frac{
2 C_{\ref{lem:one-step-bound}}
C_{\ref{lem:interpolation-body-bounds}}^2
}{
\log 2
}
\left(
\frac{2+4c_{\ref{lem:bounded-Lipschitz-constant}}^2}{3}
+\frac{4}{9}\log 4
\right)
+2
\right]
n\log\Big(1 + \frac{1}{r \sqrt{n}}\Big).
\]
Now, for $r \geq 1/\sqrt{n}$, consider $r^\star_p$ for
\[
p = \min\{n, \ceil{q}\}, 
\quad \mbox{where} \quad 
q = \frac{\log(\e n r^2)}{r^2}.
\]
Then $r^\star_p \leq r$. Indeed, if $p = n$, then this is immediate. Otherwise, since $x \mapsto x^{-1}\log(\e n/x)$ is nonincreasing for $0 < x < \e n^2$, it holds that 
\[
(r^\star_p)^2 \leq \frac{\log(\e n/q)}{q} \leq \frac{\log(\e n r^2)}{q}  = r^2.
\]
Thus, by monotonicity of the entropy and the estimate~\eqref{ineq:desired-inequality-on-integers} applied with the above choice of $p$, we obtain
\begin{align*}
H_K(r)
&\leq
2 C_{\ref{lem:one-step-bound}}
C_{\ref{lem:interpolation-body-bounds}}^2
\left(
\frac{2+4c_{\ref{lem:bounded-Lipschitz-constant}}^2}{3}
+\frac{4}{9}\log 4
\right)
\ceil{q}
\\
&\leq
2 C_{\ref{lem:one-step-bound}}
C_{\ref{lem:interpolation-body-bounds}}^2
\left(
\frac{2+4c_{\ref{lem:bounded-Lipschitz-constant}}^2}{3}
+\frac{4}{9}\log 4
\right)
\left(1+c_{\ref{lem:bounded-Lipschitz-constant}}^2\right)
\frac{\log(\e n r^2)}{r^2},
\end{align*}
as needed. The last inequality uses the fact that, by~\Cref{lem:bounded-Lipschitz-constant},
\[
q \geq \frac{1}{r^2} \geq \frac{1}{\rad(K)^2}
\geq c_{\ref{lem:bounded-Lipschitz-constant}}^{-2}.
\]

\subsection{Proof of~\texorpdfstring{\Cref{thm:covering-numbers}}{the covering-number theorem}}

We now deduce~\Cref{thm:covering-numbers} from~\Cref{thm:metric-entropy-bobkov}.
Note the following consequence of~\Cref{thm:metric-entropy-bobkov}. 
\begin{corollary} 
\label{cor:bobkov-to-crosspolytope}
There exist universal constants $c_{\ref{cor:bobkov-to-crosspolytope}}, C_{\ref{cor:bobkov-to-crosspolytope}}>0$ such that, for every $L \in \Bodies$ with $L^\circ$ in $\Bobkov$-position and $\vol_n(L^\circ) = 1$, we have
\begin{equation}
\label{eq:bobkov-to-crosspolytope}
H_L(s)
\leq C_{\ref{cor:bobkov-to-crosspolytope}}\log N(B_1^n, c_{\ref{cor:bobkov-to-crosspolytope}}sB_2^n),
\quad \mbox{for all}~s>0.
\end{equation}
\end{corollary}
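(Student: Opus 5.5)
The plan is to compare the bound of \Cref{thm:metric-entropy-bobkov} with Sch\"utt's estimate~\eqref{Carsten} scale by scale. We split $s>0$ into three regimes: $s\in(0,n^{-1/2})$, $s\in[n^{-1/2},\rad(L))$, and $s\geq \rad(L)$. In the last regime, $L\subset \rad(L)B^n_2\subset sB^n_2$, so $H_L(s)=0$ and there is nothing to prove. In the other two regimes we will choose $c_{\ref{cor:bobkov-to-crosspolytope}}$ small, so that $c_{\ref{cor:bobkov-to-crosspolytope}}s$ lands in the range where~\eqref{Carsten} gives a lower bound of the matching form.

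For $s<n^{-1/2}$, \Cref{thm:metric-entropy-bobkov} gives $H_L(s)\leq C n\log(1+\tfrac{1}{s\sqrt n})$. Since $c s\leq n^{-1/2}$, \eqref{Carsten} gives
\[
\log N(B^n_1,csB^n_2)\geq c' n\log\Big(1+\frac{1}{cs\sqrt n}\Big)\geq c' n\log\Big(1+\frac{1}{s\sqrt n}\Big)
\]
for $c\leq 1$. This gives the claim with $C_{\ref{cor:bobkov-to-crosspolytope}}=C/c'$.

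For $n^{-1/2}\leq s<\rad(L)\leq c_{\ref{lem:bounded-Lipschitz-constant}}$ (the upper bound by \Cref{lem:bounded-Lipschitz-constant}), the theorem gives $H_L(s)\leq C s^{-2}\log(\e n s^2)$. Set $r=cs$, with $c$ chosen so that $c\,c_{\ref{lem:bounded-Lipschitz-constant}}$ is at most the constant appearing in~\eqref{Carsten}; this makes $r$ admissible there. There are two sub-cases.

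\emph{Case $r>n^{-1/2}$.} Here $\log N(B^n_1,rB^n_2)\gtrsim r^{-2}\log(1+nr^2)=c^{-2}s^{-2}\log(1+c^2ns^2)$. Since $ns^2\geq 1/c^2$ in this case, we have $\log(1+c^2ns^2)\gtrsim \log(\e ns^2)$, up to a constant depending only on $c$; this uses $\log(\e x)\lesssim_c \log(1+c^2x)$ for $x\geq c^{-2}$.

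\emph{Case $r\leq n^{-1/2}$, i.e.\ $1\leq ns^2\leq c^{-2}$.} Here $\log(\e ns^2)\leq 1+2\log(1/c)$, so $H_L(s)\lesssim_c s^{-2}\leq n$. On the other side, \eqref{Carsten} gives $\log N(B^n_1,rB^n_2)\gtrsim n\log(1+\tfrac{1}{cs\sqrt n})\geq n\log 2$, because $cs\sqrt n\leq 1$.

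The only delicate point is this boundary bookkeeping near $s\approx n^{-1/2}$ and near the upper cutoff $c$ in~\eqref{Carsten}. It is handled by the bound $\rad(L)\lesssim 1$ and by adjusting the constants; all comparisons between $\log(\e x)$ and $\log(1+c^2x)$ are elementary.
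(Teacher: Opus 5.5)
Your proof is correct and follows essentially the same route as the paper. You use $\rad(L)\leq c_{\ref{lem:bounded-Lipschitz-constant}}$ to put $cs$ inside the range of~\eqref{Carsten}, then split into the same four cases $s\geq\rad(L)$, $s<n^{-1/2}$, $n^{-1/2}\leq s\leq 1/(c\sqrt n)$, and $s>1/(c\sqrt n)$; the only cosmetic difference is that in the boundary case you bound $\log(\e ns^2)\leq 1+2\log(1/c)$, where the paper uses $\log(\e ns^2)/(ns^2)\leq 1$, and both give $H_L(s)\lesssim n$.
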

\begin{proof}
By~\Cref{lem:bounded-Lipschitz-constant},
\(\rad(L)\leq c_{\ref{lem:bounded-Lipschitz-constant}}\). Choose a sufficiently small
universal $c_{\ref{cor:bobkov-to-crosspolytope}} > 0$ so that
$c_{\ref{cor:bobkov-to-crosspolytope}}c_{\ref{lem:bounded-Lipschitz-constant}}$ lies in the range of validity of the second
estimate in~\eqref{Carsten}. If $s\geq\rad(L)$, then $H_L(s)=0$.
Otherwise, if $s < 1/\sqrt{n}$, then display \eqref{Carsten}
and~\Cref{thm:metric-entropy-bobkov} yield
\[
H_L(s)
\leq C_{\ref{thm:metric-entropy-bobkov}} \, n\log\Big(1+\frac{1}{s\sqrt{n}}\Big)
\leq C\log N(B_1^n,c_{\ref{cor:bobkov-to-crosspolytope}}sB_2^n).
\]
Suppose now that $s \geq 1/\sqrt{n}$. If $s \leq 1/(c_{\ref{cor:bobkov-to-crosspolytope}} \sqrt{n})$, then
$\log N(B_1^n,c_{\ref{cor:bobkov-to-crosspolytope}}sB_2^n)\geq cn$, and hence by 
\Cref{thm:metric-entropy-bobkov},
\[
H_L(s) \leq \frac{C_{\ref{thm:metric-entropy-bobkov}}}{s^2}\log(\e ns^2)
=C_{\ref{thm:metric-entropy-bobkov}}\,n\,\frac{\log(\e ns^2)}{ns^2}
\leq Cn \leq C''\log N(B_1^n,c_{\ref{cor:bobkov-to-crosspolytope}}sB_2^n).
\]
Finally, if $s > 1/(c_{\ref{cor:bobkov-to-crosspolytope}} \sqrt{n})$, then by the second part of display~\eqref{Carsten},
\[
\log N(B_1^n,c_{\ref{cor:bobkov-to-crosspolytope}}sB_2^n)
\geq
\frac{c}{c_{\ref{cor:bobkov-to-crosspolytope}}^2s^2}\log(1+c_{\ref{cor:bobkov-to-crosspolytope}}^2ns^2)
\geq
\frac{c'}{s^2}\log(\e ns^2) \geq c' \, H_L(s).
\]
Combining all the cases yields the claim.
\end{proof}

Now fix a convex body $K \subset \R^n$ and let
\[
K'=\frac12(K-K).
\]
Choose $S \in \GL(n)$ such that $S(K')^\circ$ is in $\Bobkov$-position and has unit volume.
By the Brunn--Minkowski and Rogers--Shephard
inequalities and the bound $\binom{2n}{n} \leq 4^n$, we have
\[
\vol_n(K)\leq \vol_n(K')
\leq 2^{-n}\binom{2n}{n}\vol_n(K)
\leq 2^n\vol_n(K).
\]
Let $T = (S^{-1})^\ast$. Then, the Blaschke--Santal\'{o} and reverse Blaschke--Santal\'{o} inequalities imply
\[
\vol_n(TK)^{1/n} \simeq 
\vol_n(TK')^{1/n} 
\simeq 
\vol_n(B^n_2)^{2/n} \simeq  \vol_n(B^n_1)^{1/n},
\]
where we used the volume normalization $\vol_n((TK')^\circ) = \vol_n(S (K')^\circ) =1$. As a result, if we set 
\[
T_\lambda = \lambda T = \Big (\frac{\vol_n(B_1^n)}{\vol_n(TK)}\Big)^{1/n} T, 
\]
then $\lambda \simeq 1$. 
Fix $x_0\in K$. Note that
\[
K-x_0\subseteq K-K=2K'.
\]
Therefore, for any $r>0$,
\begin{align*}
\log N(T_\lambda K,rB_2^n)
&=\log N\bigl(\lambda T(K-x_0),rB_2^n\bigr)\\
&\leq \log N\bigl(2\lambda TK',\tfrac r2B_2^n\bigr)\\
&=H_{TK'}\left(\frac{r}{4\lambda}\right).
\end{align*}
Applying inequality \eqref{eq:bobkov-to-crosspolytope} from~\Cref{cor:bobkov-to-crosspolytope} and using $\lambda \simeq 1$, we obtain
\[
\log N(T_\lambda K,rB_2^n)
\leq
C_{\ref{cor:bobkov-to-crosspolytope}}\log N\left(
B_1^n,\frac{c_{\ref{cor:bobkov-to-crosspolytope}}r}{4\lambda}B_2^n
\right)
\leq
C_{\ref{thm:covering-numbers}}
\log N\big(
B_1^n,c_{\ref{thm:covering-numbers}}rB_2^n
\big),
\]
as needed.

\section{Proof of~\texorpdfstring{\Cref{thm:reverse-urysohn}}{the reverse Urysohn theorem}}

For $K \in \Bodies$, we recall
\[
\rad(K) = \sup_{x \in K} \|x\|_2, \quad \text{and} \quad
\DualDvoretzky(K) = \bigg(\frac{\sqrt{n}\, M^\ast(K)}{\rad(K)}\bigg)^2.
\]
We start with the following estimate for the spherical mean width in Bobkov's position.

\begin{theorem}\label{thm:main}
There are constants $C_{\ref{thm:main}}, C_{\ref{thm:main}}' \in (0, \infty)$ such that for every $n\geq 1$
and every $K\in \Bodies$ with $K^\circ$ in $\Bobkov$-position and $\vol_n(K^\circ) = 1$,
we have
\begin{equation}\label{eq:main-bound}
\sqrt{n} \, M^\ast(K)
 \leq C_{\ref{thm:main}}\sqrt{\log\big(\e n/\DualDvoretzky(K)\big)} 
 \leq 
 C_{\ref{thm:main}}' \sqrt{\log\big(\e n\big)}.
\end{equation}
\end{theorem}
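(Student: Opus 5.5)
Apply \Cref{thm:comparison-theorem} with the gauge $\gauge{\cdot} = h_K$. Two facts make this choice work.

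First, the conditioned measure is supported in $K^\circ$. For $X \in K^\circ$ we have $h_K(X) = \sup_{y \in K}\langle X, y\rangle \leq 1$, so $\E_{\gamma_{K^\circ}} h_K(X) \leq 1$.

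Second, $h_K$ is $\rad(K)$-Lipschitz. Moreover $\E_{\gamma_n} h_K(G) = w(K) \simeq \sqrt{n}\, M^\ast(K)$; more precisely $w(K) = c_n \sqrt{n} M^\ast(K)$ with $c_n \to 1$ and $c_n \geq c > 0$.

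With these, \Cref{thm:comparison-theorem} gives, for every $m \in \{1,\dots,n\}$,
\[
\sqrt{n}\, M^\ast(K) \leq C\Big(\sqrt{\log(\e n/m)} + \rad(K)\sqrt{m}\Big).
\]

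We now choose $m$ to match the Dvoretzky number. Write $k = \DualDvoretzky(K)$, so that $\sqrt{n}M^\ast(K) = \rad(K)\sqrt{k}$. Note $k \leq n$, because $M^\ast(K) \leq \rad(K)$. Take $m = \ceil{\epsilon k}$ for a small universal $\epsilon > 0$ to be fixed, and consider two cases.

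If $\epsilon k \geq 1$, then $m \leq 2\epsilon k$. Hence
\[
\rad(K)\sqrt{m} \leq \sqrt{2\epsilon}\,\rad(K)\sqrt{k} = \sqrt{2\epsilon}\,\sqrt{n}M^\ast(K).
\]
Choosing $\epsilon$ so small that $C\sqrt{2\epsilon} \leq 1/2$, we absorb this term into the left-hand side. This leaves $\sqrt{n}M^\ast(K) \leq 2C\sqrt{\log(\e n/m)}$. Since $m \geq \epsilon k$, we have $\log(\e n/m) \leq \log(\e n/k) + \log(1/\epsilon) \lesssim \log(\e n/k)$, because $\log(\e n/k) \geq 1$.

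If instead $\epsilon k < 1$, take $m = 1$. By \Cref{lem:bounded-Lipschitz-constant}, $\rad(K) \leq c_{\ref{lem:bounded-Lipschitz-constant}}$, so we get $\sqrt{n}M^\ast(K) \leq C(\sqrt{\log(\e n)} + c_{\ref{lem:bounded-Lipschitz-constant}})$. Since $k < 1/\epsilon$, $\log(\e n) \leq \log(\e n/k) + \log(1/\epsilon) \lesssim \log(\e n/k)$ once again.

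The second inequality in \eqref{eq:main-bound} is immediate from $k \geq 1$. This holds because $M^\ast(K) \geq \rad(K)/\sqrt{n}$ up to a universal constant: take $y \in K$ with $\|y\|_2 = \rad(K)$ and use symmetry of $K$, which gives $h_K(\theta) \geq |\langle y,\theta\rangle|$ and $\E|\langle y,\theta\rangle| \simeq \rad(K)/\sqrt{n}$. If $k$ is only bounded below by a constant $c$, the bound $\log(\e n/k) \lesssim \log(\e n)$ still holds.

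The main point needing care is the absorption step. It requires the Lipschitz term to be comparable to the left-hand side with a small constant, and this is exactly why $m$ must be tied to $k$ rather than to $n$. The remaining steps are routine case-checking of constants.
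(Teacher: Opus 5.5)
Your proof is correct and is essentially the paper's argument. You apply \Cref{thm:comparison-theorem} to $h_K$, use $h_K\le 1$ on $K^\circ$ and $\Lip(h_K)=\rad(K)$, and take $m\approx\epsilon\,\DualDvoretzky(K)$ to absorb the Lipschitz term (the paper uses $m=\floor{\DualDvoretzky(K)/(4C^2)}$), falling back to $m=1$ with \Cref{lem:bounded-Lipschitz-constant} when $\DualDvoretzky(K)$ is bounded; your one slip, that $\DualDvoretzky(K)\geq 1$, is false in general (for a segment, $\DualDvoretzky(K)<1$ once $n\geq 2$), but your fallback $\DualDvoretzky(K)\geq c$ is exactly what the paper uses and suffices.
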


\subsection{Proof of~\texorpdfstring{\Cref{thm:main}}{the mean-width theorem}}

Let $X \sim \gamma_{K^\circ}$ and take $\gauge{\cdot} = h_K$ in~\Cref{thm:comparison-theorem}. Since $X \in K^\circ$ almost surely, we have $h_K(X)\leq 1$. Hence, integrating in polar coordinates and using $\E\|G\|_2 \geq C'' \sqrt{n}$, we find a universal constant $C > 0$ such that, for every $m \in \{1,2,\dots,n\}$,
\begin{equation}
\label{ineq:m-final-bound}
\sqrt{n} \, M^\ast(K) \leq 
C \Big(\sqrt{\log(\e n/m)}  + \rad(K) \sqrt{m} \Big).
\end{equation}
Suppose first that $\DualDvoretzky(K) \geq 4C^2$. 
Then, setting $m = \floor{\tfrac{1}{4C^2} \DualDvoretzky(K)}$ in~\eqref{ineq:m-final-bound}, we obtain
\[
\sqrt{n} \, M^\ast(K)  \leq C \sqrt{\log(8 C^2) + \log(\e n/\DualDvoretzky(K))} + 
\frac{1}{2} \sqrt{n} \, M^\ast(K). 
\]
Rearranging, we obtain 
\[
\sqrt{n} \, M^\ast(K)\leq C''' \sqrt{\log(\e n/\DualDvoretzky(K))}. 
\]
On the other hand, if $\DualDvoretzky(K) \leq 4C^2$, then $\DualDvoretzky(K) \simeq 1$, and hence we take $m = 1$ in the bound~\eqref{ineq:m-final-bound}. Using~\Cref{lem:bounded-Lipschitz-constant} to bound $\rad(K) \leq c_{\ref{lem:bounded-Lipschitz-constant}}$, we obtain 
\[
\sqrt{n} \, M^\ast(K)
\leq C \sqrt{\log (\e n)} + c_{\ref{lem:bounded-Lipschitz-constant}} 
\leq C'' \sqrt{\log(\e n/\DualDvoretzky(K))}.
\]
Combining the two cases, we obtain the first inequality. The second inequality comes from the trivial inequality $\DualDvoretzky(K) \geq c > 0$ for some universal constant $c > 0$.

\subsection{Proof of~\texorpdfstring{\Cref{thm:reverse-urysohn}}{the reverse Urysohn theorem}}

We omit the details here, as we obtain~\Cref{thm:reverse-urysohn} by passing to the difference body $K' = (K-K)/2$ and applying~\Cref{thm:main} in exactly the same way as in the proof of~\Cref{thm:quermassintegrals}; see \Cref{sec:proof-of-quermassintegrals}.

\section{Proof of~\texorpdfstring{\Cref{thm:quermassintegrals}}{the quermassintegral theorem}}

In fact, we prove the following result for the $p$th spherical mean width. It is defined as 
\[
M^\ast_p(K) = \begin{cases} 
\Big(\E_{\sigma_{n-1}} \, h_K(\theta)^p\Big)^{1/p}, & p \neq 0 \\ 
\exp \E_{\sigma_{n-1}} \log h_K(\theta), & p = 0
\end{cases}.
\]
\begin{theorem}
\label{thm:symmetric-negative-p-moments}
There exist constants $C_{\ref{thm:symmetric-negative-p-moments}}, C_{\ref{thm:symmetric-negative-p-moments}}' \in (0, \infty)$ such that,
for every $n \geq 1$ and every $K \in \Bodies$ with $K^\circ$ in $\Bobkov$-position and $\vol(K^\circ) = 1$, the following inequalities hold:
\[
M^\ast_q(K) \leq C_{\ref{thm:symmetric-negative-p-moments}} \sqrt{\frac{\log(\e n) + q}{n}} 
\quad \mbox{and} \quad 
M^\ast_{-p}(K) \leq 
C_{\ref{thm:symmetric-negative-p-moments}}' 
W_{[p]}(K) \leq 
C_{\ref{thm:symmetric-negative-p-moments}} \, 
\sqrt{\frac{\log(\e n/p)}{n}},
\]
for any $q \geq 0$ and any $p \in \{1, 2, \dots, n\}$.
\end{theorem}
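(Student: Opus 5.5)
We treat the positive moments and the negative moments separately. Both reduce to the Gaussian width bounds already established in Bobkov's position.

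\emph{Positive moments.} Put $f(\theta) = h_K(\theta)$. By Lemma~\ref{lem:bounded-Lipschitz-constant}, $f$ is Lipschitz on $\R^n$ with constant $\rad(K) \leq c_{\ref{lem:bounded-Lipschitz-constant}}$. By Theorem~\ref{thm:main}, its mean satisfies $M^\ast(K) \leq C\sqrt{\log(\e n)/n}$. Spherical concentration (L\'evy) then gives
\[
\|f - M^\ast(K)\|_{L^q(\sigma_{n-1})} \leq C\, c_{\ref{lem:bounded-Lipschitz-constant}} \sqrt{q/n}.
\]
The triangle inequality in $L^q$ yields $M^\ast_q(K) \leq C\sqrt{(\log(\e n)+q)/n}$ for $q \geq 1$. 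For $q \in [0,1]$, monotonicity of $q \mapsto M^\ast_q$ gives the same bound. (Equivalently, one can pass to Gaussian moments of $h_K(G)$ via Borell--TIS.)

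\emph{Negative moments, first inequality.} We use the standard comparison between $M^\ast_{-p}$ and $W_{[p]}$. Kubota's formula writes $W_{[p]}(K)^p$ as the average over $F \in \Gr_{n,p}$ of $\vol_p(P_F K)/\vol_p(B^p_2)$. Urysohn's inequality in $F$ gives $\vr(P_F K) \leq M^\ast_F(P_F K)$, but this direction is wrong for our purpose. Instead we use the Blaschke--Santal\'o inequality in $F$:
\[
\vr(P_F K)\,\vr\big((P_F K)^\circ\big) \leq 1, \qquad (P_F K)^\circ = K^\circ \cap F.
\]
In polar coordinates, $\vr(K^\circ\cap F)^{p} = \E_{\sigma_F} h_K(\theta)^{-p}$. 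Hence
\[
\vr(P_FK)^{-p} \geq \E_{\sigma_F} h_K^{-p}.
\]
Since $p/\cdot$ behaves the wrong way under averaging, we instead apply H\"older/Jensen in the form
\[
M^\ast_{-p}(K)^{-p} = \E_F \E_{\sigma_F} h_K^{-p} \leq \E_F \vr(P_F K)^{-p}.
\]
We then need $\E_F \vr(P_FK)^{-p} \gtrsim W_{[p]}(K)^{-p}$, which does not follow from Jensen alone. The clean route is the known inequality of Paouris--Valettas/Mourtada type: $M^\ast_{-p}(K) \simeq W_{[p]}(K)$ up to constants for $p \leq c\,k_\ast$-ranges, and in general $M^\ast_{-p}(K) \leq C\, W_{[p]}(K)$. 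We would cite or reprove it via the formula $\vol_p(P_FK) \geq \vol_p(K^\circ\cap F)^{-1}c^p$ combined with reverse H\"older for $h_K$ restricted to $F$. \textbf{This is the step I expect to be the main obstacle.} The fallback is to follow the argument in \cite{Mou25}, which proves exactly $M^\ast_{-p} \lesssim W_{[p]}$.

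\emph{Negative moments, second inequality.} We bound $W_{[p]}(K)$ by the entropy estimate of Theorem~\ref{thm:metric-entropy-bobkov}. Set $r = C r^\star_p = C\sqrt{\log(\e n/p)/p}$. The bound \eqref{ineq:desired-inequality-on-integers} gives $N(K, rB^n_2) \leq \e^{Cp}$. For every $F$, $P_F K$ is covered by at most $\e^{Cp}$ translates of $rB^F_2$, so
\[
\vol_p(P_FK)/\vol_p(B^p_2) \leq \e^{Cp} r^p.
\]
Averaging over $F$ and taking $p$-th roots gives $W_{[p]}(K) \leq \e^{C} r$, which is of order $\sqrt{\log(\e n/p)/p}$. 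This must be compared with the target $\sqrt{\log(\e n/p)/n}$. The discrepancy is only a normalization issue: $W_{[p]}$ uses $\vol_p(B^p_2)$, whereas the covering bound is naturally scaled by $\sqrt{p/n}$. So we rescale, using the Sudakov/entropy bound at scale $r\sqrt{p/n}$ in the projected space, or more directly apply the dual Sudakov inequality in $F$ together with Lemma~\ref{lem:interpolation-body-bounds} with $m=p$. For a random $p$-dimensional projection, $w(P_FK)$ is comparable to $\sqrt{p/n}\,w(K \cap rB^n_2)$-type quantities. This yields $W_{[p]}(K) \lesssim \sqrt{\log(\e n/p)/n}$, which I would verify by tracking the $\sqrt{p/n}$ factor carefully.
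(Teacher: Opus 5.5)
Your positive-moment argument (L\'evy's inequality with $\Lip(h_K)=\rad(K)\le c_{\ref{lem:bounded-Lipschitz-constant}}$, \Cref{thm:main}, then monotonicity for $q\in[0,1]$) is exactly the paper's. Both negative-moment inequalities, however, have gaps.

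\textbf{First inequality.} For $M^\ast_{-p}(K)\lesssim W_{[p]}(K)$ you use the Blaschke--Santal\'o inequality. It bounds $\E_{\sigma_F}h_K^{-p}$ from \emph{above}, which gives a \emph{lower} bound on $M^\ast_{-p}(K)$, the wrong direction. Your chain $M^\ast_{-p}(K)^{-p}\le\E_F\,\mathrm{vr}(P_FK)^{-p}$ cannot be closed by any lower bound on its right-hand side, so you end up deferring the step to the literature. The fix is the \emph{reverse} Santal\'o (Bourgain--Milman) inequality on each section, $\mathrm{vol}_p(K^\circ\cap F)\,\mathrm{vol}_p(P_FK)\ge c^p\,\mathrm{vol}_p(B^p_2)^2$. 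It gives $\E_{\sigma_F}h_K^{-p}\ge c^p/Y_F$ with $Y_F=\mathrm{vol}_p(P_FK)/\mathrm{vol}_p(B^p_2)$. Contrary to your remark, Jensen then finishes the argument:
\[
M^\ast_{-p}(K)^{-p}=\E_F\E_{\sigma_F}h_K^{-p}\ge c^p\,\E_FY_F^{-1}\ge c^p\,(\E_FY_F)^{-1}=c^p\,W_{[p]}(K)^{-p}.
\]
This is \Cref{lem:from-negative-moments-to-intrinsic-volumes}; no reverse H\"older step is needed.

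\textbf{Second inequality (the main gap).} Your covering argument really gives only $W_{[p]}(K)\lesssim\sqrt{\log(\e n/p)/p}$. The $\sqrt{n/p}$ loss is not a normalization issue. Since $P_F(rB^n_2)=rB^n_2\cap F$ does not shrink, entropy numbers alone cannot detect the contraction of $K$ under a random projection; for $p=1$ your bound is just $M^\ast(K)\le\rad(K)$.

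Projecting $K$ itself does not suffice either. One has $\E_F w(P_FK)\simeq\sqrt{p/n}\,w(K)$, so your heuristic involving $w(K\cap rB^n_2)$ is not right as stated, and Urysohn applied to $P_FK$ yields only $\sqrt{\log(\e n)/n}$.

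What is needed is to pair the entropy at scale $r=r^\star_p$ with the width of the \emph{local piece} $K\cap rB^n_2$. By \Cref{lem:interpolation-body-bounds} with $m=p$, this width is $\lesssim\sqrt{\log(\e n/p)}$. The paper does this by writing $W_{[p]}(K)=C_{n,p}V_p(K)^{1/p}$ with $C_{n,p}\lesssim p/\sqrt n$. It then bounds $V_p(\lambda K)$, with $\lambda=p/\sqrt{\log(\e n/p)}$, through Mourtada's Wills-functional estimate (\Cref{lem:bound-on-wills}).

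Your projection route can also be completed directly:
\begin{enumerate}[label=(\roman*)]
\item By \eqref{ineq:desired-inequality-on-integers}, $K$ is covered by $\e^{Cp}$ translates of $L=2K\cap rB^n_2$.
\item Hence $Y_F\le \e^{Cp}\,\mathrm{vr}(P_FL)^p\le \e^{Cp}\big(\E_{\sigma_F}h_L\big)^p$, using Urysohn in $F$.
\item The function $F\mapsto \E_{\sigma_F}h_L$ has mean $M^\ast(L)\lesssim w(K\cap rB^n_2)/\sqrt n$ and Lipschitz constant $\lesssim r/\sqrt p$ on $O(n)$ in the Hilbert--Schmidt metric.
\item Concentration on $O(n)$ then gives that its $L^p(\nu_{n,p})$-norm is $\lesssim (w(K\cap rB^n_2)+r)/\sqrt n\lesssim\sqrt{\log(\e n/p)/n}$, which is the required bound.
\end{enumerate}
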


\subsection{Proof of~\texorpdfstring{\Cref{thm:quermassintegrals}}{the quermassintegral theorem}}
\label{sec:proof-of-quermassintegrals} 

For a convex body $K \subset \R^n$, consider the difference body
\[
K' = \tfrac{1}{2}(K-K).
\]
Pick $S \in \GL(n)$ such that $S(K')^\circ$ has unit volume and is in $\Bobkov$-position. Set $T=(S^{-1})^\ast$ and $K''=TK'$, so that $(K'')^\circ=S(K')^\circ$.
By the second inequality in~\Cref{thm:symmetric-negative-p-moments}, for every $1 \leq k \leq n$,
\[
W_{[k]}(K'') \leq
\frac{
C_{\ref{thm:symmetric-negative-p-moments}}
}{
C_{\ref{thm:symmetric-negative-p-moments}}'
}
\sqrt{\frac{\log(\e n/k)}{n}}.
\]
For every
$E \in \Gr_{n, k}$, the Brunn--Minkowski inequality gives
\[
\vol_k(P_E TK) \leq \vol_k(P_E TK') = \vol_k(P_E K'').
\]
Therefore, integrating over $E$, we obtain
\begin{subequations}
\begin{equation}
\label{ineq:upper-on-TK}
W_{[k]}(TK) \leq W_{[k]}(K'') \leq
\frac{
C_{\ref{thm:symmetric-negative-p-moments}}
}{
C_{\ref{thm:symmetric-negative-p-moments}}'
}
\sqrt{\frac{\log(\e n/k)}{n}},
\end{equation}
for every $1 \leq k \leq n$. Finally, observe that by the Rogers--Shephard inequality,
\[
\vol_n(K'') \leq 2^{-n} \binom{2n}{n} \vol_n(TK).
\]
Therefore, the estimate $\binom{2n}{n} \leq 4^n$ and the reverse Blaschke--Santal\'o inequality imply that
\begin{equation}
\label{ineq:lower-on-volume-radius}
\vr(TK) \geq \frac{1}{2} \vr(K'') \geq \frac{c}{2} \vr((K'')^\circ)^{-1} 
= \frac{c}{2} \vol_n(B^n_2)^{1/n} \simeq \frac{1}{\sqrt{n}},
\end{equation}
\end{subequations}
as $(K'')^\circ$ was chosen to have unit volume. Above, $c > 0$ is a universal constant. 
Combining inequalities~\eqref{ineq:upper-on-TK} and~\eqref{ineq:lower-on-volume-radius} yields the claim.

\subsection{Proof of~\texorpdfstring{\Cref{thm:symmetric-negative-p-moments}}{the symmetric negative-moment theorem}}

The main estimate required to prove~\Cref{thm:symmetric-negative-p-moments} is
the following bound on the negative moments, which we prove below.
\begin{proposition}
\label{prop:main-negative-moment}
There are constants $C_{\ref{prop:main-negative-moment}}, C_{\ref{prop:main-negative-moment}}'\in (0, \infty)$ such that, for every $n\geq 1$,
every $K\in \Bodies$ with $K^\circ$ in $\Bobkov$-position and $\vol_n(K^\circ) = 1$, and
every $p \in \{1, 2, \dots, n\}$, we have
\[
\sqrt{n} \, M^\ast_{-p}(K) 
\leq 
C_{\ref{prop:main-negative-moment}}' \, 
\sqrt{n} \, W_{[p]}(K) 
\leq 
C_{\ref{prop:main-negative-moment}} \, \sqrt{\log(\e n/p)}.
\]
\end{proposition}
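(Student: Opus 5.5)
The proof splits into two inequalities.

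\emph{First inequality: $M^\ast_{-p}(K)\lesssim W_{[p]}(K)$.} This is a general fact and does not use $\Bobkov$-position. By Kubota's formula and Fubini, $W_{[p]}(K)^p = \int_{\Gr_{n,p}} \vr(P_F K)^p\, d\nu_{n,p}(F)$. For each $F$, apply Urysohn's inequality in its reversed form, or more precisely the known inequality $\vr(L)\geq M^\ast_{-p}(L)$ for a $p$-dimensional symmetric body $L$. This follows from $\vr(L) = M_{-p}(L^\circ)^{-1}$ in dimension $p$: we have $\vol_p(L^\circ)\propto \int_{S^{p-1}} h_L^{-p}$, and then Blaschke--Santal\'o gives $\vr(L)\vr(L^\circ)\leq 1$. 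Hence
\[
M^\ast_{-p}(P_F K)^{-p}\geq \vr(P_FK)^{-p}\cdot(\text{const})^p .
\]
Jensen, or Hölder in the right direction, then lets us average over $F$. We also use that $h_{P_FK}=h_K$ on $F$, and that $\E_{\sigma_{n-1}} h_K^{-p} = \int_{\Gr}\E_{\sigma_F} h_K^{-p}$. Together these give $M^\ast_{-p}(K)^{-p} \geq c^p\int_{\Gr}\vr(P_FK)^{-p}$. By Jensen, $\int \vr^{-p}\geq (\int \vr^{p})^{-1}$, so $M^\ast_{-p}(K)\leq C\,W_{[p]}(K)$. This is the standard Paouris-type comparison, and I expect it to be routine.

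\emph{Second inequality: $\sqrt n\,W_{[p]}(K)\lesssim\sqrt{\log(\e n/p)}$.} I would route this through covering numbers, which is the connection mentioned in the introduction in Mourtada's form. By Sudakov/volumetric estimates, for each $F\in\Gr_{n,p}$ and $r>0$,
\[
\vol_p(P_FK)\leq N(P_FK, rB_2^p)\,\vol_p(rB_2^p)\leq N(K,rB^n_2)\,r^p\vol_p(B^p_2),
\]
so that $\vr(P_FK)\leq r\,N(K,rB_2^n)^{1/p}$. Take $r=r^\star_p=\sqrt{\tfrac1p\log(\e n/p)}$. Inequality \eqref{ineq:desired-inequality-on-integers} gives $H_K(r^\star_p)\leq Cp$, hence $\vr(P_FK)\leq e^{C} r^\star_p$ for every $F$. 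This yields only $W_{[p]}(K)\lesssim\sqrt{\log(\e n/p)/p}$, which is off by the factor $\sqrt{n/p}$. So the projection bound needs the dimension-$p$ averaging done correctly. The fix is to use $N(P_FK,rB_2^p)$ with $r\simeq r^\star_p\sqrt{p/n}$ on a typical $F$. Equivalently, one bounds the mean width of $P_FK$ by random projection: by Lemma~\ref{lem:interpolation-body-bounds}, a random $p$-dimensional projection shrinks $K\cap sB_2^n$ to roughly width $\sqrt{p/n}\,(\sqrt{\log(\e n/m)}+s\sqrt m)$.

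Concretely, I would prove $\E_F\vr(P_FK)^p\leq (C\sqrt{\log(\e n/p)/n})^p$ by combining Urysohn in $F$, $\vr(P_FK)\leq M^\ast_F(P_FK)$, with a large-deviation estimate for $M^\ast_F(P_FK)$ around $M^\ast(K)$. Alternatively, I would bound $\vr(P_FK)$ via the entropy of $P_FK$ at scale $\sqrt{\log(\e n/p)/n}$. That entropy is controlled by Lemma~\ref{lem:one-step-bound} applied inside $F$, together with $w(P_F(K\cap sB_2^n))\leq w(K\cap sB^n_2)$ and Gaussian widths in $F$, which are a $\sqrt{p/n}$-fraction in expectation. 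The telescoping of \Cref{thm:metric-entropy-bobkov} then applies with $m_j\simeq p4^{-j}$.

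\emph{Main obstacle.} The hard step is obtaining the $p$-th moment over the Grassmannian rather than the first moment. I would first try Mourtada's formulation directly: $M^\ast_{-p}(K)\simeq\sup$ over scales of $r\,e^{-H_{K}(r)/p}$-type quantities, together with the corresponding $W_{[p]}$ bound. If that fails, I would use concentration of $h_K$ on the sphere, with Lipschitz constant $\rad(K)\leq c_{\ref{lem:bounded-Lipschitz-constant}}$ from Lemma~\ref{lem:bounded-Lipschitz-constant}, applied to the functions $h_{K\cap sB_2^n}$. This gives the tail needed for the $p$-th moment at the level $\sqrt{p/n}$.
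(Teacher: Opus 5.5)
Your first inequality follows the paper's argument: write $\E_{\sigma_{n-1}}h_K^{-p}$ as an average over $\Gr_{n,p}$, compare $K^\circ\cap F$ with $P_FK$ in each $F$, then apply Jensen. However, the comparison step is justified backwards. In dimension $p$ one has $M^\ast_{-p}(L)=\operatorname{vr}(L^\circ)^{-1}$, so Blaschke--Santal\'o gives $\operatorname{vr}(L)\leq M^\ast_{-p}(L)$. Your ``known inequality'' $\operatorname{vr}(L)\geq M^\ast_{-p}(L)$ therefore fails for every non-ellipsoid. The bound you actually need, $M^\ast_{-p}(L)\leq C\operatorname{vr}(L)$, is the reverse Santal\'o (Bourgain--Milman) inequality, which is what the paper uses. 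With that substitution this part is fine.

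The genuine gap is the second inequality, which carries the content of the proposition. Your concrete route cannot work: Urysohn in $F$, $\operatorname{vr}(P_FK)\leq M^\ast_F(P_FK)$, plus deviation bounds around $M^\ast(K)$. A uniform point on the sphere of a uniform $F\in\Gr_{n,p}$ is uniform on $\sphere$, and $h_{P_FK}=h_K$ on $F$, so $\E_F M^\ast_F(P_FK)=M^\ast(K)$. The best this route yields is $W_{[p]}(K)\leq(\E_F M^\ast_F(P_FK)^p)^{1/p}$, which is at least $M^\ast(K)$ and carries no gain in $p$.

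The loss is real. Place the crosspolytope so that its polar is in $\Bobkov$-position with unit volume. This gives an admissible $K$ with $\operatorname{vr}(K)\simeq n^{-1/2}$. By the sharpness of \Cref{thm:reverse-urysohn} for $B^n_1$, this $K$ has $M^\ast(K)\geq c\sqrt{\log n/n}$, whereas the proposition asserts $W_{[p]}(K)\leq Cn^{-1/2}$ for $p\geq n/2$.

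Your fallback through the entropy of $P_FK$ does not work as written either. \Cref{lem:one-step-bound} applied inside $F$ involves the width of $P_FK\cap sB_F$. That set contains, and is in general much larger than, $P_F(K\cap sB^n_2)$, and only the latter is controlled by \Cref{lem:interpolation-body-bounds}. You would also need exponential moments $\E_F N(P_FK,rB_F)$ over the Grassmannian, not expected widths.

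The missing idea is to localize before averaging over projections. The paper does this with \Cref{lem:bound-on-wills}, which gives $\log\sum_j V_j(\lambda K)\leq C(\lambda\, w(K\cap rB^n_2)+H_K(r))$. It evaluates this at $\lambda=p/\sqrt{\log(\e n/p)}$ and $r=r^\star_p$, uses $m=p$ in \Cref{lem:interpolation-body-bounds}, and uses $H_K(r^\star_p)\leq Cp$. This gives $V_p(K)^{1/p}\leq C\sqrt{\log(\e n/p)}/p$, and then $\sqrt{n}\,W_{[p]}(K)\leq Cp\,V_p(K)^{1/p}$.

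Your Urysohn idea can also be rescued by applying it to localized pieces rather than to $K$ itself:
\begin{itemize}
\item Take $t=r^\star_p$ and $N=N(K,tB^n_2)$; then $N\leq \e^{Cp}$ by \eqref{ineq:desired-inequality-on-integers}.
\item Central symmetry gives $K\subseteq\bigcup_{i\leq N}\bigl(x_i+(2K\cap tB^n_2)\bigr)$, hence $\mathrm{vol}_p(P_FK)\leq N\,\mathrm{vol}_p\bigl(P_F(2K\cap tB^n_2)\bigr)$ for every $F$.
\item Average over $F$ and use the Alexandrov inequality $W_{[p]}\leq W_{[1]}=M^\ast$ for the localized body. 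This yields $W_{[p]}(K)\leq 2N^{1/p}M^\ast(K\cap\tfrac{t}{2}B^n_2)$.
\item \Cref{lem:interpolation-body-bounds} with $m=p$ bounds this last mean width by $C\sqrt{\log(\e n/p)/n}$.
\end{itemize}
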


Assuming~\Cref{prop:main-negative-moment}, we establish~\Cref{thm:symmetric-negative-p-moments}. After possibly adjusting the universal constants, we may assume that $n$ is sufficiently large.
L\'{e}vy's concentration inequality on the sphere gives, for $q \geq 1$,
\[
\|f - \E f\|_{L^q(\sigma_{n-1})} \leq C' \Lip(f) \sqrt{\frac{q}{n}}.
\]
For $q \geq 1$, applying this to $f = h_K$ and noting that $\Lip(f) = \rad(K)$, we obtain
\begin{equation}\label{eqn:inequality-for-q-geq-1}
\sqrt{n} M^\ast_q(K) \leq \sqrt{n} \, M^\ast(K) + C' \rad(K) \sqrt{q}
\leq C'' \Big(\sqrt{\log(\e n)} + \sqrt{q}\Big), 
\end{equation}
where we used~\Cref{thm:main} and~\Cref{lem:bounded-Lipschitz-constant}.
For $q \in [0, 1]$, the monotonicity of $q \mapsto M^\ast_q(K)$ yields the result. For $p \in \{1,2, \dots, n\}$, the result follows immediately from~\Cref{prop:main-negative-moment}.

\subsection{Proof of~\texorpdfstring{\Cref{prop:main-negative-moment}}{the main negative-moment estimate}}

We recall the intrinsic volumes of a convex body $K \subset \R^n$. For $j \in \{1, \dots, n\}$, the $j$th intrinsic volume is given by
\[
V_j(K) = \binom{n}{j} \, \frac{\vol_n(B^n_2)}{\vol_j(B^j_2) \vol_{n-j}(B^{n-j}_2)} \, \E \vol_j(P_EK), 
\]
where the expectation is over a uniformly distributed $j$-dimensional subspace $E \subset \R^n$. By convention, we define $V_0(K) = 1$. 
\begin{lemma}[{\cite[Corollary 4.3 and Lemma 8.2]{Mou25}}]
\label{lem:bound-on-wills}
There is a constant ${C_{\ref{lem:bound-on-wills}} \in (0, \infty)}$ such that 
for any $n \geq 1$ and any $K \in \Bodies$,
\[
\log \Big(\sum_{j=0}^n V_j(\lambda K)\Big) 
\leq C_{\ref{lem:bound-on-wills}} \, \inf_{r > 0} \Big( \lambda \, w(K \cap r B^n_2) + \log N(K, r B^n_2) 
\Big),
\]
for any $\lambda > 0$. 
\end{lemma}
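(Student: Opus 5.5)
The plan is to combine three ingredients: Hadwiger's integral representation of the Wills functional, a covering argument that reduces $\lambda K$ to translates of $\lambda(K\cap rB^n_2)$, and McMullen's inequality to control the Wills functional of the small pieces by their first intrinsic volume, which is a Gaussian width.

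Write $\mathcal{W}(L)=\sum_{j=0}^n V_j(L)$ for the Wills functional of a convex body $L$. The first step is Hadwiger's representation
\[
\mathcal{W}(L)=\int_{\R^n}\e^{-\pi\, d(x,L)^2}\,\ud x .
\]
From it I read off three properties. First, $\mathcal{W}$ is translation invariant. Second, it is monotone under inclusion. Third, it is subadditive under finite unions: for compact convex $A_1,\dots,A_N$,
\[
\e^{-\pi d(x,\cup_i A_i)^2}=\max_i \e^{-\pi d(x,A_i)^2}\le \sum_i \e^{-\pi d(x,A_i)^2},
\]
and integrating this pointwise bound gives the subadditivity. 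Since the union of convex sets need not be convex, I will simply use the integral formula itself as the definition of $\mathcal{W}$ on such unions. This is harmless, because we only compare with the left-hand side through the inclusion $\lambda K\subset \cup_i(\cdots)$, and monotonicity of the integral does not require convexity.

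The second step is the covering. Fix $r>0$ and let $x_1,\dots,x_N\in K$ with $N=N(K,rB^n_2)$ and $K\subset\bigcup_i (x_i+rB^n_2)$. Then
\[
K=\bigcup_i \bigl(K\cap(x_i+rB^n_2)\bigr),
\]
and by central symmetry and convexity
\[
K\cap(x_i+rB^n_2)-x_i\subset (K-K)\cap rB^n_2=2K\cap rB^n_2\subset 2\,(K\cap rB^n_2).
\]
Scaling by $\lambda$ and using monotonicity, translation invariance and subadditivity, I get
\[
\mathcal{W}(\lambda K)\le N(K,rB^n_2)\,\mathcal{W}\bigl(2\lambda (K\cap rB^n_2)\bigr).
\]

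The third step bounds the Wills functional of a single piece. I will use McMullen's inequality $V_j(L)\le V_1(L)^j/j!$, which follows from the Aleksandrov--Fenchel inequalities in the normalization where $V_0=1$. Summing over $j$ gives $\mathcal{W}(L)\le \e^{V_1(L)}$. The first intrinsic volume is proportional to the mean width: with the normalization of $V_j$ above, $V_1(L)=\sqrt{2\pi}\,w(L)$.

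Combining the three steps,
\[
\log\mathcal{W}(\lambda K)\le \log N(K,rB^n_2)+2\sqrt{2\pi}\,\lambda\, w(K\cap rB^n_2).
\]
Taking the infimum over $r>0$ yields the lemma with $C_{\ref{lem:bound-on-wills}}=2\sqrt{2\pi}$.

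The main obstacle is bookkeeping rather than substance: the constants in the identity $V_1=\sqrt{2\pi}\,w$ must be checked carefully against the definition of $V_j$ used here. A mismatch would only change the universal constant.
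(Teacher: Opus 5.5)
Your argument is correct and complete. Hadwiger's formula $\sum_j V_j(L)=\int_{\R^n}\e^{-\pi d(x,L)^2}\,\ud x$ holds for the Kubota-normalized intrinsic volumes used in the paper. The pointwise bound $\max_i\le\sum_i$ gives subadditivity over the covering, and monotonicity of the integral needs no convexity of the union. McMullen's inequality (ultra-log-concavity of $(V_j)$, due to Chevet and McMullen) gives $\sum_j V_j(L)\le \e^{V_1(L)}$. Finally, $V_1(L)=\sqrt{2\pi}\,w(L)$ is the right normalization; for example, both sides equal $1$ for $L=[0,1]\subset\R$. So you obtain the lemma with the explicit constant $C=2\sqrt{2\pi}$.

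The paper gives no proof of its own. It imports the statement from \cite[Corollary 4.3 and Lemma 8.2]{Mou25}. As the proof of Lemma~\ref{lem:one-step-bound} indicates, the role of \cite[Lemma 8.2]{Mou25} is recentering: for symmetric $K$, the local width $w(K\cap(x+rB^n_2))$ is maximized at $x=0$. This holds because $\tfrac12 A+\tfrac12(-A)\subset K\cap rB^n_2$ for $A=K\cap(x+rB^n_2)$. Your inclusion $K\cap(x_i+rB^n_2)-x_i\subset 2K\cap rB^n_2\subset 2(K\cap rB^n_2)$ does the same job directly, at the cost of a harmless factor $2$.

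Compared with the citation, your route gives a short, self-contained and elementary proof with an explicit constant. The citation buys only brevity. Nothing beyond your three ingredients is needed for the way the lemma is used in the proof of Proposition~\ref{prop:main-negative-moment}.
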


The next lemma controls the negative $p$th spherical mean width in terms of the intrinsic volumes.

\begin{lemma}
\label{lem:from-negative-moments-to-intrinsic-volumes}
There are constants $C_{\ref{lem:from-negative-moments-to-intrinsic-volumes}}, C_{\ref{lem:from-negative-moments-to-intrinsic-volumes}}'\in (0, \infty)$ such that for every $n\geq 1$
and every $K\in \Bodies$ and every integer $p \in \{1, 2, \dots, n\}$, 
\[
\sqrt{n} \, M^\ast_{-p}(K)
\leq 
C_{\ref{lem:from-negative-moments-to-intrinsic-volumes}}' 
\, 
\sqrt{n} \, W_{[p]}(K) \leq 
C_{\ref{lem:from-negative-moments-to-intrinsic-volumes}}
\, p \, V_p(K)^{1/p}.
\]
\end{lemma}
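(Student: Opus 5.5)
The plan is to prove the two inequalities separately. The first is a duality argument on each $p$-dimensional subspace followed by Jensen's inequality over the Grassmannian. The second is an explicit comparison of normalizing constants using the Kubota-type formula for $V_p$ recalled above.

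\emph{First inequality.} Fix $F\in\Gr_{n,p}$ and let $L=P_FK$, a symmetric convex body in $F$.
\begin{itemize}
\item By duality, $L^\circ$ (polar taken inside $F$) equals $K^\circ\cap F$.
\item Integrating in polar coordinates inside $F$, and using $\|\theta\|_{K^\circ}=h_K(\theta)$,
\[
\frac{\vol_p(K^\circ\cap F)}{\vol_p(B_2^p)}=\int_{S_F}h_K(\theta)^{-p}\,\ud\sigma_F(\theta).
\]
\item The Blaschke--Santal\'o inequality in $F$ gives $\vol_p(L)\vol_p(L^\circ)\leq\vol_p(B_2^p)^2$. Hence
\[
\frac{\vol_p(P_FK)}{\vol_p(B_2^p)}\geq\Big(\int_{S_F}h_K^{-p}\,\ud\sigma_F\Big)^{-1}.
\]
\item Integrate over $F$ with respect to $\nu_{n,p}$. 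Since $x\mapsto 1/x$ is convex, Jensen's inequality gives
\[
W_{[p]}(K)^p\geq\Big(\int_{\Gr_{n,p}}\int_{S_F}h_K^{-p}\,\ud\sigma_F\,\ud\nu_{n,p}(F)\Big)^{-1}.
\]
\item By rotation invariance, the double integral equals $\int_{\sphere}h_K^{-p}\,\ud\sigma_{n-1}=M^\ast_{-p}(K)^{-p}$.
\end{itemize}
Therefore $M^\ast_{-p}(K)\leq W_{[p]}(K)$, which is the first inequality with $C'=1$.

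\emph{Second inequality.} Compare the displayed formula for $V_p$ with definition~\eqref{def-W-k}. This gives
\[
V_p(K)=\binom{n}{p}\frac{\vol_n(B_2^n)}{\vol_{n-p}(B_2^{n-p})}\,W_{[p]}(K)^p .
\]
So it suffices to show
\[
p\Big(\binom{n}{p}\frac{\vol_n(B_2^n)}{\vol_{n-p}(B_2^{n-p})}\Big)^{1/p}\geq c\sqrt n .
\]
\begin{itemize}
\item The binomial factor satisfies $\binom{n}{p}^{1/p}\geq n/p$.
\item The volume ratio is
\[
\frac{\vol_n(B_2^n)}{\vol_{n-p}(B_2^{n-p})}=\pi^{p/2}\frac{\Gamma(\frac{n-p}{2}+1)}{\Gamma(\frac n2+1)}.
\]
\item Each of the roughly $p/2$ factors separating these Gamma values is at most of order $n$. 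The exact count uses log-convexity of $\Gamma$ when $p$ is odd. Hence this ratio is at least $(c/n)^{p/2}$, and its $p$th root is at least $c/\sqrt n$.
\end{itemize}
Multiplying, $p\cdot\frac np\cdot\frac{c}{\sqrt n}=c\sqrt n$, as required.

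Neither step is deep. The only point needing care is the Gamma-function ratio for odd $p$ and small $n-p$, where log-convexity of $\Gamma$ (or Gautschi's inequality) gives the uniform bound.
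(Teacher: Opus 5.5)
Your treatment of the second inequality matches the paper's: you identify $V_p(K)=\binom{n}{p}\frac{\mathrm{vol}_n(B_2^n)}{\mathrm{vol}_{n-p}(B_2^{n-p})}W_{[p]}(K)^p$ and bound the Gamma ratio, and that part is fine. The first inequality, however, contains a genuine error. With $L=P_FK$ and $L^\circ=K^\circ\cap F$, the Blaschke--Santal\'o inequality $\mathrm{vol}_p(L)\,\mathrm{vol}_p(L^\circ)\le \mathrm{vol}_p(B_2^p)^2$ gives
\[
\frac{\mathrm{vol}_p(P_FK)}{\mathrm{vol}_p(B_2^p)}\;\le\;\Big(\int_{S_F}h_K^{-p}\,\ud\sigma_F\Big)^{-1},
\]
not $\ge$. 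So Santal\'o gives an \emph{upper} bound on $\int_{S_F}h_K^{-p}\,\ud\sigma_F$. To bound $M^\ast_{-p}(K)=(\E_{\sigma_{n-1}}h_K^{-p})^{-1/p}$ from above you need a \emph{lower} bound on that integral, and your ``hence'' reverses the inequality. The conclusion $C'=1$ is in fact false. At $p=n$ one has $M^\ast_{-n}(K)=\mathrm{vr}(K^\circ)^{-1}$ and $W_{[n]}(K)=\mathrm{vr}(K)$. Then $M^\ast_{-n}(K)\le W_{[n]}(K)$ would say $\mathrm{vol}_n(K)\,\mathrm{vol}_n(K^\circ)\ge \mathrm{vol}_n(B_2^n)^2$, contradicting the strict Santal\'o inequality for every non-ellipsoid. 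For example, $K=[-1,1]^2$ gives $M^\ast_{-2}(K)=\sqrt{\pi/2}>2/\sqrt{\pi}=W_{[2]}(K)$.

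The missing ingredient is the \emph{reverse} Santal\'o inequality of Bourgain--Milman, applied in each $F\in\Gr_{n,p}$: $\mathrm{vol}_p(P_FK)\,\mathrm{vol}_p(K^\circ\cap F)\ge c^p\,\mathrm{vol}_p(B_2^p)^2$ with a universal $c\in(0,1)$. Writing $X_F=\mathrm{vol}_p(P_FK)/\mathrm{vol}_p(B_2^p)$, this gives $\int_{S_F}h_K^{-p}\,\ud\sigma_F\ge c^p X_F^{-1}$. Average over $F$ and apply Jensen to $x\mapsto 1/x$, now to $X_F$ rather than to the section volumes. This yields $M^\ast_{-p}(K)^{-p}\ge c^p(\E_F X_F)^{-1}=c^pW_{[p]}(K)^{-p}$, i.e.\ $M^\ast_{-p}(K)\le c^{-1}W_{[p]}(K)$. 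This is exactly the paper's argument. Contrary to your closing remark, this step rests on a deep theorem, and $C'$ is the reciprocal of the Bourgain--Milman constant rather than $1$.
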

\begin{proof}
Let $L \in \Bodies$ and note that 
\[
\E_{\sigma_{n-1}} \|\theta\|_{L}^{-p} = \int_{\sphere} \rho_L(\theta)^p \, \ud \sigma_{n-1}(\theta). 
\] 
For a $p$-dimensional subspace $E$, 
\[
\frac{\vol_p(L \cap E)}{\vol_p(B^p_2)} = \int_{\mathbf{S}(E)}  \rho_L(\theta)^p \, \ud \sigma_E(\theta), 
\]
where $\mathbf{S}(E)$ denotes the sphere in $E$ and
$\sigma_E$ denotes the corresponding uniform measure.
Combining the two displays gives
\[
\E_{\sigma_{n-1}} \|\theta\|_{L}^{-p}
= \E_E  \int_{\mathbf{S}(E)} \rho_L(\theta)^p \, \ud \sigma_E(\theta) = 
\E_E \frac{\vol_p(L \cap E)}{\vol_p(B^p_2)}. 
\]
We can then apply the reverse Blaschke--Santal\'o inequality:
\[
\vol_p(L \cap E) \vol_p((L \cap E)^\circ)\geq c^p \vol_p(B^p_2)^2. 
\]
By duality of projections and sections and Jensen's inequality, this yields
\[
\E_{\sigma_{n-1}} \|\theta\|_{L}^{-p} 
\geq c^p \E \frac{\vol_p(B^p_2)}{\vol_p(P_E L^\circ)}
\geq 
c^p 
\Big(\E \frac{\vol_p(P_E L^\circ)}{\vol_p(B^p_2)}\Big)^{-1}.
\]
Set
\[
C_{n,p} = \Big(\frac{\vol_{n-p}(B^{n-p}_2)}{\vol_n(B^n_2) \binom{n}{p}}\Big)^{1/p}.
\]
Taking $L^\circ = K$, we have 
\begin{equation}
\label{ineq:upper-for-neg-p-moment}    
M^\ast_{-p}(K) \leq \frac{1}{c} W_{[p]}(K) =  \frac{1}{c} \frac{1}{\vol_p(B^p_2)^{1/p}} 
\Big(\E \vol_p(P_EK) \Big)^{1/p}
= \frac{C_{n,p}}{c} \, V_p(K)^{1/p}.
\end{equation}
Recall the formula 
\[
\vol_m(B^m_2) = \frac{\pi^{m/2}}{\Gamma(\tfrac{m}{2} + 1)}.
\]
Using $\binom{n}{p} \geq (\tfrac{n}{p})^p$ and $\Gamma(x+a) \leq (x+a)^a \Gamma(x)$ for $x, a >0$,  we have 
\[
C_{n,p} = 
\frac{1}{\sqrt{\pi}} \Big(\frac{\Gamma(\tfrac{n}{2} + 1)}{\Gamma(\tfrac{n-p}{2} + 1)}\Big)^{1/p} \binom{n}{p}^{-1/p}  \leq 
\frac{1}{\sqrt{\pi}} \frac{p}{n} \sqrt{\frac{n}{2} + 1} \leq 2\frac{p}{\sqrt{\pi n}}.
\]
Combining this with inequality~\eqref{ineq:upper-for-neg-p-moment} and then rearranging yields the claim.
\end{proof}

We are now in a position to establish~\Cref{prop:main-negative-moment}. By adjusting constants, we may assume that $n$ is large. Combining \Cref{lem:interpolation-body-bounds,lem:bound-on-wills}, we obtain
\begin{equation}
\label{ineq:general-bound}
\log \Big(\sum_{j=0}^n V_j(\lambda K)\Big) 
\leq C_{\ref{lem:bound-on-wills}}
\Big(
C_{\ref{lem:interpolation-body-bounds}}
\lambda \sqrt{\log(\e n/m)}
+C_{\ref{lem:interpolation-body-bounds}}
\lambda r \sqrt{m}
+H_K(r)
\Big),
\end{equation}
for every $\lambda, r > 0$ and $m \in \{1, \dots, n\}$. Define
\[
m^\star_p = p, \quad 
\lambda^\star_p = \frac{p}{\sqrt{\log(\e n/p)}}, 
\quad \mbox{and} \quad r^\star_p = \sqrt{\frac{\log(\e n/p)}{p}}.
\]
Assuming $r^\star_p \leq \rad(K)$ (otherwise, there is nothing to prove), \Cref{thm:metric-entropy-bobkov} gives
\begin{equation}
\label{ineq:entropy-bound}
H_K(r^\star_p) \leq C_{\ref{thm:metric-entropy-bobkov}}
\frac{p}{\log(\e n/p)} \Big\{\log\frac{\e n}{p} + \log \log \frac{\e n}{p}\Big\} 
\leq 2C_{\ref{thm:metric-entropy-bobkov}}p.
\end{equation}
Then, taking $(\lambda, r, m) = (\lambda^\star_p, r^\star_p, m^\star_p)$ in the bound~\eqref{ineq:general-bound} yields
\[
V_p(\lambda^\star_p K)
\leq \sum_{j=0}^n V_j(\lambda^\star_p K)
\leq
\exp\left(
2C_{\ref{lem:bound-on-wills}}
\big(
C_{\ref{lem:interpolation-body-bounds}}
+C_{\ref{thm:metric-entropy-bobkov}}
\big)p
\right).
\]
Since $V_p$ is positively $p$-homogeneous, taking the $p$th root and rearranging gives
\[
V_p(K)^{1/p}
\leq
\exp\left(
2C_{\ref{lem:bound-on-wills}}
\big(
C_{\ref{lem:interpolation-body-bounds}}
+C_{\ref{thm:metric-entropy-bobkov}}
\big)
\right)
\frac{\sqrt{\log(\e n/p)}}{p}.
\]
Therefore, by 
\Cref{lem:from-negative-moments-to-intrinsic-volumes}, we have 
\[
\sqrt{n} \, M^\ast_{-p}(K) \leq C_{\ref{lem:from-negative-moments-to-intrinsic-volumes}} \, p \,  V_p(K)^{1/p}  \leq
C_{\ref{lem:from-negative-moments-to-intrinsic-volumes}}
\exp\left(
2C_{\ref{lem:bound-on-wills}}
\big(
C_{\ref{lem:interpolation-body-bounds}}
+C_{\ref{thm:metric-entropy-bobkov}}
\big)
\right)
\sqrt{\log(\e n/p)},
\]
as needed. 

\bibliographystyle{abbrv}
\bibliography{references}

\end{document}